\DeclareFontFamily{OT1}{pzc}{}
\DeclareFontShape{OT1}{pzc}{m}{it}{<-> s * [1.100] pzcmi7t}{}
\DeclareMathAlphabet{\mathpzc}{OT1}{pzc}{m}{it}
\begin{document}
\newtheorem{remark}[theorem]{Remark}

%Nonequilibrium thermodynamics of discrete and continuum systems: geometry and variational principles

\title{Variational discretization of the nonequilibrium thermodynamics of simple systems}
%{\large Part I: Variational structures of nonholonomic irreversible systems}
\vspace{-0.2in}

%%% TodoFGB
\newcommand{\todoFGB}[1]{\vspace{5 mm}\par \noindent
\framebox{\begin{minipage}[c]{0.95 \textwidth} \color{red}FGB: \tt #1
\end{minipage}}\vspace{5 mm}\par}
%%%

%\author{ Fran\c{c}ois Gay-Balmaz and
%Hiroaki Yoshimura 
%\date{Sep 27, 2010}
%\date{Jan 9, 2011}
%\date{July 10, 2011}
%}

\author{\hspace{-1cm}
\begin{tabular}{cc}
Fran\c{c}ois Gay-Balmaz &
Hiroaki Yoshimura
\\ CNRS - LMD - IPSL  & School of Science and Engineering
\\ Ecole Normale Sup\'erieure de Paris & Waseda University
\\  24 Rue Lhomond 75005 Paris, France & Okubo, Shinjuku, Tokyo 169-8555, Japan \\ francois.gay-balmaz@lmd.ens.fr & yoshimura@waseda.jp\\
\end{tabular}\\\\
}

\maketitle
\vspace{-0.3in}

\begin{center}
\abstract{In this paper, we develop variational integrators for the nonequilibrium thermodynamics of simple closed systems. These integrators are obtained by a discretization of the Lagrangian variational formulation of nonequilibrium thermodynamics developed in \cite{GBYo2016a}, and thus extend the variational integrators of Lagrangian mechanics, to include irreversible processes. In the continuous setting, we derive the structure preserving property of the flow of such systems. This property is an extension of the symplectic property of the flow of the Euler-Lagrange equations. In the discrete setting, we show that the discrete flow solution of our numerical scheme verifies a discrete version of this property. We also present the regularity conditions which ensure the existence of the discrete flow. We finally illustrate our discrete variational schemes with the implementation of an example of a simple and closed system.}
\vspace{2mm}
\end{center}
\tableofcontents

\section{Introduction}

Nonequilibrium thermodynamics is a phenomenological theory which aims to identify and describe the relations among the observed {\it macroscopic} properties of a physical system and to determine the {\it macroscopic dynamics} with the help of fundamental laws of thermodynamics (e.g. \cite{StSc1974}). The field of nonequilibrium thermodynamics naturally includes macroscopic disciplines such as classical mechanics, fluid dynamics, elasticity, and electromagnetism.

It is well known that the equations of motion of classical mechanics, i.e., the Euler-Lagrange equations can be derived from \textit{Hamilton's variational principle} applied to the action functional associated to the Lagrangian of the mechanical system. One of the many features of the variational formulation is that it admits a discrete version which allows the derivation of structure preserving numerical schemes for the system. Such schemes, called \textit{variational integrators}, (see, \cite{WeMa1997}, \cite{MaWe2001}, \cite{LeMaOrWe2004}) are obtained via a discrete version of Hamilton's principle and are originally based on Moser-Veselov discretizations (see, \cite{Ve1988}, \cite{Ve1991}, \cite{MoVe1991}).
Several extensions of this method have been developed, for example to treat the case of forced mechanical systems (\cite{KaMaOrWe2000}) or nonholonomic mechanical systems (\cite{CoMa2001}, \cite{MLPe2006}).

In \cite{GBYo2016a,GBYo2016b}, we have developed a Lagrangian variational formulation for nonequilibrium thermodynamics which extends Hamilton's principle of classical mechanics by allowing the inclusion of irreversible phenomena in both discrete and continuum systems, i.e., systems with finite and infinite degrees of freedom.
The irreversibility is encoded into a nonlinear nonholonomic constraint given by the expression of the entropy production associated to all the irreversible processes involved.
From a mathematical point of view, the variational formulation of \cite{GBYo2016a,GBYo2016b} may be regarded as a nonlinear generalization of the Lagrange-d'Alembert principle used in nonholonomic mechanics, see e.g., \cite{Bl2003}. In order to formulate the nonholonomic constraint, to each irreversible process is associated a variable called the \textit{thermodynamic displacement} that generalizes the thermal displacement introduced in  \cite{GrNa1991}, following  \cite{He1884}. The introduction of such variables allows the definition of a corresponding variational constraint.

In the present paper, we develop variational integrators for nonequilibrium thermodynamics by discretizing the Lagrangian variational formulation developed in \cite{GBYo2016a}. The resulting numerical schemes are thus extensions of the variational integrators of Lagrangian mechanics that enable to include irreversible phenomena. In the present paper, we restrict our discussions to the case of simple closed systems, i.e., closed systems in which one thermal scalar variable and a finite set of mechanical variables are sufficient to describe entirely the state of the system, though we will be able to develop our discrete theory to handle more general cases including the nonequilibrium thermodynamics of continuum systems.

A key property of variational integrators in Lagrangian mechanics is their symplecticity, meaning that the discrete flow, similarly to the flow of the continuous system, preserves a symplectic form. This ensures an excellent  
long-time energy behavior, see \cite{HaLuWa2006}. When irreversible effects are considered in the dynamics, the symplecticity of the flow may be lost at the continuous level, so there is no hope to discretize the system with a symplectic integrator, in general. In the paper, we shall present a property of the flow $F_t$ of simple closed systems in thermodynamics, which reduces to the symplecticity of the flow in absence of thermal effects.  This property has the form
\[
F_t ^\ast \Omega - \Omega = - \mathbf{d} \int_0^t (F_s^\ast \omega) ds, \quad\text{for all $t$},
\]
where $ \Omega $ is an entropy-dependent symplectic form, $\omega $ is a one-form encoding the effects of friction and temperature, and $ \mathbf{d} $ is the exterior derivative. We then show that our numerical scheme verifies a discrete version of this formula and  therefore it reduces to a symplectic integrator in absence of thermal effects.

The paper is organized as follows. In Section \ref{Section_2} we review the fundamental laws governing the nonequilibrium thermodynamics of macroscopic systems by following the axiomatic formulation of \cite{StSc1974}. Then, we also review the Lagrangian variational formulation of nonequilibrium thermodynamics developed in \cite{GBYo2016a}, which is an extension of Hamilton's principle of classical mechanics that allows the inclusion of irreversible phenomena.
In Section \ref{Section_3}, after recalling some basic facts about variational integrators in Lagrangian mechanics, we propose a discrete version of the variational formulation for nonequilibrium thermodynamics of simple closed systems and deduce a variational integrator for these systems. In Section \ref{Section_4}, we present a property of the flow $F_t$ of simple closed systems in thermodynamics, which reduces to the symplecticity of the flow in absence of thermal effects. Then we show that the discrete flow of our variational integrator verifies a discrete version of this property. We also study the regularity conditions which ensure the existence of the discrete flow. Finally, in Section \ref{Section_5}, we illustrate the implementation of our integrator with an example of a simple system.

\section{Nonequilibrium thermodynamics of simple systems}\label{Section_2}

In this section we first review the fundamental laws governing the nonequilibrium thermodynamics of macroscopic systems.
We follow the axiomatic formulation of thermodynamics developed by Stueckelberg
around 1960 (see, for instance, \cite{StSc1974}), which is well suited for the study of nonequilibrium thermodynamics as a general macroscopic dynamic theory that extends classical mechanics to account for irreversible processes.
Needless to say, it is important to point out that this axiomatic formulation includes the description of systems \textit{out of equilibrium} and is not restricted to the treatment of equilibrium states and transition from one equilibrium state to another. Then, we review the Lagrangian variational formulation of nonequilibrium thermodynamics  from \cite{GBYo2016a}  which is an extension of Hamilton's principle of classical mechanics to allow the inclusion of irreversible phenomena. For brevity, in this paper, we will restrict to the case of simple and closed systems.

\subsection{Fundamental laws of nonequilibrium thermodynamics} 
For the macroscopic description of nonequilibrium thermodynamics, we have the following laws, see \cite{StSc1974}:
\begin{itemize}
\item[(I)]  
\noindent {\bf First law:} For every system, there exists an extensive scalar state function $E$, called {\bfi energy}, which satisfies
\[
\frac{d}{dt} E(t) = P^{\rm ext}_W(t)+P^{\rm ext}_H(t)+P^{\rm ext}_M(t),
\]
where $t$ denotes {\it time}, $ P^{\rm ext}_W(t)$ is the {\it power due to external forces} acting on the mechanical variables of the system, $P^{\rm ext}_H(t)$ is the {\it power due to heat transfer}, and $P^{\rm ext}_M(t)$ is the {\it power due to matter transfer} between the system and the exterior.

\item[(II)] 
\noindent {\bf Second law:} For every system, there exists an extensive scalar state function $S$, called {\bfi entropy}, which obeys the following two conditions.
\begin{itemize}
\item[(a)]  Evolution part:\\
If the system is adiabatically closed, the entropy $S$ is a non-decreasing function with respect to time, i.e., 
\[
\frac{d}{dt} S(t)=I(t)\geq 0,
\]
where $I(t)$ is the {\it entropy production rate} of the system accounting for the irreversibility of internal processes.
\item[(b)] Equilibrium part:\\
If the system is isolated, as time tends to infinity the entropy tends towards a finite local maximum of the function $S$ over all the thermodynamic states $ \rho $ compatible with the system, i.e., 
\[
\lim_{t \rightarrow +\infty}S(t)= \max_{ \rho \; \text{compatible}}S[\rho ].
\]
\end{itemize}
\end{itemize}

In this context, a system is said to be {\bfi closed} if there is no exchange of matter between the system and the exterior, i.e.,  $P^{\rm ext}_M(t)=0$;  a system is said to be {\bfi adiabatically closed} if it is closed and there is no heat exchanges between the system and the exterior, i.e., $P^{\rm ext}_M(t)=P^{\rm ext}_H(t)=0$; and a system is said to be {\bfi isolated} if it is adiabatically closed and there is no mechanical power exchange between the system and the exterior, i.e., $P^{\rm ext}_M(t)=P^{\rm ext}_H(t)=P^{\rm ext}_W(t)=0$.
\medskip

By definition, the evolution of an isolated system is said to be {\bfi reversible} if $I(t) = 0$, namely, the entropy is constant. In general, the evolution of a system is said to be \textit{reversible}, if the evolution of the total isolated system with which it interacts is reversible.

\medskip

In this paper, we only consider simple and closed systems. By definition, a {\bfi simple system}$^{1}$ is a system where one (scalar) thermal variable $S$ and a finite set $(q^{i}, \dot{q}^{i})$ of mechanical variables are sufficient to describe entirely the state of the system, and we assume that there is  {\it no power due to matter transfer} $P^{\rm ext}_M$ between the system and the exterior since the system is closed.

\addtocounter{footnote}{1}
\footnotetext{In \cite{StSc1974} they are called \textit{\'el\'ement de syst\`eme} (French). We choose to use the English terminology \textit{simple system} instead of \textit{system element}. See also \cite{GBYo2016a}.}

\subsection{Variational formulation for nonequilibrium thermodynamics}
Consider a simple closed system described by a mechanical variable $ q \in Q$ and one entropy variable $S \in \mathbb{R}  $. Let $L=L(q, \dot q, S): TQ \times \mathbb{R}  \rightarrow \mathbb{R}$ be the Lagrangian of the system, $F^{\rm ext}:TQ\times \mathbb{R}  \rightarrow T^*Q$  the external force, $F^{\rm fr}:TQ\times \mathbb{R}  \rightarrow T^*Q$ the friction force, and $P^{\rm ext}_H$ the power due to heat transfer between the system and the exterior. The forces are fiber preserving maps, i.e., $F^{\rm fr}(q, \dot q, S), F^{\rm ext}(q, \dot q, S) \in T^*_qQ$, where $T^*_qQ$ denotes the cotangent space to $Q$ at $q$.

A common form for the Lagrangian is
\begin{equation*}\label{special_form}
L(q  , \dot q  , S):=K_{\rm mech}( q  , \dot q )-U(q , S),
\end{equation*} 
where $K_{\rm mech}:TQ \rightarrow \mathbb{R}  $ denotes the kinetic energy of the mechanical part of the system (assumed to be independent of $S$) and $U: Q \times \mathbb{R}  \rightarrow \mathbb{R}$ denotes the {\it potential energy}, which is a function of both the mechanical variable $q$ and the entropy $S$.

\medskip

The variational formulation for the thermodynamics of simple closed systems is defined as follows; see Def. 2.1 in \cite{GBYo2016a}.
\medskip

A curve $(q(t),S(t)) \in Q \times \mathbb{R}$, $t \in [0, T] \subset \mathbb{R}$ is a {\it solution of the variational formulation} if
it satisfies the variational condition 
\begin{equation}\label{LdA_thermo} 
\delta \int_{0}^{T}L(q , \dot q , S)dt +\int_{0}^{T}\left\langle F^{\rm ext}(q, \dot q, S), \delta q\right\rangle dt =0, \quad\;\;\; \textsc{Variational Condition}
\end{equation}
for admissible variations $ \delta q(t) $ and $\delta S(t)$  subject to the constraint
\begin{equation}\label{Virtual_Constraints} 
\frac{\partial L}{\partial S}(q, \dot q, S)\delta S= \left\langle F^{\rm fr}(q , \dot q , S),\delta q \right\rangle,\qquad\qquad\, \textsc{Variational Constraint}
\end{equation}
and if it satisfies the nonlinear nonholonomic constraint 
\begin{equation}\label{Kinematic_Constraints} 
\frac{\partial L}{\partial S}(q, \dot q, S)\dot S  = \left\langle F^{\rm fr}(q, \dot q, S) , \dot q \right\rangle -  P_H^{\rm ext}. \quad \textsc{Phenomenological Constraint}
\end{equation} 

\medskip

Taking variations of the integral in \eqref{LdA_thermo}, integrating by part and using $ \delta q(0)= \delta q(T)=0$, it follows
\[
\int_{0}^{T}\left(  \left\langle  \frac{\partial L}{\partial q}- \frac{d}{dt} \frac{\partial L}{\partial \dot q} +F^{\rm ext}, \delta q \right\rangle + \frac{\partial L}{\partial S}\delta S \right) dt=0,
\]
where the variations $ \delta q $ and $ \delta S$ have to satisfy the variational constraint \eqref{Virtual_Constraints}. Now, replacing $ \frac{\partial L}{\partial S}\delta S $ by the virtual work expression $\left\langle F^{\rm fr}(q, \dot q, S), \delta q \right\rangle $ according to \eqref{Virtual_Constraints} and using the phenomenological constraint, the curve $(q(t),S(t))$ satisfies the following evolution equations for the thermodynamics of the simple closed system
\begin{equation}\label{thermo_mech_equations}
\left\{ 
\begin{array}{l}
\displaystyle\vspace{0.2cm}\frac{d}{dt} \frac{\partial L}{\partial \dot q}- \frac{\partial L}{\partial q}= F^{\rm ext}(q , \dot q, S) +  F^{\rm fr}(q, \dot q, S),\\
\displaystyle\frac{\partial L}{\partial S}\dot S= \left\langle F^{\rm fr} (q , \dot q, S) , \dot q\right\rangle  -  P_H^{\rm ext}.
\end{array} 
\right.
\end{equation} 
Notice that the explicit expression of the constraint \eqref{Kinematic_Constraints}  involves phenomenological laws for the friction force $ F^{\rm fr}$; this is the reason why we refer to it as a {\it phenomenological constraint}. The constraint \eqref{Virtual_Constraints} is called a {\it variational constraint}  since it is a condition on the variations to be used in \eqref{LdA_thermo}, which follows from \eqref{Kinematic_Constraints} by formally replacing the velocity by the corresponding virtual displacement, and by removing the contribution from the exterior of the system. Such a simple correspondence between the phenomenological and variational constraints still holds for the general class of thermodynamical systems considered in \cite{GBYo2016a,GBYo2016b,GB2017}. 

\paragraph{Energy balance law.} The energy associated with $L:TQ \times \mathbb{R}  \rightarrow \mathbb{R}  $ is the function $E:TQ \times \mathbb{R}  \rightarrow \mathbb{R}  $ defined by $E(q, \dot q, S):= \left\langle \frac{\partial L}{\partial \dot q}, \dot q \right\rangle -L(q, \dot q, S)$.
Using the system \eqref{thermo_mech_equations} and defining $P_W^{\rm ext}:= \left\langle F^{\rm ext}, \dot q \right\rangle $, we obtain the {\it energy balance law}\,:
\begin{equation*}\label{ddt_E} 
\frac{d}{dt} E= \left\langle \frac{d}{dt} \frac{\partial L}{\partial \dot q}- \frac{\partial L}{\partial q} , \dot q \right\rangle - \frac{\partial L}{\partial S}\dot S=P_W^{\rm ext}+P_H^{\rm ext},
\end{equation*} 
which is consistent with the first law of thermodynamics.
Notice that {\it energy is preserved when the system is isolated}, i.e., when $P^{\rm ext}_W=P^{\rm ext}_H=P^{\rm ext}_M=0$, consistently with the first law of thermodynamics.

\paragraph{Entropy production.} The temperature is given by minus the partial derivative of the Lagrangian with respect to the entropy, $T=- \frac{\partial L}{\partial S}$, which is assumed to be positive. So the second equation in \eqref{thermo_mech_equations} reads
\[
T \dot S= P_H^{\rm ext}- \left\langle F^{\rm fr}(q, \dot q, S), \dot q\right\rangle .
\]
According to the second law of thermodynamics, for adiabatically closed systems, i.e., when $P^{\rm ext}_H=P^{\rm ext}_M=0$, entropy is increasing. So the friction force $F^{\rm fr}$ must be dissipative, that is $\left\langle F^{\rm fr}(q, \dot q, S), \dot q \right\rangle \leq 0$, for all $(q, \dot q, S) \in TQ \times \mathbb{R}  $. For the case in which the force is linear in velocity, i.e., $F^{\rm fr}(q, \dot q, S)=- \lambda  (q,S) ( \dot q,\_\,)$, where $ \lambda  (q,S):T_qQ  \times T_qQ \rightarrow \mathbb{R}  $ is a {\it two covariant tensor field}, this implies that the symmetric part $ \lambda ^{\rm sym}$ of $ \lambda $ has to be positive. For a simple system, the internal entropy production has the form
\[
I(t)= -\frac{1}{T}\left\langle F^{\rm fr}(q, \dot q, S), \dot q \right\rangle.
\]

\paragraph{Recovering Hamilton's principle.} In absence of the entropy variable and the external force, the constraints  disappear and hence the variational formulation given in equations \eqref{LdA_thermo}--\eqref{Kinematic_Constraints} reduces to Hamilton's principle of Lagrangian mechanics
\begin{equation}\label{HP} 
\delta \int_{0}^{T}L(q , \dot q  )dt =0,
\end{equation} 
for variations $\delta q(t)$ 
vanishing at the endpoints, i.e., $\delta q(0)=\delta q(T)=0$.

\paragraph{Reversibility.} As we recalled earlier, the evolution of an isolated system is said to be reversible if the entropy is constant. In the case of an isolated simple system, in view of the second equation in \eqref{thermo_mech_equations} this means that the evolution is such that 
\begin{equation*}\label{reversibility} 
\left\langle F^{\rm fr}(q(t), \dot q(t), S(t)), \dot q(t) \right\rangle =0.
\end{equation*} 

\section{Discretization of the variational formulation}\label{Section_3}

In this section we first make a brief review of some basic facts about variational integrators in Lagrangian mechanics. Then we propose a discrete version of the variational formulation for nonequilibrium thermodynamics of simple closed systems and deduce a variational integrator for these systems. We also present a condition which ensures the existence of the flow of the integrator and we make several comments on the construction of the constraint.

\subsection{Variational integrators in Lagrangian mechanics} 
Variational integrators are numerical schemes that arise from a discrete version of Hamilton's variational principle \eqref{HP}; see, for instance, \cite{WeMa1997} and \cite{MaWe2001}.
Let $Q$ be the configuration manifold of a mechanical system and let $L:TQ \rightarrow \mathbb{R}  $ be a Lagrangian. Suppose that a time step $h$ has been fixed, denote by $\{t_k =kh\mid k=0,...,N\}$ the sequence of times 
discretizing $[0,T]$, and by $q_d:\{t_k\}_{k=0}^N \rightarrow Q$, 
$q_k:=q_d(t_k)$ the corresponding discrete curve. A discrete Lagrangian $L_d: Q \times Q \rightarrow \mathbb{R}  $ is an approximation of the time integral of the continuous Lagrangian between two consecutive configurations $q_k$ and $q_{k+1}$
\begin{equation}\label{L_d}
L_d( q_k , q_{k+1})\approx
\int_{t _k }^{t_{k+1}}L( q(t), \dot q(t)) dt,
\end{equation} 
where  $q_{k}=q(t_k )$ and $q_{k+1}=q(t_{k+1})$. Equipped with such a discrete Lagrangian, one can now formulate a discrete version of Hamilton's principle \eqref{HP} according to
\[
\delta \sum_{k=0}^{N-1}L_d(q _k , q_{k+1})=0,
\]
for variations $ \delta q _k $ vanishing at the endpoints. Thus, if we denote $D _i $ the partial derivative with respect to the $i^{th}$ variable, three consecutive configuration variables $q_{k-1}, q_k , q_{k+1}$ must verify the discrete analogue of the Euler-Lagrange equations:
\begin{equation}\label{DEC} 
D_2L_d(q_{k-1}, q_k)+D_1L_d(q_k, q_{k+1})=0.
\end{equation} 
These \textit{discrete Euler-Lagrange equations} define, under appropriate conditions, an integration scheme which solves for $q_{k+1}$, knowing the two previous configuration variables $q_{k-1}$ and $q_k$.

A discrete Lagrangian $L_d$ is called \textit{regular} if the following maps, called \textit{discrete Legendre transforms}, are local diffeomorphisms: 
\begin{equation}\label{LT_regular}
\begin{aligned} 
&\mathbb{F} ^+L_d: Q \times Q \rightarrow T^*Q, \quad \mathbb{F} ^+L_d( q _0 , q _1 )= (q_1, D_2L_d( q _0 , q _1 )) \in T^*_{q _1 }Q\\
&\mathbb{F} ^-L_d: Q \times Q \rightarrow T^*Q, \quad \mathbb{F} ^+L_d( q _0 , q _1 )= (q_0, -D_1L_d( q _0 , q _1 )) \in T^*_{q _0 }Q.
\end{aligned}
\end{equation} 
In fact it is enough to prove that one of these maps is a local diffeomorphism. This turns out to be equivalent to the invertibility of the matrix $D_1D_2L_d( q _0 , q _1 )$ for all $ q _0 , q _1 $.

Under the regularity hypothesis, the scheme \eqref{DEC} yields a well-defined discrete flow $F_{L_d}: Q\times Q \to Q \times Q;\; (q_{k-1},q_k)\mapsto(q_k, q_{k+1})$ that is symplectic:
\begin{equation}\label{symplectic_integrator} 
F_{L_d} ^\ast \Omega _{L_d}= \Omega _{L_d},
\end{equation} 
where the symplectic form $ \Omega _{L_d}:= (\mathbb{F} ^\pm L_d) ^\ast \Omega _{\rm can}$ is defined with respect to either $ \mathbb{F}  ^+L_d$ or $ \mathbb{F}  L_d ^-$.

\medskip

External forces can be added using a discrete version of the Lagrange-d'Alembert
principle in a similar manner, see \cite{MaWe2001}.

\subsection{Variational integrators for the thermodynamics of simple systems} 
Let us first extend the concept of discrete Lagrangian \eqref{L_d} from mechanics to the nonequilibrium thermodynamics of simple closed systems described by a mechanical variable $ q \in Q$ and one entropy variable $S \in \mathbb{R} $. 

\begin{definition} Consider a simple closed system with Lagrangian $L=L(q, \dot q, S): TQ \times \mathbb{R}  \rightarrow \mathbb{R}$, suppose that a time step $h$ has been fixed, and denote by $\{t_k =kh\mid k=0,...,N\}$ the sequence of times 
discretizing $[0,T]$. A {\bfi discrete Lagrangian} is a function
\[
L_d : ( Q \times Q ) \times ( \mathbb{R}  \times\mathbb{R}  ) \rightarrow \mathbb{R},  
\]
which is an approximation of the time integral of $L$ between two consecutive states $(q_k, S_k)$ and $(q_{k+1}, S_{k+1})$:
\[
L_d( q _k, q_{k+1}, S _k , S_{k+1})\simeq \int_{t_k}^{t_{k+1}}L( q(t), \dot q(t), S(t)) dt,
\]
where $q(t_i)= q_i $ and $S(t_i)=S_i$, for $i=k, k+1$.
\end{definition}

One example of such a discrete Lagrangian, when $Q$ is a vector space, may be given by
\[
L_d( q _k, q_{k+1}, S _k , S_{k+1}):=\frac{h}{2}\left[ L\left(q_{k},\frac{q_{k+1}-q_{k}}{h}, S_{k}\right) +L\left(q_{k+1},\frac{q_{k+1}-q_{k}}{h}, S_{k+1}\right) \right].
\]

Similarly, we define the discrete analogue of external and friction forces as follows.

\begin{definition}
Consider an external force $F^{\rm ext}:TQ\times \mathbb{R}  \rightarrow T^*Q$ and a friction force $F^{\rm fr}:TQ\times \mathbb{R}  \rightarrow T^*Q$, 
which are fiber preserving maps, i.e., $F^{\rm fr}(q, \dot q, S), F^{\rm ext}(q, \dot q, S) \in T^*_qQ$. We define {\bfi discrete friction forces} and {\bfi discrete exterior forces} to be maps
\[
F^{\rm fr -}, F^{\rm fr +}, F^{\rm ext -}, F^{\rm ext +}:  ( Q \times Q ) \times ( \mathbb{R}  \times\mathbb{R}  )  \rightarrow T^*Q,
\]
such that the following approximation holds
\begin{align*} 
&\left\langle F^{\rm fr -}( q _k, q_{k+1}, S _k , S_{k+1}) , \delta q _k \right\rangle + \left\langle F^{\rm fr +}( q _k, q_{k+1}, S _k , S_{k+1}) , \delta q _{k+1}\right\rangle \\
&\hspace{7cm} \simeq \int_{t_k}^{t_{k+1}}\left\langle F^{\rm fr}(q(t), \dot q(t), S(t)) , \delta q(t) \right\rangle,
\end{align*} 
similarly for $F^{\rm ext \pm}$, $F^{\rm ext}$,
where $q(t_i)= q_i $, $S(t_i)=S_i$, $\delta q(t_i)= \delta q_i $, and $\delta S(t_i)= \delta S_i$, for $i=k, k+1$.

These discrete forces are required to be fiber preserving in the sense that
\[
\pi _Q \circ F^{\rm fr \pm}= \pi _{Q}^\pm, \quad \pi _Q \circ F^{\rm ext \pm}= \pi _{Q}^\pm,
\]
where $\pi _Q: T^*Q \rightarrow Q$ is the cotangent bundle projection and  $ \pi _Q^-, \pi _Q^+:( Q \times Q ) \times ( \mathbb{R}  \times\mathbb{R}  ) \rightarrow Q$ are defined by $ \pi _Q ^-(q_0,q_1, S_0, S_1)=q_0$ and $\pi _Q^+(q_0,q_1, S_0, S_1)=q_1$.
\end{definition}

%\todo{FGB: \textcolor{blue}{Later, by studying the examples, we will see what conditions need to be imposed on the discrete friction force. This may depends on the chosen $ \varphi $.}}

\paragraph{Construction of the constraint.} For the case of nonholonomic mechanics with linear constraint, the discrete constraint can be constructed from a finite difference map, see \cite{CoMa2001} and \cite{MLPe2006}.
We shall extend this construction to our nonlinear situation and with the entropy variable.
\medskip

Following \cite{MLPe2006}, a finite difference map $ \varphi _Q$ on a manifold $Q$ is a diffeomorphism
\[
\varphi_Q :N_0( \Delta_{Q} ) \rightarrow T_0Q,
\]
where $N_0( \Delta_{Q} )$ is a neighborhood of the diagonal $ \Delta_{Q} $ in $Q \times Q$ and $T_0Q$ is a neighborhood of the zero section of $TQ$, which satisfies the following conditions:\\
1. $ \varphi _Q( \Delta_{Q} )$ is the zero section of $TQ$;\\
2. $\tau ( \varphi _Q(N_0( \Delta_{Q} )))=Q$;\\
3. $ \tau (\varphi_{Q} ( q,q ))= q$.  
\medskip

All three conditions can be equivalently described as: $ \varphi_{Q} ( q,q )=0_q$.

\begin{definition}
Taking two finite difference maps
\[
\varphi _Q:N_0( \Delta_{Q} ) \rightarrow T_0Q \quad\text{and}\quad \varphi _ \mathbb{R}  :N_{0}( \Delta_{\mathbb{R}} ) \rightarrow T_0\mathbb{R},
\]
we define the {\bfi finite difference map} $\varphi =\varphi _Q \times \varphi _ \mathbb{R}: N_0( \Delta_{Q \times \mathbb{R}} ) \rightarrow T_0(  Q \times \mathbb{R}  ) $ by
\begin{equation}\label{FDM} 
\varphi ( q _k ,q_{k+1}, S _k , S_{k+1})= \left( \varphi _Q(q _k ,q_{k+1}), \varphi _ \mathbb{R}  (S _k , S_{k+1})\right), 
\end{equation} 
where the neighborhoods are  $N_0( \Delta_{Q \times \mathbb{R}} )\cong N_0( \Delta_{Q} ) \times N_0( \Delta_{\mathbb{R}  } )$, $\Delta_{Q \times \mathbb{R}}= \Delta_{Q}  \times \Delta_{\mathbb{R}  } $, and $T_0(  Q \times \mathbb{R}  ) \cong T_{0}Q \times T_{0}\mathbb{R}$.
\end{definition}

Recall that, in the continuous setting, the phenomenological constraint is the subset  $C_K \subset T(Q \times \mathbb{R}  )$ defined by
\begin{equation}\label{CK} 
(q, \dot q, S, \dot S) \in C_K \Longleftrightarrow \frac{\partial L}{\partial S} (q, \dot q, S) \dot S-  \left\langle F^{\rm fr}(q, \dot q, S) ,  \dot q  \right\rangle =0,
\end{equation} 
where we assumed $P_{H}^{\rm ext}=0$ for simplicity. Notice that for any physically relevant Lagrangian (see also Assumption II in \eqref{physical_assumption} below), the function $P:T(Q \times \mathbb{R}  )\rightarrow \mathbb{R}  $ defined by 
\begin{equation}\label{function_P} 
P(q, \dot q, S, \dot S):=\frac{\partial L}{\partial S} (q, \dot q, S) \dot S- \left\langle   F^{\rm fr}(q, \dot q, S) ,  \dot q \right\rangle,
\end{equation} 
is a {\it submersion}, since $ \frac{\partial P}{\partial \dot S}= \frac{\partial L}{\partial S}(q, \dot q, S)\neq 0$, being minus the temperature. 
Thus $C_K$ is a codimension one submanifold of $T( Q\times \mathbb{R}  )$. Notice also that the zero section is included in $C_K$.

In order to formulate the discrete version of the phenomenological constraint, we need to define a discrete version $C_K ^d \subset (Q \times Q) \times (\mathbb{R}  \times \mathbb{R})$ of the submanifold $C _K \subset T(Q \times \mathbb{R}  )$. Such a discrete version is written with the help of a function $P_d:(Q \times Q) \times ( \mathbb{R}  \times \mathbb{R}  )\rightarrow \mathbb{R}$ as
\begin{equation}\label{CKd}
C_K^d=\{( q _0 , q _1 , S _0 , S _1 ) \in (Q \times Q) \times (\mathbb{R}  \times \mathbb{R}) \mid P_d( q _0 , q _1 , S _0 , S _1 )=0\}.
\end{equation} 
In the definition below, we present a way to construct $C_K ^d $ from a given finite difference map. We will then show how to construct both $C_K^d$ and $L_d$ in a consistent way.

\begin{definition}
Given the constraint in \eqref{CK} and a finite difference map $\varphi $ in \eqref{FDM}, the associated {\bfi discrete constraint} is defined by
\begin{equation}\label{def_CKd} 
C_K^d:= \varphi ^{-1} ( C_K \cap T_0(Q \times \mathbb{R}  )) \subset ( Q \times Q ) \times ( \mathbb{R}  \times\mathbb{R}  ).
\end{equation}
In this case the function $P_d$ in \eqref{CKd} is obtained by composing the function $P: T(Q\times\mathbb{R}) \to \mathbb{R}$ in \eqref{function_P} with the finite difference map $\varphi$. 
\end{definition}
 
It is possible to construct both the discrete phenomenological constraint and the discrete Lagrangian in a consistent way. Indeed, suppose that a finite difference map $\varphi: ( Q \times Q ) \times ( \mathbb{R}  \times\mathbb{R}  ) \to T(Q \times \mathbb{R})$ is given, then one can construct $C_K ^d $ as in \eqref{def_CKd} and $L_d$ as
\begin{equation}\label{construction_L_d} 
L_d:= h L\circ \pi\circ\varphi ,
\end{equation}
where we recall $h$ is the time step and $\pi: T(Q \times \mathbb{R})\cong TQ \times T\mathbb{R} \to TQ \times \mathbb{R}$ is the canonical projection. This formula can be interpreted in two ways. On one hand, as $L_d= h \tilde L \circ \varphi $, where $\tilde{L}:=\pi^{\ast}L$ is the lifted Lagrangian on $T(Q \times \mathbb{R})$, while it can be written as $L_d=h L \circ \Psi $, where we define the \textit{discretizing map} $\Psi$ by $\Psi:= \pi \circ \varphi: ( Q \times Q ) \times ( \mathbb{R}  \times\mathbb{R}  )  \to TQ \times \mathbb{R}$.

\begin{remark}{\rm We will show that the construction of both $L_d$ and $C_K ^d $ from a unique finite difference map $ \varphi $ is not needed to obtain the structure preserving properties in \S\ref{Section_5}. One can choose a finite difference map $ \varphi $ and a discretizing map $ \Psi $ which are not necessarily related through $ \Psi = \pi \circ\varphi $. For example, in nonholonomic mechanics (linear case), there are examples of integrators in which $C_K^d$ and $L_d$ are not constructed from the same finite difference mapping, but which perform extremely well, see (4.18) in \cite{MLPe2006}.}
\end{remark}

\begin{definition}
By analogy with the continuous variational constraint \eqref{Virtual_Constraints}, we define the {\bfi discrete variational constraint} by imposing the following constraint on $ \delta q_k$ and $ \delta S_k$ as
\begin{equation}\label{VC} 
\begin{aligned} 
&D_3 L_d( q _k, q_{k+1}, S _k , S_{k+1}) \delta S_k + D_4 L_d( q _k, q_{k+1}, S _k , S_{k+1}) \delta S_{k+1}\\
& \qquad\qquad = \left\langle  F^{\rm fr -}( q _k, q_{k+1}, S _k , S_{k+1}) , \delta q _k  \right\rangle +  \left\langle F^{\rm fr +}( q _k, q_{k+1}, S _k , S_{k+1}) , \delta q _{k+1} \right\rangle.
\end{aligned}
\end{equation}
\end{definition}

\medskip

\begin{definition}[\textbf{Discrete variational formulation for the nonequilibrium thermodynamics of simple systems}]\label{discrete_VP} Given a discrete Lagrangian $L_d$, discrete friction forces $F^{\rm fr \pm}$,  external forces $F^{\rm ext \pm}$, and a discrete phenomenological constraint $C_K ^d $, a discrete curve $(q_d,S_d)=\{(q_{k},S_{k})\}_{k=0}^{N}$ is a solution of the variational formulation if it satisfies the {\bfi discrete variational condition}
\begin{align*} 
&\delta \sum_{k=0}^{N-1} L_d( q _k, q_{k+1}, S _k , S_{k+1})\\
& \qquad + \sum_{k=0}^{N-1}\left( \left\langle F^{\rm ext -}( q _k, q_{k+1}, S _k , S_{k+1}), \delta q _k  \right\rangle + \left\langle  F^{\rm ext +}( q _k, q_{k+1}, S _k , S_{k+1}) , \delta q _{k+1}  \right\rangle \right) =0,
\end{align*} 
for variations satisfying the {\bfi discrete variational constraint} \eqref{VC} and where the discrete curve $(q_{d},S_{d})=\{(q_{k},S_{k})\}_{k=0}^{N}$ is subject to the {\bfi discrete phenomenological constraint}
\[
(q _k, q_{k+1}, S _k , S_{k+1}) \in C_K ^d.
\]
\end{definition} 

\medskip

A direct application of this variational formulation yields the following result.

\begin{theorem} A discrete curve $(q_{d},S_{d})=\{(q_{k},S_{k})\}_{k=0}^{N}$ is a solution of the variational formulation if and only if it satisfies the following {\bfi discrete evolution equations}:
\begin{equation}\label{discrete_GBYo}
\left\{
\begin{array}{l}
\vspace{0.2cm} D_1L_d(q _k , q_{k+1}, S_k, S_{k+1}) + D_2L_d(q _{k-1} , q_k, S_{k-1}, S_k)\\
\vspace{0.2cm} \qquad \qquad \qquad \qquad \quad + (F^{\rm fr -}+ F^{\rm ext -}) ( q _k, q_{k+1}, S _k , S_{k+1})\\
\vspace{0.2cm} \qquad \qquad \qquad \qquad \quad + (F^{\rm fr +}+  F^{\rm ext +})( q _{k-1}, q_k, S _{k-1} , S_k)=0,\\[2mm]
(q _k, q_{k+1}, S _k , S_{k+1}) \in C_K ^d.
\end{array}
\right.
\end{equation}
\end{theorem}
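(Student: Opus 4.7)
The plan is to apply a direct discrete analogue of the variational calculation performed in the continuous case just below \eqref{LdA_thermo}--\eqref{Kinematic_Constraints}. I vary the discrete action together with the discrete exterior-force contributions, use the discrete variational constraint \eqref{VC} to eliminate the entropy variations in favor of friction-paired position variations, and then extract the discrete evolution equations from the arbitrariness of the interior $\delta q_k$. The phenomenological constraint appears in \eqref{discrete_GBYo} simply because it is built into Definition \ref{discrete_VP}.

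\emph{Step 1 (variation of the action).} For each $k$, varying $L_d(q_k,q_{k+1},S_k,S_{k+1})$ produces the four terms $D_i L_d$ paired with $\delta q_k,\delta q_{k+1},\delta S_k,\delta S_{k+1}$, and the exterior-force sum contributes analogous pairings with $\delta q_k$ and $\delta q_{k+1}$. Summing over $k=0,\ldots,N-1$ yields the full variation of the discrete Lagrange--d'Alembert expression in Definition \ref{discrete_VP}.

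\emph{Step 2 (substitution via the variational constraint).} The key move is to sum the discrete variational constraint \eqref{VC}, which holds at each $k$, over $k=0,\ldots,N-1$, giving
\[
\sum_{k=0}^{N-1}\bigl(D_3 L_d\,\delta S_k + D_4 L_d\,\delta S_{k+1}\bigr) = \sum_{k=0}^{N-1}\bigl(\langle F^{\rm fr-},\delta q_k\rangle + \langle F^{\rm fr+},\delta q_{k+1}\rangle\bigr).
\]
This replaces the entire entropy part of $\delta\sum L_d$ by friction-force pairings with $\delta q_k$'s, globally and without any coefficient-matching in the $\delta S_k$.

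\emph{Step 3 (reindexing and conclusion).} Shift $k\mapsto k-1$ in every $\delta q_{k+1}$-term to re-collect by index, and use the endpoint conditions $\delta q_0=\delta q_N=0$ to discard the boundary contributions. The variational condition reduces to
\[
\sum_{k=1}^{N-1}\Bigl[D_1 L_d(q_k,q_{k+1},S_k,S_{k+1}) + D_2 L_d(q_{k-1},q_k,S_{k-1},S_k) + (F^{\rm fr-}{+}F^{\rm ext-}) + (F^{\rm fr+}{+}F^{\rm ext+})\Bigr]\cdot\delta q_k = 0,
\]
where the force terms are evaluated at $(q_k,q_{k+1},S_k,S_{k+1})$ and $(q_{k-1},q_k,S_{k-1},S_k)$, respectively. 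Since each interior $\delta q_k$ may be chosen freely (for any such choice, \eqref{VC} can be solved inductively for compatible $\delta S_k$'s, using that $D_3 L_d$ and $D_4 L_d$ are nonzero as discrete counterparts of minus the temperature), the bracketed expression must vanish for $k=1,\ldots,N-1$, yielding the first line of \eqref{discrete_GBYo}. The converse direction is immediate by reversing each step: assuming the discrete evolution equations, the variational condition holds for any variation satisfying \eqref{VC}, since the $\delta S_k$-terms cancel against the $F^{\rm fr\pm}$-terms by \eqref{VC} itself.

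The only genuine subtlety is justifying that the $\delta q_k$ are effectively free after the constraints on the $\delta S_k$ are taken into account; an attempt to match coefficients of $\delta q_k$ and $\delta S_k$ separately would be fragile, but doing the substitution globally via the summed form of \eqref{VC} bypasses this and leaves only the purely mechanical arbitrariness of $\delta q_1,\ldots,\delta q_{N-1}$ to exploit.
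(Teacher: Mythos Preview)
Your proposal is correct and follows exactly the approach the paper has in mind: the paper does not give a detailed proof but simply states that ``a direct application of this variational formulation yields the following result,'' and your Steps~1--3 are precisely that direct application spelled out. Your observation that the interior $\delta q_k$ are effectively free---because for any choice of them one can solve the constraints \eqref{VC} inductively for the $\delta S_k$ using $D_4 L_d\neq 0$---is the one point the paper leaves implicit, and you have handled it correctly.
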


\color{black}

\paragraph{Discrete flow map.} By applying the implicit function theorem, we see that if the following matrix
\begin{equation}\label{regularity_criteria} 
\left[
\begin{array}{ll}
D_2 D_1 L_d(r) +D_2 F_d^- (r)& D_4D_1 L_d (r)+ D_4 F_d^- (r) \\
D_2P_d(r) & D_4P_d(r) 
\end{array}
\right]
\end{equation} 
is invertible for all $r:=(q_0, q_1,S_0, S_1)$, where we set $F_d^-:=F^{\rm fr-}+F^{\rm ext-}$, then the scheme \eqref{discrete_GBYo} yields a well-defined {\bfi discrete flow}
\begin{equation}\label{discrete_flow}
F_{L_d}: (q _k , q_{k+1}, S_k, S_{k+1})  \in C_K ^d \mapsto (q _{k+1} , q_{k+2}, S_{k+1}, S_{k+2})  \in C_K ^d.
\end{equation} 
It is easy to check that a matrix of the form \eqref{regularity_criteria} is invertible if and only if $D_4P_d(r) \neq 0$ and the matrix
\begin{equation}\label{criteria_rewritten} 
D_2 D_1 L_d(r)+D_2 F_d^- (r)- \frac{1}{D_4P_d(r)} \left(D_4D_1 L_d (r)+ D_4 F_d^- (r)  \right) D_2P_d(r)
\end{equation} 
is invertible. This criteria generalizes to the case of thermodynamics, the regularity criteria of the discrete Lagrangian of discrete mechanics, namely the condition that $D_2 D_1 L_d(q_0, q_1)$ is invertible for all $(q_0,q_1)$, see \eqref{LT_regular}, which may be recovered from \eqref{criteria_rewritten} when the entropy variable and the forces are absent.

\pagebreak

\section{Structure preserving properties}\label{Section_4} 
In this Section, we present a property of the flow $F_t$ of a simple and closed system which reduces to the symplecticity of the flow in absence of thermal effects. Then we show that the discrete flow of our numerical integrator verifies a discrete version of this property.

\subsection{Thermodynamics of simple systems - continuous case}

It is well known that when the Lagrangian $L:TQ \rightarrow \mathbb{R}  $ of a mechanical system is regular, then the flow $F_t: TQ \to TQ$ of the Euler-Lagrange equations preserves the symplectic form $\Omega _L=(\mathbb{F}  L) ^\ast \Omega _{\rm can}$ called the {\it Lagrangian two-form} on $TQ$:
\begin{equation}\label{symplecticity} 
F_t^\ast \Omega _L= \Omega _L.
\end{equation} 
In order to formulate the extension of this property to the case of the thermodynamics of simple systems, we first make below some definitions and assumptions concerning the Lagrangian function in thermodynamics.

\paragraph{Regularity and assumptions on the Lagrangian.}
 Given a Lagrangian $L: TQ \times \mathbb{R}  \rightarrow \mathbb{R}  $, the {\it Legendre transform} is defined by
\[
\mathbb{F} L: TQ \times \mathbb{R}  \rightarrow T^*Q, \quad \mathbb{F}  L(q,v,S):= \Big( q, \frac{\partial L}{\partial v}(q,v,S)\Big).
\]
The only difference with the standard case in mechanics is the dependence on $S$. By definition, we say that the Lagrangian $L(q,v,S)$ is \textit{regular} if and only if {\it for each $S$ fixed}, the map
\[
(q,v) \in T Q \mapsto \mathbb{F}  L(q,v,S) \in  T^*Q
\]
is a local diffeomorphism. One easily checks that this is equivalent to the invertibility of the matrix $ \frac{\partial ^2 L}{\partial v ^i v ^j }(q,v,S)$ for all $(q,v,S)$.
\medskip

We define the following two Lagrangian forms on $TQ \times \mathbb{R}$, namely, the {\it Lagrangian one-form}
\[
\Theta _L(q,v,S):= (\mathbb{F}  L) ^\ast\Theta _{\rm can}= \frac{\partial L}{\partial v}(q,v,S) \mbox{d}q 
\]
and the {\it Lagrangian two-form}
\[
\Omega _L:= - \mathbf{d}\Theta _L \in \Omega ^2 (TQ \times \mathbb{R}  ),
\]
which reads locally
\[
\Omega _L(q,v,S)=  \frac{\partial ^2 L}{\partial v ^i \partial q ^j } (q,v,S) {\mbox{d}q } ^i \wedge {\mbox{d}q }^j +  \frac{\partial ^2 L}{\partial v ^i \partial v ^j } (q,v,S) {\mbox{d}q } ^i \wedge  {\mbox{d}v}^j +  \frac{\partial ^2 L}{\partial v ^i \partial S }  (q,v,S) {\mbox{d}q } ^i \wedge {\mbox{d}S}.
\]
In absence of the entropy variable, these forms recover the usual Lagrangian forms on $TQ$ defined in Lagrangian mechanics.

\medskip

We now write two physical assumptions made on the Lagrangian $L:TQ \times\mathbb{R} \rightarrow \mathbb{R}$.
 
\begin{itemize}
\item{}{\underline{Assumption I}:} 
A first physical restriction on the Lagrangian is the following assumption
\begin{equation}\label{assumption} 
\frac{\partial ^2 L}{\partial v ^i \partial S}=0, %\quad \textsc{Assumption I},
\end{equation}
which means that the temperature $T= - \frac{\partial L}{\partial S}$ does not depend on $v $ or, equivalently, the momentum $p=\frac{\partial L}{\partial v}$ does not depend on $S$. In other words,
\[
\frac{\partial L}{\partial S}(q,v,S)= \frac{\partial L}{\partial S}(q,S) \quad\text{and}\quad\frac{\partial L}{\partial v}(q,v,S)= \frac{\partial L}{\partial v}(q,v).
\]
It follows from Assumption I \eqref{assumption} that the Lagrangian is necessarily of the form
\[
L(q,v,S)= K( q  , v)-U(q , S),
\]
for two functions $K:TQ \rightarrow \mathbb{R}  $ and $U: Q \times\mathbb{R} \rightarrow \mathbb{R}  $. Under Assumption I, the Lagrangian two-form reads
\[
\Omega _L(q,v,S)=\frac{\partial ^2 L}{\partial v ^i \partial q ^j } (q,v,S) {\mbox{d}q } ^i \wedge {\mbox{d}q }^j +  \frac{\partial ^2 L}{\partial v ^i \partial v ^j }(q,v,S)  {\mbox{d}q }^i \wedge  {\mbox{d}v }^j.
\]
In this case, $ \Omega _L$ can be seen as a $S$-dependent two-form on $TQ$. Moreover, $ \Omega _L$ is symplectic on $TQ$, {\it for each $S$ fixed}, if and only if the Lagrangian is regular.
\medskip

\item{}{\underline{Assumption II}:} 
Any physical Lagrangian must verify the condition
\begin{equation}\label{physical_assumption} 
\frac{\partial L}{\partial S}(q,v,S) <0,\quad \text{for all $(q,v,S)$}, %\quad \textsc{Assumption II},
\end{equation}
since $\frac{\partial L}{\partial S}=-T$ is identified with minus the temperature. If Assumption I  \eqref{assumption} is verified, then Assumption II \eqref{physical_assumption} reads simply
\[
\frac{\partial U}{\partial S}(q,S) >0,\quad \text{for all $(q,S)$}.
\] 
\end{itemize}

\medskip

\paragraph{Structure preserving property.} 
Recall that given a Lagrangian $L:TQ \times \mathbb{R} \rightarrow \mathbb{R}  $, and the forces $F^{\rm fr}, F^{\rm ext}:TQ \times \mathbb{R}  \rightarrow T^*Q$, the evolution equations are given by the system \eqref{thermo_mech_equations}, rewritten here for the curve $(q(t), v(t), S(t))$ as
\begin{equation}\label{simple_system} 
\left\{ 
\begin{array}{l}
\vspace{0.2cm}\displaystyle\frac{d}{dt}\frac{\partial L}{\partial \dot q}( q(t), v(t), S(t)) - \frac{\partial L}{\partial q} (q(t),v(t), S(t))\\
\vspace{0.2cm}\qquad \qquad \qquad  \qquad \qquad = F^{\rm ext} (q(t), v(t), S(t))+ F^{\rm fr} (q(t),v(t), S(t)),\\
\vspace{0.2cm}\displaystyle\frac{\partial L}{\partial S} (q(t), v(t), S(t)) \dot S (t) =  F^{\rm fr} (q(t), v(t), S(t) )\cdot v(t)   -P_H^{\rm ext} (t),\\
\dot q(t)= v(t),
\end{array}
\right.
\end{equation} 
where $\dot q(t)=\frac{dq}{dt}$.
We assume that the Lagrangian is regular and that the physical assumptions \eqref{assumption} and \eqref{physical_assumption} are verified. In this case, one observes that \eqref{simple_system} gives a well-defined first order ordinary differential equation for the curve $(q(t), v(t), S(t))$ and, therefore, a well-defined flow $F_t$. Let us identify $TQ \times \mathbb{R}  $ with the space of solution of \eqref{simple_system} by using the correspondence
\[
(q_0, v_0, S_0) \in TQ  \times\mathbb{R}  \longleftrightarrow F_t(q_0, v_0, S_0)=(q(t), v(t), S(t)) \in \text{Solutions of \eqref{simple_system}} ,
\]
where $F_t: TQ  \times\mathbb{R} \to TQ  \times\mathbb{R}$ is the flow of the system \eqref{simple_system}. 
\medskip

We define the horizontal one-forms $ \omega ^{\rm fr}, \omega ^{\rm ext}\in \Omega ^1 (TQ \times \mathbb{R}  )$ associated to the friction and external forces by
\begin{align*} 
\omega ^{\rm fr}(q,v,S)\cdot(\delta q,\delta v,\delta S)&:= \left<F^{\rm fr}(q,v,S), \delta q\right>,\\[1mm]
\omega ^{\rm ext}(q,v,S)\cdot(\delta q,\delta v,\delta S)&:= \left<F^{\rm ext}(q,v,S), \delta q\right>,
\end{align*}
where $(\delta q,\delta v,\delta S) \in T_{(q,v,S)}(TQ \times \mathbb{R})$.
We also define the one-form $ \omega^{ \tau  }:=  T \mbox{d}S$ on $TQ \times \mathbb{R}  $ by
\[
\omega{^\tau }(q,v,S)\cdot( \delta q, \delta v, \delta S):= T(q,S)\delta S = - \frac{\partial L}{\partial S}(q,v,S) \delta S.
\]
In order to derive the structure preserving property, we shall extend the argument used in \cite[\S 1.2.3]{MaWe2001}.
Let us define the restricted action map as
\[
\hat{ \mathfrak{S} }: TQ \times \mathbb{R}   \rightarrow \mathbb{R}  , \quad \hat {\mathfrak{S} }( q_0 , v _0 , S_0):=\int_0^T L\big(F_t(q_0,v_0, S_0)\big) dt.
\]
The derivative of this map reads
 \fontsize{9pt}{13pt}\selectfont
\begin{align*}
&\mathbf{d} \hat { \mathfrak{S} }( q_0 , v_0, S_0) \cdot (\delta q_0, \delta v_0, \delta S_0)\\
&= \int_0^T \left< \frac{\partial L}{\partial q}(q(t), \dot q(t), S(t))- \frac{d}{dt}\frac{\partial L}{\partial \dot q}(q(t), \dot q(t), S(t)), \delta q (t) \right>dt + 
\left<\frac{\partial L}{\partial \dot q}(q(t), \dot q(t), S(t), \delta q (t) \right> \bigg|_{t=0}^{t=T} \\
& \qquad \qquad  +\int_0^T  \frac{\partial L}{\partial S}(q(t), \dot q(t), S(t)) \delta S(t) dt\\
&= - \int_0^T \left<F^{\rm fr+ext}(q(t), \dot q(t), S(t)), \delta q (t) \right> \, dt +  \Theta _L(q(t), \dot q(t), S(t))\cdot (\delta q (t),\delta v(t), \delta S(t)) \bigg|_{t=0}^{t=T}\\
& \qquad \qquad  -\int_0^T T(q(t), S(t)) \delta S(t)dt\\
%&= - \int_0^T \omega ^{\rm fr+ext}(q(t), \dot q(t), S(t)) \cdot (\delta q (t), \delta v(t), \delta S(t)) \, dt \\
%&\qquad\qquad  + \left( F_T ^\ast  \Theta _L- \Theta _L  \right) (q_0, v_0, S_0) ( \delta q_0, \delta v_0, \delta S_0) -\int_0^T F_t ^\ast (T d S) (q_0, v_0, S_0) \cdot ( \delta q_0, \delta v_0, \delta S_0)dt \\
&= - \int_0^TF_t ^\ast \omega ^{\rm fr+ext} ( q_0 , v_0, S_0) \cdot (\delta q_0, \delta v_0,\delta S_0)\, dt  \\
&\qquad\qquad  + \left( F_T ^\ast \Theta_L-  \Theta _L  \right) \cdot (q_0, v_0, S_0) ( \delta q_0, \delta v_0, \delta S_0) -\int_0^T F_t ^\ast \omega ^{ \tau } (q_0, v_0, S_0) \cdot ( \delta q_0, \delta v_0, \delta S_0)dt,
\end{align*}
\normalsize 
\noindent
where we used the notations $F^{\rm fr+ext}:=F^{\rm fr} + F^{\rm ext}$ and  $ \omega  ^{\rm fr+ext}:  =\omega  ^{\rm fr} + \omega  ^{\rm ext}$.
Thus we obtain the relation
\[
\mathbf{d} \hat {\mathfrak{S} }=F_T ^\ast \Theta _L - \Theta _L - \int_0^T F_t ^\ast ( \omega  ^{\rm fr+ext+ \tau } )d t. 
\]
as one-forms on $TQ \times \mathbb{R} $, where $ \omega  ^{\rm fr+ext+ \tau } :=\omega  ^{\rm fr} + \omega  ^{\rm ext}+ \omega  ^{\rm \tau } $.
By taking the exterior derivative of this equality, we obtain the following result.

\begin{theorem}\label{theorem_continuous}  Consider a simple thermodynamic system and assume that the Lagrangian $L(q,\dot{q},S)$ is regular and the physical assumptions \eqref{assumption} and \eqref{physical_assumption} are verified. Then \eqref{simple_system} defines a well-defined flow $F_t$ on $TQ  \times\mathbb{R} $. This flow verifies the following generalization of the symplectic property \eqref{symplecticity} of the flow in classical mechanics:
\begin{equation}\label{symplecticity_thermo} 
F_T^\ast  \Omega _L= \Omega _L-\mathbf{d} \int_0^T F_t ^\ast ( \omega ^{\rm fr+ext+ \tau })\, dt.
\end{equation} 
\end{theorem}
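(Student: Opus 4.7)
The plan is to simply take the exterior derivative of the one-form identity
\[
\mathbf{d}\hat{\mathfrak{S}} = F_T^\ast \Theta_L - \Theta_L - \int_0^T F_t^\ast(\omega^{\rm fr+ext+\tau})\,dt
\]
already derived in the computation immediately preceding the theorem statement. Since $\mathbf{d}^2 = 0$, the left-hand side vanishes. Because pullback by a smooth map commutes with exterior derivative, $\mathbf{d}(F_T^\ast \Theta_L) = F_T^\ast(\mathbf{d}\Theta_L) = -F_T^\ast \Omega_L$, and similarly $\mathbf{d}\Theta_L = -\Omega_L$ by the definition of the Lagrangian two-form. Exterior differentiation also passes through the integral in $t$ because $t$ is an external parameter and $F_t^\ast \omega^{\rm fr+ext+\tau}$ is a smooth one-parameter family of one-forms on $TQ \times \mathbb{R}$. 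Collecting these facts and rearranging the resulting identity gives exactly \eqref{symplecticity_thermo}.

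Before carrying out this step, however, I would first verify that system \eqref{simple_system} does give a well-defined flow $F_t$ on $TQ \times \mathbb{R}$. Under Assumption I, the Lagrangian splits as $L(q,v,S) = K(q,v) - U(q,S)$, so the Hessian $\partial^2 L / \partial v^i \partial v^j$ coincides with that of $K$; regularity of $L$ thus makes the first equation of \eqref{simple_system} solvable smoothly for $\dot v$ as a function of $(q,v,S)$. Assumption II ensures $\partial L / \partial S \neq 0$, which allows the second equation to be solved smoothly for $\dot S$. Together these make \eqref{simple_system} a well-posed first-order ODE on $TQ \times \mathbb{R}$, producing the flow $F_t$ that the theorem asserts.

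The bulk of the genuine work, namely the explicit derivation of $\mathbf{d}\hat{\mathfrak{S}}$, is done in the passage right before the theorem; it uses integration by parts on the action, the evolution equations \eqref{simple_system} to eliminate the Euler--Lagrange residual in favor of $F^{\rm fr}+F^{\rm ext}$, and the second equation to convert the $\partial L/\partial S \, \delta S$ term into $-T\delta S = -\omega^\tau$. Once that identity is in hand, the main technical point remaining is the interchange $\mathbf{d}\int_0^T F_t^\ast \omega\,dt = \int_0^T \mathbf{d}(F_t^\ast \omega)\,dt$; this is justified by differentiation under the integral sign, since the integrand is smooth in $t$ on a compact interval and $\mathbf{d}$ only involves derivatives in the $TQ \times \mathbb{R}$ directions. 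I anticipate no conceptual obstacle, only bookkeeping of signs.

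Finally, as a sanity check, I would verify the stated reduction: in the absence of thermal effects (no entropy variable, $F^{\rm fr} = 0$, $F^{\rm ext} = 0$), the one-forms $\omega^{\rm fr}$, $\omega^{\rm ext}$, and $\omega^\tau$ all vanish, and $\Omega_L$ reduces to the usual Lagrangian symplectic form on $TQ$, so \eqref{symplecticity_thermo} collapses to the classical identity $F_T^\ast \Omega_L = \Omega_L$.
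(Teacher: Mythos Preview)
Your proposal is correct and follows exactly the paper's approach: the paper derives the one-form identity $\mathbf{d}\hat{\mathfrak{S}} = F_T^\ast \Theta_L - \Theta_L - \int_0^T F_t^\ast(\omega^{\rm fr+ext+\tau})\,dt$ in the passage immediately preceding the theorem and then simply states that taking the exterior derivative yields \eqref{symplecticity_thermo}. Your additional remarks on the well-definedness of the flow and the reduction to the classical case are consistent with the discussion the paper gives just before the theorem.
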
 

\medskip

Note that we can write this property as
\[
F_T^\ast  \Omega _L= \Omega _L-\int_0^T F_t ^\ast ( \mathbf{d} \omega ^{\rm fr+ext+ \tau })\, dt.
\]
%
%\todo{FGB: \color{blue}We will see what arises in the examples of Section \ref{Section_5} with $ \mathbf{d} \omega ^{\rm fr}$, $ \mathbf{d} \omega ^{\rm ext}$, and $ \mathbf{d} \omega ^{ \tau }=0$. 
%
%Note that we have
%\[
%\omega ^{\rm fr}=F^{\rm fr}(q,v,S) \mbox{d}q \quad \text{so}\quad \mathbf{d}  \omega ^{\rm fr}=\frac{\partial F^{\rm fr}}{\partial v} \mbox{d}v \wedge \mbox{d}q+  \frac{\partial F^{\rm fr}}{\partial S} \mbox{d}S \wedge \mbox{d}q.
%\]
%So $ \mathbf{d}  \omega ^{\rm fr}=0$ if and only if $F^{\rm fr}(q,v,S)= F^{\rm fr}(q)$. 
%This implies that $F^{\rm fr}$ can be locally described by a gradient of some function $\phi$ as $ F^{\rm fr}=\mathbf{d}\phi$. However, this implies that $ F^{\rm fr}$ is nothing but a conservative force, which is thus not compatible with the property of the friction, i.e., $ \left\langle F^{\rm fr}(q,v,S), v \right\rangle \leq 0$ for all $(q,v,S)$.
%
%Note that we have (under the physical assumption)
%\[
%\omega ^{ \tau }= T(q,S)\mbox{d}S= - \frac{\partial L}{\partial S}(q,S) \mbox{d}S \quad \text{so}\quad \mathbf{d} \omega ^{ \tau }= \frac{\partial T}{\partial q}(q,S) \mbox{d}q \wedge \mbox{d}S.
%\]
%\color{black}
%}

\subsection{Thermodynamics of simple systems - discrete case}\label{DiscreteSimpThermo}

In this section, we will show that the discrete flow of our variational integrator satisfies a discrete analogue of the property \eqref{symplecticity_thermo}. We assume that the discrete thermodynamical system satisfies the regularity criteria \eqref{regularity_criteria}. This ensures the existence of the discrete flow $F_{L_d}: C_K ^d \rightarrow C_K ^d$:
\[
(q _k , q_{k+1}, S_k, S_{k+1})  \in C_K ^d \mapsto (q _{k+1} , q_{k+2}, S_{k+1}, S_{k+2})  \in C_K ^d,
\]
obtained by solving the numerical scheme \eqref{discrete_GBYo}, namely,
\begin{equation}\label{discrete_GBYo_bis}
\left\{
\begin{array}{l}
 D_1L_d(q _k , q_{k+1}, S_k, S_{k+1}) + D_2L_d(q _{k-1} , q_k, S_{k-1}, S_k)\\
\qquad \qquad \qquad \qquad \quad + (F^{\rm fr -}+ F^{\rm ext -}) ( q _k, q_{k+1}, S _k , S_{k+1})\\
\qquad \qquad \qquad \qquad \quad + (F^{\rm fr +}+  F^{\rm ext +})( q _{k-1}, q_k, S _{k-1} , S_k)=0,\\[2mm]
(q _k, q_{k+1}, S _k , S_{k+1}) \in C_K ^d.
\end{array}
\right.
\end{equation}
In order to formulate the property of the discrete flow, we need to define the following discrete forms on $(Q \times Q) \times (\mathbb{R}  \times \mathbb{R}  )$.

\begin{definition} Given a discrete thermodynamical system with discrete Lagrangian $L_d$ and discrete friction and external forces $F^{\rm fr \pm}$ and $F^{\rm ext \pm}$, we define the discrete one-forms
\begin{align*} 
\Theta _{L_d, F_d}^-(q_0, q_1, S_0, S_1)&:=-D_1L_d(q_0, q_1, S_0, S_1) \mbox{d}q _0-F_d^-(q_0, q_1, S_0, S_1) \mbox{d} q _0,\\
\Theta _{L_d, F_d}^+(q_0, q_1, S_0, S_1)&:=D_2L_d(q_0, q_1, S_0, S_1)  \mbox{d} q _1+F_d^+(q_0, q_1, S_0, S_1)  \mbox{d}q _1,
\end{align*}
where $F_d^\pm:= F^{\rm fr \pm}+ F^{\rm ext \pm}$, and the discrete one-forms
\begin{align*} 
\omega ^{\rm fr}_d(q _0 , q_1 , S_0 , S _1 ):&= F^{\rm fr -} (q _0 , q_1 , S_0 , S _1 )\mbox{d}q_0+ F^{\rm fr +} (q _0 , q_1 , S_0 , S _1 )\mbox{d}q_1,\\
\omega ^{\rm ext}_d(q _0 , q_1 , S_0 , S _1 ):&= F^{\rm ext -} (q _0 , q_1 , S_0 , S _1 )\mbox{d}q_0+ F^{\rm ext +} (q _0 , q_1 , S_0 , S _1 )\mbox{d}q_1,\\
\omega ^ \tau _d(q _0 , q_1 , S_0 , S _1 ):&= - D_3L_d (q _0 , q_1 , S_0 , S _1 )\mbox{d}S_0 -  D_4L_d (q _0 , q_1 , S_0 , S _1 )\mbox{d}S_1,
\end{align*} 
which are the discrete analogue of the one-forms $ \omega ^{\rm fr}$, $\omega ^{\rm ext}$, $\omega^ \tau $ defined in the continuous case earlier.
\end{definition}

 The one-forms $\Theta _{L_d,F_d}^\pm \in \Omega ^1 \big((Q \times Q  )\times ( \mathbb{R}   \times \mathbb{R}  )\big)$ are related to the canonical one-form $ \Theta _{\rm can} \in \Omega ^1  (T^*Q)$ as
\[
\Theta _{L_d,F_d}^\pm= ( \mathbb{F}  L_{F_d}^\pm) ^\ast \Theta _{\rm can},
\]
where $\mathbb{F}  L_{F_d}^\pm: (Q \times Q  )\times ( \mathbb{R}   \times \mathbb{R}  ) \rightarrow T^*Q$ are the discrete Legendre transforms with force defined by
\begin{align*} 
\mathbb{F}  L_{F_d}^-(q_0, q_1, S_0, S_1)&:= \big(q_0, -D_1L_d(q_0,q_1, S_0, S_1)-F_d^-(q_0,q_1, S_0, S_1)\big)=(q_0,p_0),\\
\mathbb{F}  L_{F_d}^+(q_0, q_1, S_0, S_1)&:= \big(q_1, D_2L_d(q_0,q_1, S_0, S_1)+F_d^+(q_0,q_1, S_0, S_1)\big)=(q_1,p_1).
\end{align*}
These one-forms are the natural extensions of the one-forms for the discrete Euler-Lagrange equations with external forces considered in \cite{MaWe2001}.

\medskip

We show below that the discrete flow \eqref{discrete_flow} satisfies a discrete analogue of the property \eqref{symplecticity_thermo} of the continuous flow obtained in Theorem \ref{theorem_continuous}.
To obtain this result, we extend the argument used in \cite[\S 1.3.2]{MaWe2001}. Similarly with the continuous case earlier, we identify the space of solutions of \eqref{discrete_GBYo} with the space of initial conditions $(q _0 , q_1 , S_0 , S _1 )$ and we define the restricted discrete action map
\[
\hat {\mathfrak{S} }_d(q _0 , q_1 , S_0 , S _1 ):= \sum_{k=0} ^{N-1}L _d ( q _k , q_{k+1}, S _k , S_{k+1}),
\]
where on the right hand side, the discrete action functional is evaluated on the solution of \eqref{discrete_GBYo} with initial conditions $(q _0 , q_1 , S_0 , S _1 )$.

\begin{theorem}\label{theorem_discrete} Consider the numerical scheme \eqref{discrete_GBYo_bis} arising from the discrete variational formulation of Definition \ref{discrete_VP} for the nonequilibrium thermodynamic of a simple system. Assume that the regularity criteria \eqref{regularity_criteria} is verified. Then the scheme \eqref{discrete_GBYo_bis} induces a well-defined discrete flow $F_{L_d}: C_K ^d \rightarrow C_K ^d$:
\[
(q _k , q_{k+1}, S_k, S_{k+1})  \in C_K ^d \mapsto (q _{k+1} , q_{k+2}, S_{k+1}, S_{k+2})  \in C_K ^d.
\]
Moreover, this flow verifies the following property
\begin{equation}\label{discrete_symplecticity_thermo}
\big( F_{L_d}^{(N-1)} \big)^\ast \Omega  _{L_d, F_d}^+ - \Omega  _{L_d, F_d}^-= - \mathbf{d} \sum_{k=0} ^{N-1} \big( F_{L_d}^{(k)}\big) ^\ast \omega^ {\rm fr+ ext +\tau} _d,
\end{equation} 
which is a discrete version of the property \eqref{symplecticity_thermo} of the flow of a simple and closed system. This property is also an extension to nonequilibrium thermodynamics of the symplectic property 
\eqref{symplectic_integrator} of the flow of a variational integrator in classical mechanics. 
\end{theorem}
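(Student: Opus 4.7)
My plan is to mirror, in the discrete setting, the restricted-action argument that produced Theorem \ref{theorem_continuous}: establish well-definedness of the flow from the regularity hypothesis, compute the differential of a restricted discrete action, and then apply $\mathbf{d}^2 = 0$. The two claims of the theorem (existence of $F_{L_d}$ and the identity \eqref{discrete_symplecticity_thermo}) are handled separately, though both rest on careful bookkeeping with the discrete evolution equations \eqref{discrete_GBYo_bis}.

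For the existence of the flow, I view the combined system formed by the discrete evolution equation at index $k+1$ together with the phenomenological constraint $P_d=0$ as implicit equations for the unknowns $(q_{k+2},S_{k+2})$, with datum $(q_k,q_{k+1},S_k,S_{k+1})\in C_K^d$. Their Jacobian with respect to $(q_{k+2},S_{k+2})$, evaluated at $r=(q_{k+1},q_{k+2},S_{k+1},S_{k+2})$, is precisely the matrix \eqref{regularity_criteria}. Invertibility of this matrix together with the implicit function theorem then yields a well-defined locally smooth flow $F_{L_d}:C_K^d\to C_K^d$.

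For the structure-preservation identity, I introduce the restricted discrete action
\[
\hat{\mathfrak{S}}_d(q_0,q_1,S_0,S_1) := \sum_{k=0}^{N-1} L_d(r_k),\qquad r_k := (q_k,q_{k+1},S_k,S_{k+1}),
\]
where $r_k$ for $k\geq 1$ is obtained by iterating $F_{L_d}$ on $r_0\in C_K^d$. Differentiating term by term and reindexing separates boundary contributions $D_1L_d(r_0)\,dq_0$ and $D_2L_d(r_{N-1})\,dq_N$ from the interior combinations $D_1L_d(r_k)+D_2L_d(r_{k-1})$ for $1\leq k\leq N-1$. The discrete evolution equation \eqref{discrete_GBYo_bis} replaces each such interior combination by $-F_d^-(r_k)-F_d^+(r_{k-1})$; a second reindex of these $F_d^\pm$ contributions turns the result into the full sum $\sum_{k=0}^{N-1}(F_{L_d}^k)^\ast(\omega_d^{\rm fr}+\omega_d^{\rm ext})$ minus two boundary residues, and these residues combine with the endpoint $L_d$-derivatives to form precisely $-\Theta_{L_d,F_d}^-(r_0)$ and $(F_{L_d}^{N-1})^\ast\Theta_{L_d,F_d}^+$. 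The entropy contributions $D_3L_d(r_k)\,\delta S_k+D_4L_d(r_k)\,\delta S_{k+1}$ require no substitution at all: by the very definition of $\omega_d^\tau$ they coincide with $-(F_{L_d}^k)^\ast\omega_d^\tau$. Collecting everything produces the direct discrete analogue of the continuous identity,
\[
\mathbf{d}\hat{\mathfrak{S}}_d \;=\; (F_{L_d}^{N-1})^\ast\Theta_{L_d,F_d}^+ \,-\, \Theta_{L_d,F_d}^- \,-\, \sum_{k=0}^{N-1} (F_{L_d}^k)^\ast\omega_d^{\rm fr+ext+\tau}.
\]

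Taking one further exterior derivative, using $\mathbf{d}^2\hat{\mathfrak{S}}_d=0$ and the convention $\Omega_{L_d,F_d}^\pm := -\mathbf{d}\Theta_{L_d,F_d}^\pm$ (parallel to $\Omega_L=-\mathbf{d}\Theta_L$ in the continuous theorem), then delivers \eqref{discrete_symplecticity_thermo}. The main technical obstacle is the discrete summation-by-parts in the $q$-part: two index shifts (one from $D_2L_d(r_k)\,\delta q_{k+1}$ and one from $F_d^+(r_k)\,\delta q_{k+1}$) must be tracked simultaneously so that the resulting boundary residues merge cleanly with $D_1L_d(r_0)$ and $D_2L_d(r_{N-1})$ into the force-augmented one-forms $\Theta_{L_d,F_d}^\pm$. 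Once this bookkeeping is executed, the remaining steps are a formal transcription of the proof of Theorem \ref{theorem_continuous}.
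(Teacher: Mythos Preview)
Your proposal is correct and follows essentially the same approach as the paper: you introduce the restricted discrete action $\hat{\mathfrak{S}}_d$, perform the discrete summation-by-parts in the $q$-variables, substitute the evolution equations for the interior terms, recombine the boundary residues with the endpoint derivatives into $\Theta_{L_d,F_d}^\pm$, recognize the entropy terms as $-\omega_d^\tau$, and then take an exterior derivative. Your handling of the existence of the flow via the implicit function theorem with Jacobian \eqref{regularity_criteria} also matches the paper's argument preceding the theorem.
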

\begin{proof} The first part of the theorem has been proven above. We now prove formula \eqref{discrete_symplecticity_thermo}.
Using the notations
$
L_d ^k :=L _d ( q _k , q_{k+1}, S _k , S_{k+1}) \quad\text{and}\quad F^{k \pm}:= (F^{\rm fr \pm}+ F^{\rm ext \pm}) ( q _k , q_{k+1}, S _k , S_{k+1}),
$
we compute the derivative of $\hat {\mathfrak{S} }_d$ as
\fontsize{9pt}{16pt}\selectfont
\begin{align*} 
&\mathbf{d} \hat {\mathfrak{S} }_d(q _0 , q_1 , S_0 , S _1 ) \cdot (\delta q _0 , \delta q_1 , \delta S_0 , \delta S _1 )\\
&\quad = \sum_{k=0} ^{N-1}D_1L _d ^k \delta q _k  + D_2L _d ^k \delta q _{k+1} +D_3L _d ^k \delta S _k +D_4L _d ^k \delta S _{k+1} \\
&\quad =D_1L _d ^0 \delta q _0+ \sum_{k=1} ^{N-1}( D_1L _d ^k  + D_2L _d ^{k-1}) \delta q _k + D_2L_d^{N-1} \delta q _N%\\& \qquad \qquad 
+ \sum_{k=0} ^{N-1} \left( D_3L _d ^k \delta S _k +D_4L _d ^k \delta S _{k+1} \right) \\
&\quad =D_1L _d ^0 \delta q _0- \sum_{k=1} ^{N-1}( F^{k -}_d+F^{k-1 +}_d) \delta q _k + D_2L_d^{N-1} \delta q _N%\\& \qquad\qquad
+ \sum_{k=0} ^{N-1} \left( D_3L _d ^k \delta S _k +D_4L _d ^k \delta S _{k+1} \right) \\
&\quad =D_1L _d ^0 \delta q _0+ F_d^{0 -} \delta q _0 - \sum_{k=0} ^{N-1} \left(  F^{ k -}_d \delta q_k +F^{k +}_d \delta q _{k+1} \right)  \\
&\hspace{3cm}+ F_d^{N-1 +} \delta q_N+ D_2L_d^{N-1} \delta q _N+ \sum_{k=0} ^{N-1} \left( D_3L _d ^k \delta S _k +D_4L _d ^k \delta S _{k+1} \right) \\
&\quad=- \Theta _{L_d, F_d}^-(q_0, q_1, S_0, S_1)( \delta q_0, \delta q_1, \delta S_0, \delta S_1) \\
&\hspace{3cm}-\sum_{k=1} ^{N-1} \omega _d^{\rm fr+ ext}(q_k,q_{k+1}, S_k, S_{k+1}) ( \delta q_k, \delta q_{k+1}, \delta S_k, \delta S_{k+1})\\
&\quad+ \Theta _{L_d, F_d}^+(q_{N-1}, q_N, S_{N-1}, S_N)( \delta q_{N-1}, \delta q_N, \delta S_{N-1}, \delta S_N)
\\
&\hspace{3cm}- \sum_{k=0} ^{N-1}\omega^ \tau _d (q _k , q_{k+1} , S_k , S_{k+1} ) (\delta q _k , \delta q_{k+1} , \delta S_k , \delta S_{k+1} ).
\end{align*} 
\normalsize
By using the notation
\[
F_{L_d}^{(k)}:= \underbrace{F_{L_d} \circ ...\circ F_{L_d}}_{k},
\]
we can write this differential as
\[
\mathbf{d} \hat {\mathfrak{S} }_d=  \big( F_{L_d}^{(N-1)} \big) ^\ast \Theta _{L_d, F_d}^+ -  \Theta _{L_d, F_d}^- - \sum_{k=0} ^{N-1} \big( F_{L_d}^{(k)}\big) ^\ast \omega^ {\rm fr+ ext +\tau} _d .
\]
Taking the exterior derivative of this relation, we have the result.
\end{proof}

\section{Examples}\label{Section_5} 

In this section, we develop several numerical schemes based on the variational integrator for the nonequilibrium thermodynamics derived in Section \ref{Section_3} by considering several standard discretizations of a given Lagrangian. Then, we illustrate our schemes with the example of the mass-spring-friction system moving in an ideal gas. 

\subsection{Variational discretization schemes}

We consider three standard types of approximation of the time integral of a given Lagrangian. This leads to numerical schemes which are extensions  of the {\it Verlet scheme}, of the {\it variational midpoint rule scheme} as well as of the {\it symmetrized Lagrangian variational integrator}. Let us assume $Q= \mathbb{R}  ^n $.

\paragraph{Variational scheme 1.} Let us first choose the finite difference map $\varphi: ( Q \times Q ) \times ( \mathbb{R}  \times\mathbb{R}  ) \to T(Q \times \mathbb{R})$ as
\[
\varphi( q _k , q_{k+1}, S _k , S_{k+1}) =\left(q _k, S_k , \frac{q_{k+1}-q _k }{h} ,\frac{S_{k+1}-S_k}{h} \right) .
\]
For a given Lagrangian $L(q, \dot q, S)$, the discrete Lagrangian in \eqref{construction_L_d} thus reads
\[
L_d( q _k , q_{k+1}, S _k , S_{k+1})= hL\left(q _k, \frac{q_{k+1}-q _k }{h} ,S_k \right)
\]
and the discrete phenomenological constraint \eqref{def_CKd} is given here by
\[
\frac{\partial L}{\partial S}( q_k , S _k ) \frac{S_{k+1}-S_k}{h}= F^{\rm fr} \left( q_k ,\frac{q_{k+1}-q_k}{h},  S _k\right)  \frac{q_{k+1}-q_k}{h}.
\]
The natural discretization of the forces $F^{\rm fr}$ and $F^{\rm ext}$ associated to this discretization of the Lagrangian may be given as follows (see \cite[\S3.2.5]{MaWe2001}):
\begin{align*} 
F^{\rm fr -}_d(q_k, q_{k+1}, S_k, S_{k+1})&= hF^{\rm fr} \left(q _k, \frac{q _{k+1} - q_k}{h}, S_k \right), \quad F^{\rm fr +}_d(q_k, q_{k+1}, S_k, S_{k+1})=0,\\
F^{\rm ext -}_d(q_k, q_{k+1}, S_k, S_{k+1})&= hF^{\rm ext} \left(q _k, \frac{q _{k+1} - q_k}{h}, S_k \right), \quad F^{\rm ext +}_d(q_k, q_{k+1}, S_k, S_{k+1})=0.
\end{align*} 
The first equation in \eqref{discrete_GBYo} thus becomes
\begin{align*} 
&\frac{1}{h} \left[ \frac{\partial L}{\partial v}\left(  q _k , \frac{q _{k+1} - q_k }{h}, S_k\right) -  \frac{\partial L}{\partial v}\left(  q _{k-1}  , \frac{q _k - q_{k-1} }{h}, S_{k-1}\right)\right]- \frac{\partial L}{\partial q}\left( q _k, \frac{q _{k+1} - q_k}{h}, S_k\right)\\[2mm]
& \qquad\qquad =F^{\rm fr} \left(q _k, \frac{q _{k+1} - q_k}{h}, S_k \right)+F^{\rm ext} \left(q _k, \frac{q _{k+1} - q_k}{h}, S_k \right).
\end{align*} 

For the standard Lagrangian
\begin{equation}\label{standard_L} 
L(q,v, S)= \frac{1}{2} m |v|^2 -U(q,S), 
\end{equation} 
where $v= \dot{q}$, we obtain the following numerical scheme:
\begin{framed}
\paragraph{Scheme 1:}
{\fontsize{9pt}{18pt}\selectfont
\[
\left\{ 
\begin{array}{l}
\vspace{0.2cm}  m \displaystyle\frac{q_{k+1}- 2q_k+q_{k-1}}{h ^2 } +\frac{\partial U}{\partial q}(q_k, S_k)= F^{\rm fr} \left(q _k, \frac{q _{k+1} - q_k}{h}, S_k \right)+F^{\rm ext} \left(q _k, \frac{q _{k+1} - q_k}{h}, S_k \right),\\[5mm]
\displaystyle\frac{\partial U}{\partial S}(q _k , S_k ) \frac{S_{k+1}-S_k}{h}= -F^{\rm fr} \left(q _k, \frac{q _{k+1} - q_k}{h}, S_k \right) \frac{q_{k+1}-q_k}{h}.
\end{array}
\right.
\]}
\end{framed}
\noindent This is an {\bfi extension of the Verlet scheme} to nonequilibrium thermodynamics. The matrix \eqref{regularity_criteria} for Scheme 1 has the entries:
\begin{equation}\label{Coeff_matrix1}
\left\{ 
\begin{array}{ll}
A _{11}= - \displaystyle \frac{m}{h}+ \frac{\partial F}{\partial v}, \quad F:=F^{\rm fr}+ F^{\rm ext},\;\;& A_{12}=0,\\[5mm]
A_{21}=  \displaystyle\frac{1}{h} \frac{\partial F^{\rm fr}}{\partial v} \frac{q _1 - q _0 }{h} +\frac{1}{h} F^{\rm fr},\;\; &
A_{22}=  \displaystyle\frac{1}{h}\frac{\partial U}{\partial S},
\end{array}
\right.
\end{equation}
where $F^{\rm ext}=F^{\rm ext}\left( q_{0}, \frac{q_{1}-q_{0}}{h}, S_{0}\right)$, $F^{\rm fr}=F^{\rm fr}\left(q_{0}, \frac{q_{1}-q_{0}}{h}, S_{0}\right)$ and $U=U(q_{0},S_{0})$. The regularity criteria \eqref{regularity_criteria} is thus satisfied if and only if
\[
\frac{\partial U}{\partial S}(q_0,S_0) \frac{1}{h}\neq 0\quad\text{and}\quad - \frac{m}{h}+\frac{\partial F^{\rm fr}}{\partial v} \left( q _0 ,\frac{q_1-q_0}{h}, S_0 \right)  \neq 0.
\]
The first condition is always satisfied under the physical assumption \eqref{physical_assumption}. The second condition is satisfied for all friction forces that are linear in velocity.

\paragraph{Variational scheme 2.} 
More generally, we can choose a finite difference map of the form
\[
\varphi( q _k , q_{k+1}, S _k , S_{k+1}) =\left( (1- \alpha ) q _k+  \alpha q_{k+1} , \frac{q_{k+1}-q _k }{h}, (1- \alpha ) S _k+  \alpha S_{k+1},\frac{S_{k+1}-S_k}{h} \right)
\]
for some parameter $ \alpha \in [0,1]$.
For $ \alpha = \frac{1}{2} $, we have
\[
L _d( q _k , q_{k+1}, S _k , S_{k+1}):= h L\left( \frac{q _k +  q _{k+1}}{2} , \frac{q _{k+1} - q_k }{h}, \frac{S _k +  S _{k+1}}{2}\right).
\]
The natural discretization of the force $F^{\rm fr}$ associated to this discretization of the Lagrangian is (see \cite[\S3.2.5]{MaWe2001})
\begin{equation*}\label{forces_2}
\begin{aligned} 
F^{\rm fr -}_d(q_k, q_{k+1}, S_k, S_{k+1})&= h \frac{1}{2} F^{\rm fr} \left(\frac{q _k+q_{k+1}}{2}, \frac{q _{k+1} - q_k}{h}, \frac{S _k+S_{k+1}}{2} \right)\\[2mm]
&= F^{\rm fr +}_d(q_k, q_{k+1}, S_k, S_{k+1}),
\end{aligned}
\end{equation*}  
similarly for $F^{\rm ext }$. The discrete phenomenological constraint \eqref{def_CKd} is given here by
{\fontsize{9pt}{18pt}\selectfont\[
\frac{\partial L}{\partial S} \left( \frac{q _k +  q _{k+1}}{2} ,  \frac{S _k +  S _{k+1}}{2} \right) \frac{S_{k+1}-S_k}{h}= F^{\rm fr} \left( \frac{q _k +  q _{k+1}}{2} , \frac{q _{k+1} - q_k }{h}, \frac{S _k +  S _{k+1}}{2}\right)  \frac{q_{k+1}-q_k}{h}.
\]}
The first equation in \eqref{discrete_GBYo} is
{\fontsize{9pt}{13pt}\selectfont
\begin{align*} 
&\frac{1}{h} \left[ \frac{\partial L}{\partial v}\left(  \frac{q _k + q _{k+1} }{2} , \frac{q _{k+1} - q_k }{h}, \frac{S _k +  S _{k+1}}{2}\right) -  \frac{\partial L}{\partial v}\left(  \frac{q _{k-1}+ q _k }{2}  , \frac{q _k - q_{k-1} }{h}, \frac{S _{k-1} +  S _k}{2}\right)\right]\\[3mm]
& \qquad -\frac{1}{2} \left[  \frac{\partial L}{\partial q}\left(  \frac{q _{k-1}+ q _k }{2}  , \frac{q _k - q_{k-1} }{h}, \frac{S _{k-1} +  S _k}{2}\right)+\frac{\partial L}{\partial q}\left( \frac{q _k + q _{k+1} }{2} , \frac{q _{k+1} - q_k }{h}, \frac{S _k +  S _{k+1}}{2}\right) \right]\\[3mm]
&=\frac{1}{2} F^{\rm fr} \left(\frac{q _k+q_{k+1}}{2}, \frac{q _{k+1} - q_k}{h}, \frac{S _k+S_{k+1}}{2} \right)+ \frac{1}{2} F^{\rm fr} \left(\frac{q _{k-1}+q_k}{2}, \frac{q _k - q_{k-1}}{h}, \frac{S _{k-1}+S_k}{2} \right)\\
&\qquad+ \frac{1}{2} F^{\rm ext} \left(\frac{q _k+q_{k+1}}{2}, \frac{q _{k+1} - q_k}{h}, \frac{S _k+S_{k+1}}{2} \right)+ \frac{1}{2} F^{\rm ext} \left(\frac{q _{k-1}+q_k}{2}, \frac{q _k - q_{k-1}}{h}, \frac{S _{k-1}+S_k}{2} \right).
\end{align*} }
The corresponding expressions for arbitrary $ \alpha \in [0,1]$ are derived similarly.

For the standard Lagrangian \eqref{standard_L} we obtain the following numerical scheme:
\begin{framed}
\paragraph{Scheme 2:}{\fontsize{9pt}{18pt}\selectfont
\[
\left\{ 
\begin{array}{l}
\vspace{0.2cm} m \displaystyle\frac{q_{k+1}- 2q_k+q_{k-1}}{h ^2 }+ \frac{1}{2}\left[ \frac{\partial U}{\partial q} \left( \frac{q_{k-1}+q_k}{2}, \frac{S_{k-1}+S_k}{2} \right) + \frac{\partial U}{\partial q} \left( \frac{q_k+q_{k+1}}{2}, \frac{S_k+S_{k+1}}{2} \right)  \right]\\[2mm]
\vspace{0.2cm} \quad =\displaystyle\frac{1}{2} F^{\rm fr} \left(\frac{q _k+q_{k+1}}{2}, \frac{q _{k+1} - q_k}{h}, \frac{S _k+S_{k+1}}{2} \right)+ \frac{1}{2} F^{\rm fr} \left(\frac{q _{k-1}+q_k}{2}, \frac{q _k - q_{k-1}}{h}, \frac{S _{k-1}+S_k}{2} \right)\\[2mm]
\vspace{0.2cm}\;\;+ \displaystyle\frac{1}{2} F^{\rm ext} \left(\frac{q _k+q_{k+1}}{2}, \frac{q _{k+1} - q_k}{h}, \frac{S _k+S_{k+1}}{2} \right)+ \frac{1}{2} F^{\rm ext} \left(\frac{q _{k-1}+q_k}{2}, \frac{q _k - q_{k-1}}{h}, \frac{S _{k-1}+S_k}{2} \right),\\[7mm]
\vspace{0.2cm}\displaystyle\frac{1}{2}\left[\frac{\partial U}{\partial S}\left( \frac{q_{k-1}+q_{k}}{2}, \frac{S_{k-1}+S_{k}}{2} \right)+\frac{\partial U}{\partial S}\left( \frac{q_k+q_{k+1}}{2}, \frac{S_k+S_{k+1}}{2} \right)\right]\frac{S_{k+1}-S_k}{h}\\[2mm]
\qquad\qquad= - F^{\rm fr} \left( \displaystyle\frac{q _k +  q _{k+1}}{2} , \frac{q _{k+1} - q_k }{h}, \frac{S _k +  S _{k+1}}{2}\right)  \displaystyle\frac{q_{k+1}-q_k}{h}.
\end{array}
\right.
\]}
\end{framed}
\noindent This is an extension to nonequilibrium thermodynamics of the {\bfi variational midpoint rule scheme}. 
The matrix \eqref{regularity_criteria} has the entries:
\begin{equation}\label{Coeff_matrix2}
\left\{ {\fontsize{9pt}{18pt}\selectfont
\begin{array}{l}
A _{11}= - \displaystyle\frac{m}{h}-\frac{h}{2}\frac{\partial ^2 U}{\partial  q^2 }+ \frac{h}{4} \frac{\partial F}{\partial q}+ \frac{1}{2} \frac{\partial F}{\partial v}, \quad F:=F^{\rm fr}+ F^{\rm ext},\\[5mm]
A_{12}= - \displaystyle\frac{h}{4} \frac{\partial ^2 U}{\partial S\partial q} + \frac{h}{4} \frac{\partial F}{\partial S},\\[5mm]
A_{21}= \displaystyle\frac{1}{2} \frac{\partial ^2 U}{\partial S\partial q}  \frac{S_1-S_0}{h}  + \left(\frac{1}{2} \frac{\partial F^{\rm fr}}{\partial q} + \frac{1}{h} \frac{\partial F^{\rm fr}}{\partial v} \right)\frac{q _1 - q _0 }{h} +\frac{1}{h} F^{\rm fr},\\[5mm]
A_{22}= \displaystyle\frac{1}{2} \frac{\partial ^2 U}{\partial  S^2} \frac{S_1-S_0}{h}  + \frac{1}{h}\frac{\partial U}{\partial S}+ \frac{1}{2} \frac{\partial F^{\rm fr}}{\partial S} \frac{q_1 - q_0}{h},
\end{array}}
\right.
\end{equation}
where 
\[
U=U\left( \frac{q_0+q_{1}}{2}, \frac{S_0+S_{1}}{2} \right)\;\;\textrm{and}\;\; F^{\rm fr}=F^{\rm fr} \left( \frac{q _0 +  q _{1}}{2} , \frac{q _{1} - q_0 }{h}, \frac{S _0 +  S _{1}}{2}\right).
\]

\paragraph{Variational scheme 3.} We can choose to approximate the Lagrangian by the symmetrized discrete Lagrangian as follows
\[
L_d( q _k , q _{k+1}, S_k, S_{k+1} )=\frac{1}{2} h L \left(q _k ,\frac{q_{k+1}-q_k}{h}, S_k \right)+\frac{1}{2} h L \left(q _{k+1} ,\frac{q_{k+1}-q_k}{h}, S_{k+1} \right). 
\]
The associated natural choice of discrete forces is given by
\begin{equation*}\label{forces_3}
\begin{aligned} 
F^{\rm fr -}_d(q_k, q_{k+1}, S_k, S_{k+1})&= h \frac{1}{2}  F^{\rm fr} \left(q_k, \frac{q _{k+1} - q_k}{h}, S_k \right), \\[3mm]
F^{\rm fr +}_d(q_k, q_{k+1}, S_k, S_{k+1})&=h \frac{1}{2} F^{\rm fr} \left(q_{k+1}, \frac{q _{k+1} - q_k}{h}, S_{k+1} \right),
\end{aligned}
\end{equation*}  
similarly for $F^{\rm ext \pm}$; see \cite[p. 29]{KaMaOrWe2000} ($ \alpha =0$). A natural discrete phenomenological constraint is given here by
\begin{align*} 
&\left[ \frac{\partial L}{\partial S}( q_k , S _k ) + \frac{\partial L}{\partial S}( q_{k+1} , S _{k+1} ) \right] \frac{S_{k+1}-S_k}{h}\\
&\qquad\qquad = \left[ F^{\rm fr} \left( q_k ,\frac{q_{k+1}-q_k}{h},  S _k\right)  + F^{\rm fr} \left( q_{k+1} ,\frac{q_{k+1}-q_k}{h},  S _{k+1}\right) \right] \frac{q_{k+1}-q_k}{h}.
\end{align*} 
\normalsize
The first equation in \eqref{discrete_GBYo} is
{\fontsize{9pt}{18pt}\selectfont
\begin{align*} 
&\frac{1}{h}\frac{1}{2} \left[ \frac{\partial L}{\partial v} \left( q _k , \frac{q _{k+1}-q_k}{h} \right) +  \frac{\partial L}{\partial v} \left( q _{k+1} , \frac{q _{k+1}-q_k}{h} \right) - \frac{\partial L}{\partial v} \left( q _{k-1} , \frac{q _k-q_{k-1}}{h} \right) -\frac{\partial L}{\partial v} \left( q _k , \frac{q _k-q_{k-1}}{h} \right) \right]  \\[3mm]
&  \qquad\qquad \qquad - \frac{1}{2}\left[\frac{\partial L}{\partial q} \left( q _k , \frac{q _k-q_{k-1}}{h} \right)+\frac{\partial L}{\partial q} \left( q _k , \frac{q _{k+1}-q_k}{h} \right)  \right]\\[3mm]
&= \frac{1}{2} \left(  F^{\rm fr} \left(q_k, \frac{q _{k+1} - q_k}{h}, S_k \right)+F^{\rm fr} \left(q_k, \frac{q _k - q_{k-1}}{h}, S_k \right)  \right) \\[3mm]
& \qquad\qquad \qquad + \frac{1}{2} \left(  F^{\rm ext} \left(q_k, \frac{q _{k+1} - q_k}{h}, S_k \right)+F^{\rm ext} \left(q_k, \frac{q _k - q_{k-1}}{h}, S_k \right)  \right).
\end{align*}}

For the standard Lagrangian \eqref{standard_L} we obtain the following numerical scheme:
\begin{framed}
\paragraph{Scheme 3:}
{\fontsize{9pt}{18pt}\selectfont
\[
\left\{ 
\begin{array}{l}
\vspace{0.2cm}m \displaystyle\frac{q_{k+1}- 2q_k+q_{k-1}}{h ^2 } +\frac{\partial U}{\partial q}(q_k, S_k)= \frac{1}{2} \left[  F^{\rm fr} \left(q_k, \frac{q _{k+1} - q_k}{h}, S_k \right)+F^{\rm fr} \left(q_k, \frac{q _k - q_{k-1}}{h}, S_k \right)  \right] \\[2mm]
\vspace{0.2cm} \qquad\qquad \qquad + \displaystyle\frac{1}{2} \left[  F^{\rm ext} \left(q_k, \displaystyle\frac{q _{k+1} - q_k}{h}, S_k \right)+F^{\rm ext} \left(q_k, \displaystyle\frac{q _k - q_{k-1}}{h}, S_k \right)  \right],\\[5mm]
\vspace{0.2cm} \left[ \displaystyle\frac{\partial U}{\partial S}( q_k , S _k ) + \frac{\partial U}{\partial S}( q_{k+1} , S _{k+1} ) \right] \displaystyle\frac{S_{k+1}-S_k}{h}\\[2mm]
\vspace{0.2cm} \qquad \qquad \qquad  = -\left[ F^{\rm fr} \left( q_k ,\displaystyle\frac{q_{k+1}-q_k}{h},  S _k\right)  + F^{\rm fr} \left( q_{k+1} ,\displaystyle\frac{q_{k+1}-q_k}{h},  S _{k+1}\right) \right] \displaystyle\frac{q_{k+1}-q_k}{h}.
\end{array}
\right.
\]}
\end{framed}

This is a {\bfi  symmetrized Lagrangian variational integrator} applied to nonequilibrium thermodynamics. The matrix \eqref{regularity_criteria} for Scheme 3 has the entries:
\begin{equation}\label{Coeff_matrix3}
\left\{ {\fontsize{9pt}{18pt}\selectfont
\begin{array}{l}
A _{11}= -  \displaystyle\frac{m}{h}+\frac{1}{2}\frac{\partial F_{0}}{\partial v}, \quad F_{0}:=F_{0}^{\rm fr}+ F_{0}^{\rm ext},\\[5mm]
A_{12}=\displaystyle\frac{1}{2}\frac{\partial F_{0}}{\partial S},\\[5mm]
A_{21}=  \displaystyle\frac{\partial^{2} U_{1}}{\partial S \partial q}\frac{S_{1}-S_{0}}{h}+ \left[\frac{1}{h}\frac{\partial F_{0}^{\rm fr}}{\partial v}
+\frac{1}{h}\frac{\partial F_{1}^{\rm fr}}{\partial v}+\frac{\partial F_{1}^{\rm fr}}{\partial q}
\right]\frac{q_{1}-q_{0}}{h}+\frac{F^{\rm fr}_{0}+F^{\rm fr}_{1}}{h},\\[5mm]
A_{22}=  \displaystyle\frac{\partial^{2} U_{1}}{\partial S^{2}}\frac{S_{1}-S_{0}}{h}+\left(\frac{\partial U_{0}}{\partial S}+\frac{\partial U_{1}}{\partial S}\right)\frac{1}{h}+\frac{\partial F_{1}^{\rm fr}}{\partial S}\frac{q_{1}-q_{0}}{h},
\end{array}}
\right.
\end{equation}
where $F_{0}=F\left( q_{0}, \frac{q_{1}-q_{0}}{h}, S_{0}\right)$,  $F^{\rm fr}_{0}=F^{\rm fr}\left(q_{0}, \frac{q_{1}-q_{0}}{h}, S_{0}\right)$, $F^{\rm fr}_{1}=F^{\rm fr}\left(q_{1}, \frac{q_{1}-q_{0}}{h}, S_{1}\right)$, $U_{0}=U(q_{0},S_{0})$ and $U_{1}=U(q_{1},S_{1})$.

\bigskip

%\subsection{Numerical tests and examples}

\subsection{Example: a mass-spring-friction system moving in an ideal gas}

We consider the example of a mass-spring-friction system moving in a closed room filled with an ideal gas. This system, denoted $ \boldsymbol{\Sigma} $, is illustrated in Fig.\ref{FrictionMassSpring}. We refer to \cite{FeGr2010} for the derivation of the equations of motion for this system from Stueckelberg's point of view.
We consider this simple example since we can take advantage of the fact that the equations of evolution for this system can be explicitly solved. 
This allows us to easily estimate the numerical validity of the scheme to simulate the entropy, temperature, and internal energy behaviors.
\medskip

\paragraph{Continuous setting.} The Lagrangian $L$ for the system $\boldsymbol{\Sigma}$ is given by $L(x,\dot x, S)=\frac{1}{2} m\dot x ^2 -U(x,S)$, where $(x,\dot{x},S)$ denotes the state of the system,
$m$ is the mass of the solid, and $k$ is the spring constant, and where $U(x, S)=\frac{1}{2} k x ^2+\mathcal{U}(S)$ is the potential energy.
The internal energy of the ideal gas is given by $\mathcal{U}=cN\!RT$, where $c$ is the gas constant, $N$ is the number of moles, $R$ is the universal gas constant, and $T$ is the temperature$^{2}$. Note that $\mathcal{U}$ may be rewritten as a function
\[
\mathcal{U}(S,N,V)= \mathcal{U} _0 e^ {\frac{1}{cR}\left(\frac{S}{N}- \frac{S _0 }{N _0 }\right)}\left( \frac{N}{N_0}\right) ^{ \frac{1}{c}+1} \left( \frac{V _0 }{V}\right) ^{\frac{1}{c}},
\]
where $\mathcal{U} _0$ indicates the initial value of the internal energy, $N_{0}$ is the initial mole number of the ideal gas, and $V$ is the volume of the room with the ideal gas, which is assumed to be constant, i.e., $V=V_{0}$. 
We assume that the friction force is given by a viscous force as $F^{\rm fr}(x, \dot x, S)= - \lambda  \dot x$, where $ \lambda \geq 0$ is the phenomenological coefficient determined experimentally and also that the system $\boldsymbol{\Sigma}$ is subject to an external force $F^{\rm ext}$ exerted from the exterior $\boldsymbol{\Sigma}^{\rm ext}$. We also assume that the system is adiabatically closed, so the power due to heat transfer between the system and the exterior is zero, i.e., $P^{\rm ext}_H=0$ and there is no change in the number of moles of the gas, i.e., $N=N_{0}$.

\addtocounter{footnote}{2}
\footnotetext{For simplicity, we neglect the internal energy of the solid. It is given by $\mathcal{U}_s=3N_sRT$, where $N_s$ is number of mole of the solid, and it can be easily included in our discussion.}

\begin{figure}[h!]
\begin{center}
%\hspace{2cm}
\includegraphics[scale=0.6]{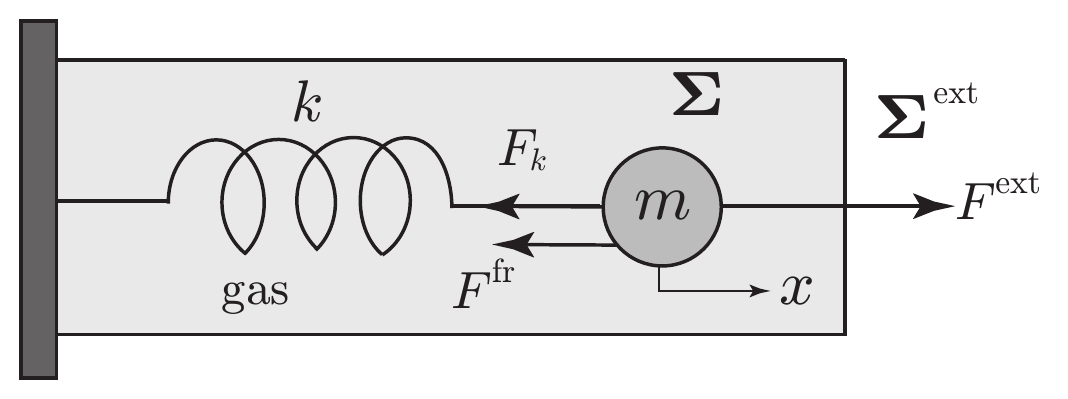}
\caption{ A mass-spring-friction system in a room with gas}
\label{FrictionMassSpring}
\end{center}
\end{figure}
With the above choice of Lagrangian and force, the general equations \eqref{thermo_mech_equations} (arising from the variational formulation \eqref{LdA_thermo}--\eqref{Kinematic_Constraints}) yield the time evolution equations of the coupled mechanical and thermal system as
\[
m\ddot x  =- kx  - \lambda \dot x+F^{\rm ext}(x , \dot x, S), \qquad \dot S= \frac{1}{T} \lambda \dot x^{2},
\]
where 
\[
T=\frac{\partial \mathcal{U}}{\partial S}(S)=\frac{\mathcal{U}_{0}}{cN_{0}R}e^{\frac{1}{cRN_{0}}(S-S_{0})}=T_{0}e^{\frac{1}{cRN_{0}}(S-S_{0})}.
\]
The total energy, given by $E(x, \dot x, S)= \frac{1}{2} m\dot x ^2 +  U(x,S)$, verifies the energy balance equation $ \frac{d}{dt} E= \left<F^{\rm ext}(x , \dot x, S), \dot{x}\right>$.

\paragraph{Exact solutions.}
Consider the special case in which there is no external force, i.e., $F^{\rm ext} =0$. In this case the time evolution equations are given by
\begin{equation}\label{equ_FeGr} 
m\ddot x=- kx- \lambda \dot x  \quad\text{and}\quad T\dot S= \lambda \dot x ^2,
\end{equation}
and the total energy is preserved. These equations can be easily solved explicitly, see \cite{FeGr2010}.
\medskip

Setting $x_{0}=x(0)$ and $v_{0}=\dot{x}(0)$, the solution of the first equation in \eqref{equ_FeGr} is
\begin{equation}\label{MSF_exact_x} 
x(t)= e^{- \kappa t} \left(x_0\cos( \omega t)+ \frac{v_0+\kappa x_0}{ \omega } \sin ( \omega t)\right),
\end{equation}
where 
$$
\kappa= \frac{ \lambda }{2m} ,\;\; \omega _0= \sqrt{ \frac{k}{m} },\quad \omega =\sqrt{ \omega _0 ^2 -\kappa ^2 }, \quad\kappa < \omega _0.
$$
In order to solve the second equation in \eqref{equ_FeGr}, we note that
\begin{equation}\label{note} 
\frac{d}{dt} \mathcal{U}= T \dot S= \lambda \dot x^2  \quad\text{and}\quad \frac{d}{dt} \mathcal{U}= cNR \dot T,
\end{equation} 
from which we obtain the evolution of the temperature as
\begin{equation}\label{MSF_exact_T}
T(t)=T(0)+ \frac{1}{cNR} f(t), \quad \textrm{where
$f(t):=  \lambda \int_0^t \dot x^2(s)ds.$}
\end{equation}
In the above,
\[
\dot x(t)= e^{- \kappa t} \left(v_0\cos( \omega t)-  \left( \kappa \frac{v_0+\kappa x_0}{ \omega }+ x_0 \omega \right)  \sin ( \omega t)\right)
\]
and hence
\begin{align*} 
f(t)&=\lambda \int_0^t \dot x^2(s)\\[2mm]
&=\left(  \frac{1}{2} m v_0 ^2 + \frac{1}{2} k x_0 ^2  \right) - \frac{1}{2(4km- \lambda ^2 )}e^{ -\frac{ \lambda }{m}t} \Big( 4km (mv_0 ^2 + \lambda v_0x_0+kx_0^2 ) \\[2mm]
&\qquad - \lambda (v_0 ^2 \lambda m+ 4 v_0 m k x_0+ \lambda k x_0 ^2 ) \cos (2 \omega t)- \lambda (mv_0 ^2 -k x_0 ^2 )( 4km - \lambda ^2 )^{1/2} \sin( 2 \omega t) \Big).
\end{align*} 
Using \eqref{MSF_exact_T} and  \eqref{note}, we get the explicit evolution of the entropy as
 \begin{equation}\label{MSF_exact_S}
S(t)= S(0)+ cNR \ln \left(\frac{T(t)}{T(0)} \right). 
\end{equation}

\paragraph{Variational discretizations.} We now apply our variational discretization schemes 1, 2, 3  to this example.
\paragraph{\bf \underline{Scheme 1:}} The first scheme yields
\[
\left\{ 
\begin{array}{l}
\vspace{0.2cm}\displaystyle m \frac{x_{k+1}- 2x_k+x_{k-1}}{h ^2 } +kx_k= -\lambda  \frac{x _{k+1} - x_k}{h},\\
\displaystyle\frac{\partial \mathcal{U}}{\partial S}(S_k ) \frac{S_{k+1}-S_k}{h}= \lambda \left(\frac{x _{k+1} - x_k}{h}\right)^{2}.
\end{array}
\right.
\]
The second equation can be restated as
\[
S_{k+1}=\frac{h\lambda}{T_k}\left(\displaystyle\frac{x_{k+1}-x_{k}}{h}\right)^{2}+S_{k},\quad 
\]
where
\[
T_{k}=\frac{\partial \mathcal{U}}{\partial S}(S_{k})=\frac{\mathcal{U}_{0}}{cN_{0}R}e^{\frac{1}{cRN_{0}}(S_{k}-S_{0})}=T_{0}e^{\frac{1}{cRN_{0}}(S_{k}-S_{0})}.
\]

For this example, the matrix \eqref{Coeff_matrix1} of the variational discretization scheme 1 reads
\begin{equation*}\label{regularity_criteria_matrix1} 
\left[
\begin{array}{cc}
-\displaystyle\frac{m}{h}-\lambda & 0 \\[3mm]
0 & \displaystyle\frac{1}{h}T(x_{0},S_{0}) 
\end{array}
\right].
\end{equation*} 
Thus, since $m>0$, $\lambda \geq 0$, and $T>0$,  the discrete flow of the extended Verlet scheme is well-defined.

\paragraph{\underline{Scheme 2:}} The second scheme yields
{\fontsize{9pt}{18pt}\selectfont
\[
\left\{ 
\begin{array}{l}
\vspace{0.2cm}\displaystyle m \frac{x_{k+1}- 2x_k+x_{k-1}}{h ^2 } +\frac{1}{2}k\left(\frac{x_{k-1}+x_{k}}{2}+\frac{x_{k}+x_{k+1}}{2} \right)%\\\hspace{6cm}
\displaystyle = -\frac{1}{2} \lambda\left( \frac{x_{k+1} - x_k}{h}+\frac{x _{k} - x_{k-1}}{h}\right),\\[5mm]
\displaystyle
{\frac{1}{2}}\left[\frac{\partial \mathcal{U}}{\partial S}\left( \frac{S_{k-1}+S_{k}}{2} \right)+\frac{\partial \mathcal{U}}{\partial S}\left( \displaystyle\frac{S_k+S_{k+1}}{2} \right)\right]
\frac{S_{k+1}-S_k}{h}= \lambda \left(\frac{x_{k+1} - x_k}{h}\right)^{2}.
\end{array}
\right.
\]}

For this example, the matrix \eqref{Coeff_matrix2} of the variational discretization scheme 2 reads
\begin{equation*}\label{regularity_criteria_matrix2} 
\left[
\begin{array}{ll}
-\displaystyle\frac{m}{h}-\frac{\lambda}{2} & 0 \\[3mm]
-\displaystyle\frac{2\lambda}{h^{2}}\left(x_{1}-x_{0}\right) &\displaystyle\frac{T}{h}\left(\frac{S_{1}-S_{0}}{2cN_{0}R}+1\right)
\end{array}
\right],
\end{equation*}
where $T=\frac{\partial  \mathcal{U}}{\partial S}\left(\frac{S_{0}+S_{1}}{2} \right)$. 
Thus, since $m>0$, $\lambda \geq 0$, $T>0$, and $cN_{0}R>0$,  the discrete flow of the variational midpoint rule scheme is well-defined.

\vspace{-1cm}

\paragraph{\underline{Scheme 3:}} The third example yields
\[
\left\{ 
\begin{array}{l}
\vspace{0.2cm}\displaystyle m \frac{x_{k+1}- 2x_k+x_{k-1}}{h ^2 } +kx_{k}= -\frac{1}{2} \lambda\left( \frac{x _{k+1} - x_k}{h}+\frac{x _{k} - x_{k-1}}{h}\right),\\[2mm]
\displaystyle \frac{1}{2}\left[\frac{\partial  \mathcal{U}}{\partial S}(S_{k})+ \frac{\partial  \mathcal{U}}{\partial S}(S_{k+1})\right]\frac{S_{k+1}-S_k}{h}= \lambda \left(\frac{x_{k+1} - x_k}{h}\right)^{2}.
\end{array}
\right.
\]

For this example, the matrix \eqref{Coeff_matrix3} of the variational discretization scheme 3 reads
\begin{equation*}\label{regularity_criteria_matrix3} 
\left[
\begin{array}{ll}
-\displaystyle \frac{m}{h}-\frac{\lambda}{2} & 0 \\[5mm]
-\displaystyle \frac{4\lambda}{h^{2}}\left(x_{1}-x_{0}\right) & \displaystyle \frac{T_{1}}{cN_{0}R}\left(\displaystyle \frac{S_{1}-S_{0}}{h}\right)+(T_{0}+T_{1})\displaystyle \frac{1}{h} 
\end{array}
\right],
\end{equation*}
where $T_{0}= \frac{\partial \mathcal{U}}{\partial S}(S_{0})$ and $T_{1}= \frac{\partial \mathcal{U}}{\partial S}(S_{1})$.
Thus, since $m>0$, $\lambda \geq 0$, $T_1>0$, $T_2>0$ and $cN_{0}R>0$,  the discrete flow of the symmetrized Lagrangian variational integrator is well-defined.

\subsection{Numerical tests}

We illustrate the behavior of the three variational schemes for the mass-spring-friction system, by considering two cases of physical parameters and various values for the friction coefficient, namely $ \lambda =0$, $0.2$, $5$, and $10$ ${\rm[N\!\cdot\! s/m]}$.

For each of the five values of $ \lambda $ we display the evolutions of the position, entropy, total energy $E( q _k , q_{k+1}, S _k )$, relative energy errors $\bigr| \frac{ E( q _k , q_{k+1}, S _k ) - E_{0} }{E_{0}}\bigr|$, internal energy, and temperature. Each figure shows the results for the three schemes as well as the exact solution, through $10^{5}$ time steps.

\paragraph{Case 1.} For the first series of numerical tests, we choose the time step $h=10^{-3}{\rm [s]}$ and we set the parameters of the system $\boldsymbol{\Sigma}$ as follows: $m=5\,  {\rm[kg]}$, $N=1\,  {\rm [mol]}$,  $k= 5\, {\rm[N/m]}$, $V=2.494 \times 10^{-2}\,  {\rm [m^{3}]}$. The initial conditions are $x_{0}=0.3\, {\rm [m]}$, $x_{1}=0.3\, {\rm [m]}$, $T_{0}=300\, {\rm [K]}$, $S_{0}=0\, {\rm [J/K]}$.
\medskip

The results for $ \lambda =0$, see Figures \ref{graph_position_entropy_Case1Lamda0} and \ref{graph_energy_errors_Case1Lamda0}, consistently recover the behavior obtained through a usual variational discretization of the Euler-Lagrange equations for the conservative mass-spring system in classical mechanics. In particular, for each scheme the internal energy $\mathcal{U}(S_{k})=\mathcal{U}_{0}$ is preserved and the temperature, given by $T_{k}=\frac{\partial \mathcal{U}}{\partial S}(S_{k})$,  remains a constant, see Figure \ref{graph_interenergy_temp_Case1Lamda0}. Exactly as in the continuous case, in absence of friction in an isolated simple system, the entropy and temperature stay constant, the system is reversible,  and the dynamics is completely described by the Euler-Lagrange equations.

For all the cases with friction, $ \lambda =0.2$, $5$, $10$, the numerical solutions of the position, entropy, internal energy, and temperature reproduce the correct behaviors for all the three schemes, as we see from a direct comparison with the exact solutions, see Figures \ref{graph_position_entropy_Case1Lamda0.2}, \ref{graph_interenergy_temp_Case1Lamda0.2}, \ref{graph_position_entropy_Case1Lamda5}, \ref{graph_interenergy_temp_Case1Lamda5}, \ref{graph_position_entropy_Case1Lamda10}, \ref{graph_interenergy_temp_Case1Lamda10}.

For Scheme 1, the relative energy error is bounded by $10^{-8}$ for all values of $ \lambda $, and decreases in time, whereas for Scheme 2 and 3, the relative energy error is bounded by $10^{-11}$ for all values of $ \lambda $, see Figures \ref{graph_energy_errors_Case1Lamda0.2}, \ref{graph_energy_errors_Case1Lamda5}, \ref{graph_energy_errors_Case1Lamda10}, and stays constant in time.

\pagebreak

%%%%%%%%%%%%%%%%%%%%%%%%%%%%%%%%%%%%%%%%%%%%%%%%%%%%%%
%                                   Case1: lambda=0
%%%%%%%%%%%%%%%%%%%%%%%%%%%%%%%%%%%%%%%%%%%%%%%%%%%%%%
\begin{figure}[htbp]
 \vspace{-4cm} \begin{center}
    \begin{tabular}{c}
      % 1
      \begin{minipage}{0.4\hsize}
        \begin{center}
          \includegraphics[clip, width=5cm]{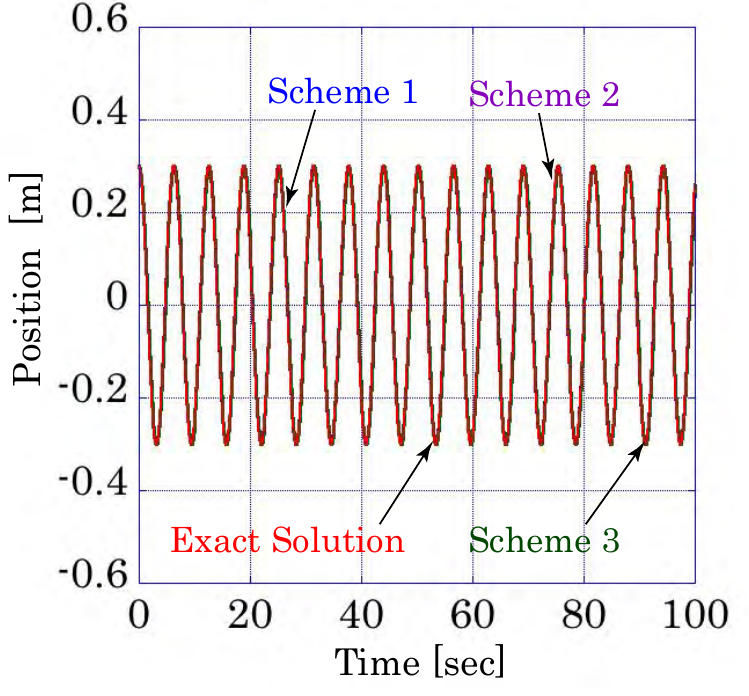}
%          \hspace{1.6cm} {\small (a)\;Position}
        \end{center}
      \end{minipage}
      \qquad
      % 2
      \begin{minipage}{0.4\hsize}
        \vspace{-0.2cm}\begin{center}
          \includegraphics[clip, width=5.2cm]{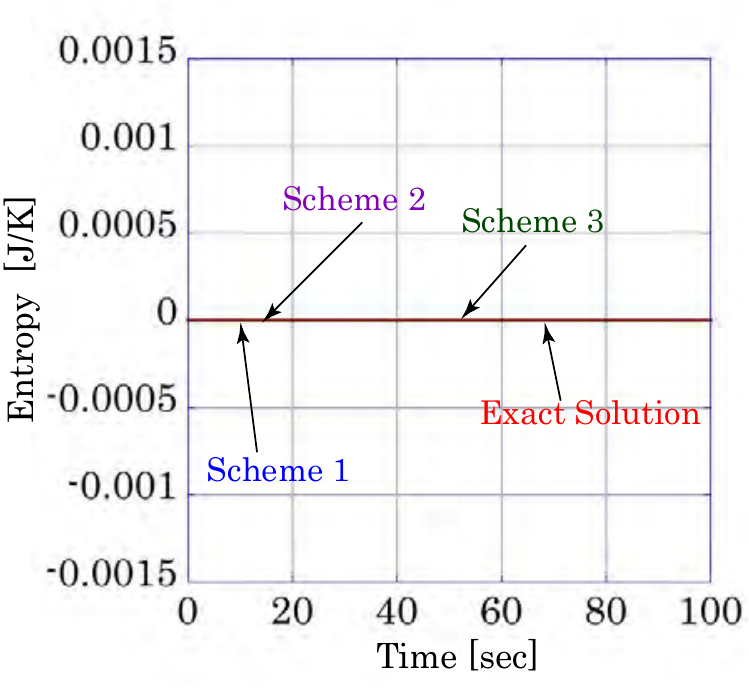}
%          \hspace{1.8cm} {\small  (b)\; Entropy}
        \end{center}
      \end{minipage}
    \end{tabular}
   \caption{Time evolutions of position and entropy (Case 1: $\lambda=0$)}
    \label{graph_position_entropy_Case1Lamda0}
  \end{center}
\end{figure}
%%%%%%%%%%%%%%%%%%%%%%%%%%%%%%%%%%%%%%%%%%%%%%%%%%%%%%
\begin{figure}[htbp]
  \begin{center}
    \begin{tabular}{c}
      % 1
      \begin{minipage}{0.4\hsize}
        \begin{center}
          \includegraphics[clip, width=5.4cm]{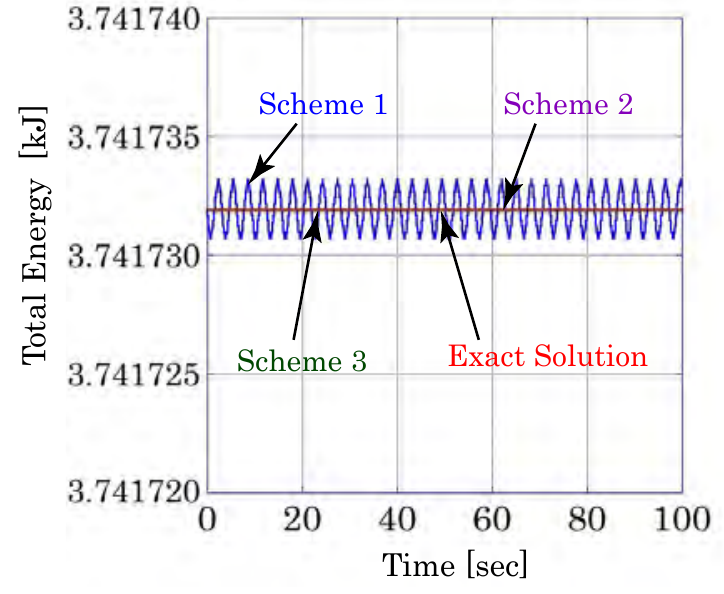}
%          \hspace{1.6cm} {\small (a)\;Position}
        \end{center}
      \end{minipage}
      \qquad
      % 2
      \begin{minipage}{0.4\hsize}
        \vspace{-0.1cm}\begin{center}
          \includegraphics[clip, width=5cm]{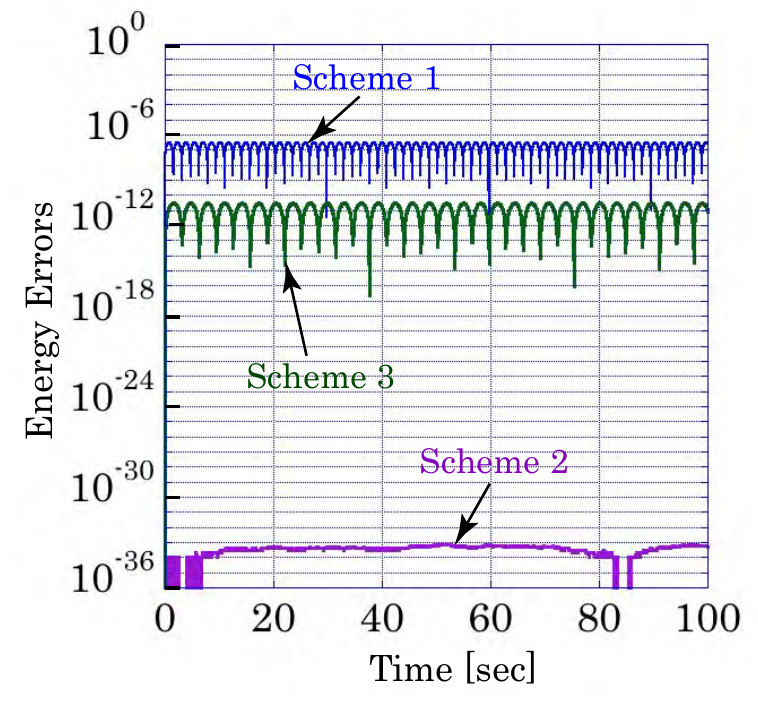}
%          \hspace{1.8cm} {\small  (b)\; Entropy}
        \end{center}
      \end{minipage}
    \end{tabular}
   \caption{Total energy and relative energy error (Case 1: $\lambda=0$)}
    \label{graph_energy_errors_Case1Lamda0}
  \end{center}
\end{figure}
 %%%%%%%%%%%%%%%%%%%%%%%%%%%%%%%%%%%%%%%%%%%%%%%%%%%%%%
\begin{figure}[htbp]
  \begin{center}
    \begin{tabular}{c}
      % 1
      \begin{minipage}{0.4\hsize}
        \vspace{-0.1cm}\begin{center}
          \includegraphics[clip, width=5.4cm]{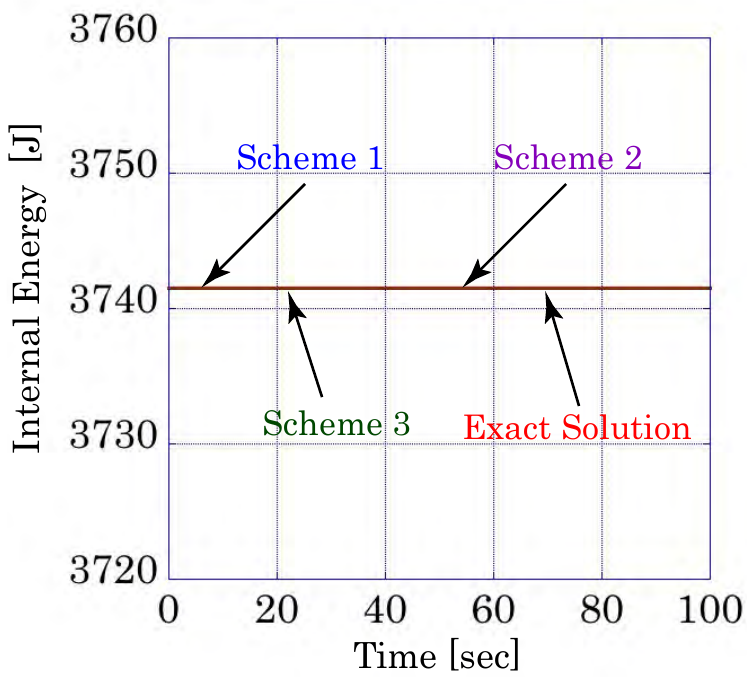}
%          \hspace{1.6cm} {\small (a)\;Position}
        \end{center}
      \end{minipage}
      \qquad
      % 2
      \begin{minipage}{0.4\hsize}
        \begin{center}
          \includegraphics[clip, width=5.4cm]{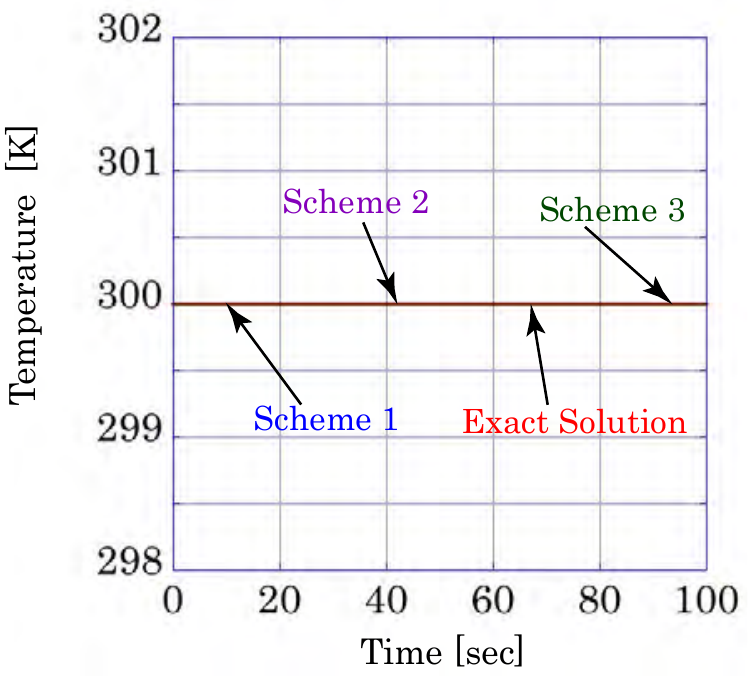}
%          \hspace{1.8cm} {\small  (b)\; Entropy}
        \end{center}
      \end{minipage}
    \end{tabular}
   \caption{Internal energy and temperature (Case 1: $\lambda=0$)}
    \label{graph_interenergy_temp_Case1Lamda0}
  \end{center}
\end{figure}

%%%%%%%%%%%%%%%%%%%%

%%%%%%%%%%%%%%%%%%%%%%%%%%%%%%%%%%%%%%%%%%%%%%%%%%%%%%
%                                   Case1: lambda=0.2
%%%%%%%%%%%%%%%%%%%%%%%%%%%%%%%%%%%%%%%%%%%%%%%%%%%%%%

%%%%%%%%%%%%%%%%%%%%%%%%%%%%%%%%%%%%%%%%%%%%%%%%%%%%%%
 \begin{figure}[htbp]
  \begin{center}
    \begin{tabular}{c}

      % 1
      \begin{minipage}{0.4\hsize}
        \begin{center}
          \includegraphics[clip, width=4.9cm]{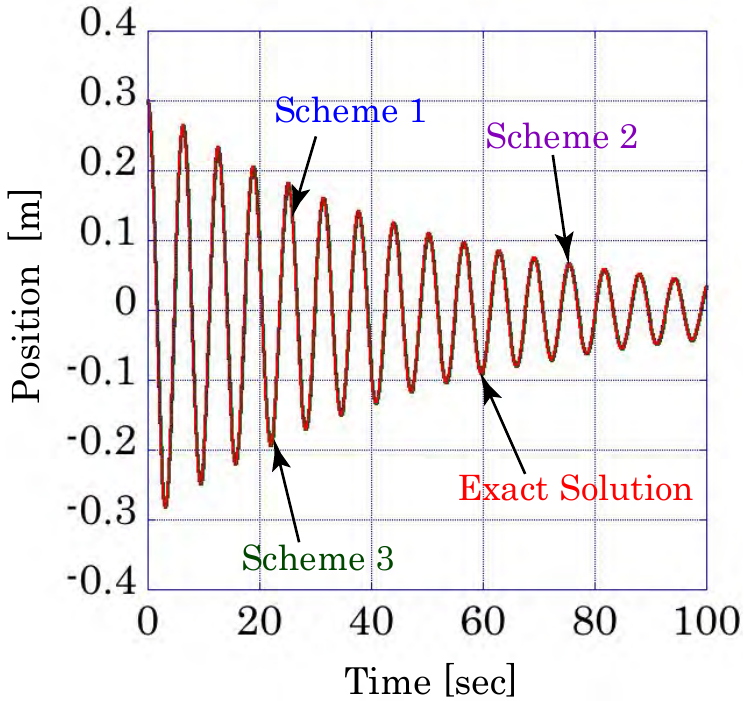}
%          \hspace{1.6cm} {\small (a)\;Position}
        \end{center}
      \end{minipage}
      \qquad
      % 2
      \begin{minipage}{0.4\hsize}
        \begin{center}
          \includegraphics[clip, width=5.4cm]{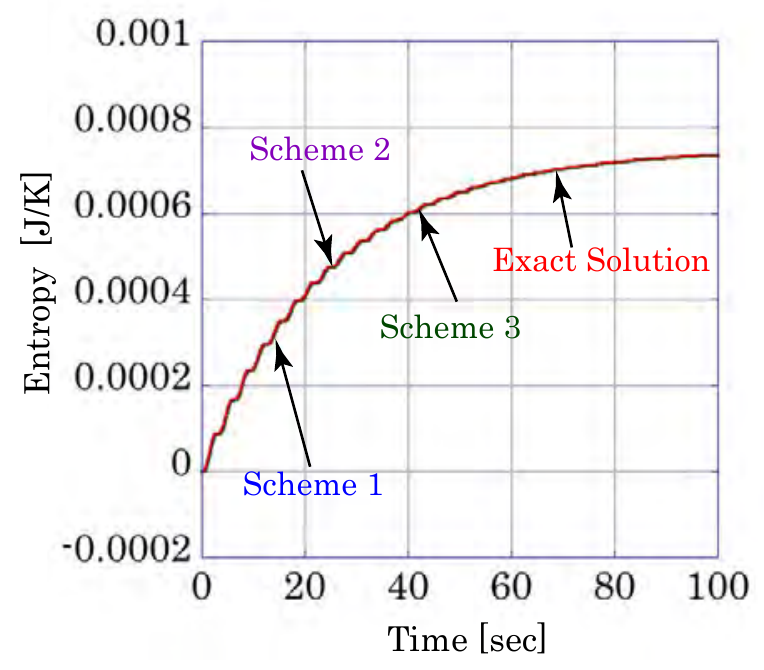}
%          \hspace{1.8cm} {\small  (b)\; Entropy}
        \end{center}
      \end{minipage}
    \end{tabular}
   \caption{Time evolutions of position and entropy (Case 1: $\lambda=0.2$)}
    \label{graph_position_entropy_Case1Lamda0.2}
  \end{center}
\end{figure}

%%%%%%%%%%%%%%%%%%%%%%%%%%%%%%%%%%%%%%%%%%%%%%%%%%%%%%
\begin{figure}[htbp]
  \begin{center}
    \begin{tabular}{c}

      % 1
      \begin{minipage}{0.4\hsize}
        \begin{center}
          \includegraphics[clip, width=5.4cm]{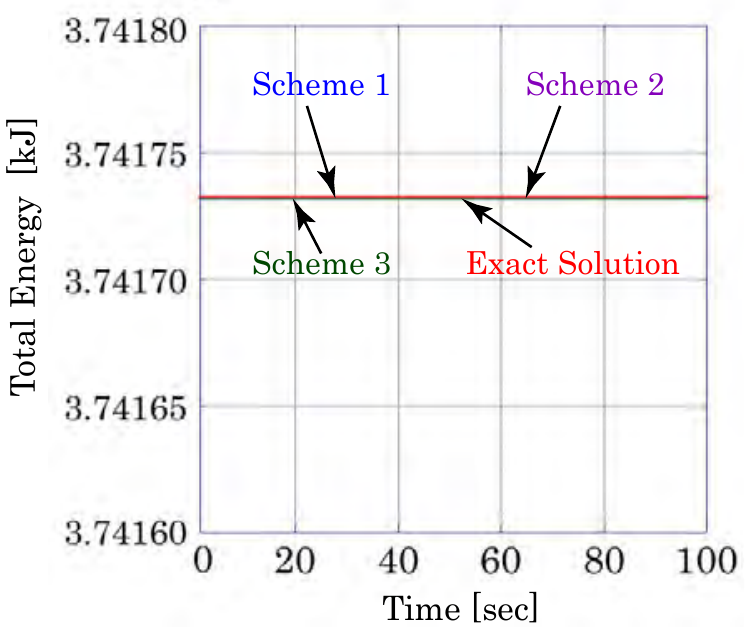}
%          \hspace{1.6cm} {\small (a)\;Position}
        \end{center}
      \end{minipage}
      \qquad
      % 2
      \begin{minipage}{0.4\hsize}
        \begin{center}
          \includegraphics[clip, width=5.0cm]{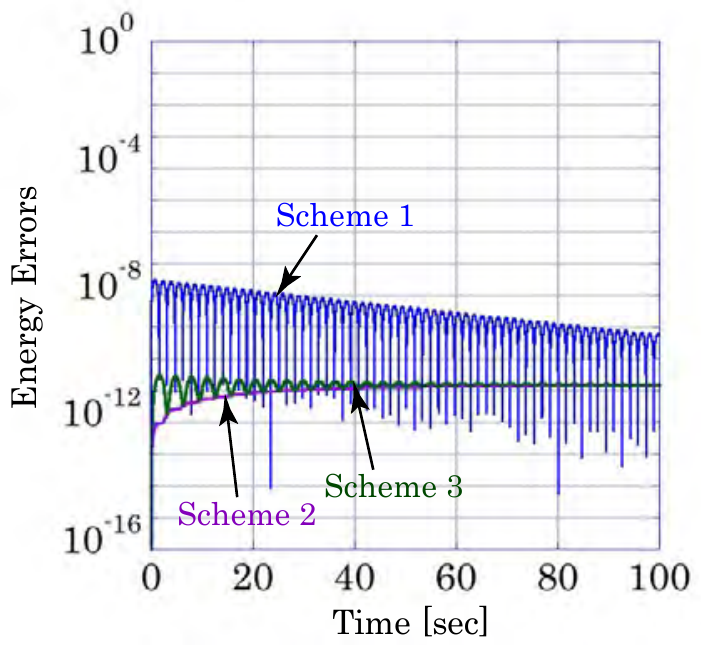}
%          \hspace{1.8cm} {\small  (b)\; Entropy}
        \end{center}
      \end{minipage}
    \end{tabular}
   \caption{Total energy and relative energy error (Case 1: $\lambda=0.2$)}
    \label{graph_energy_errors_Case1Lamda0.2}
  \end{center}
\end{figure}

 %%%%%%%%%%%%%%%%%%%%%%%%%%%%%%%%%%%%%%%%%%%%%%%%%%%%%%
\begin{figure}[htbp]
  \begin{center}
    \begin{tabular}{c}

      % 1
      \begin{minipage}{0.4\hsize}
        \begin{center}
          \includegraphics[clip, width=5.3cm]{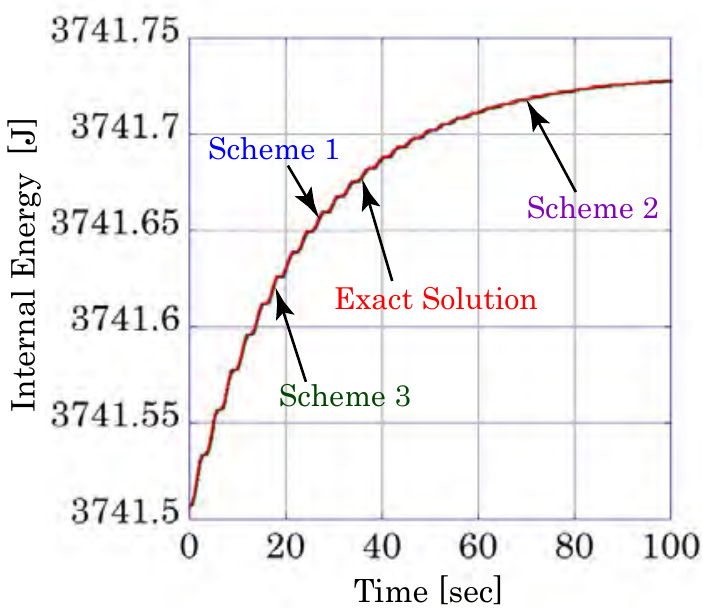}
%          \hspace{1.6cm} {\small (a)\;Position}
        \end{center}
      \end{minipage}
      \qquad
      % 2
      \begin{minipage}{0.4\hsize}
        \begin{center}
          \includegraphics[clip, width=5.4cm]{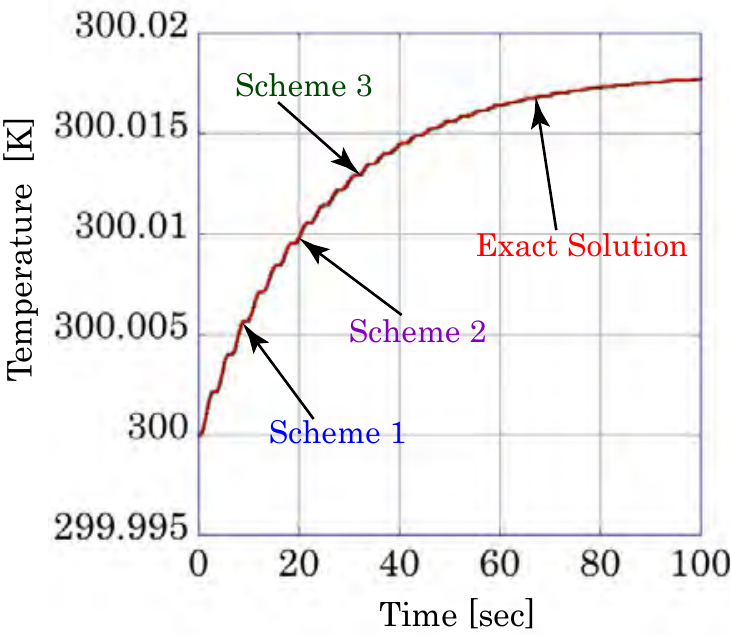}
%          \hspace{1.8cm} {\small  (b)\; Entropy}
        \end{center}
      \end{minipage}
    \end{tabular}
   \caption{Internal energy and temperature (Case 1: $\lambda=0.2$)}
    \label{graph_interenergy_temp_Case1Lamda0.2}
  \end{center}
\end{figure}

 \begin{figure}[htbp]
  \begin{center}
    \begin{tabular}{c}
      % 1
      \begin{minipage}{0.4\hsize}
        \begin{center}
          \includegraphics[clip, width=5.5cm]{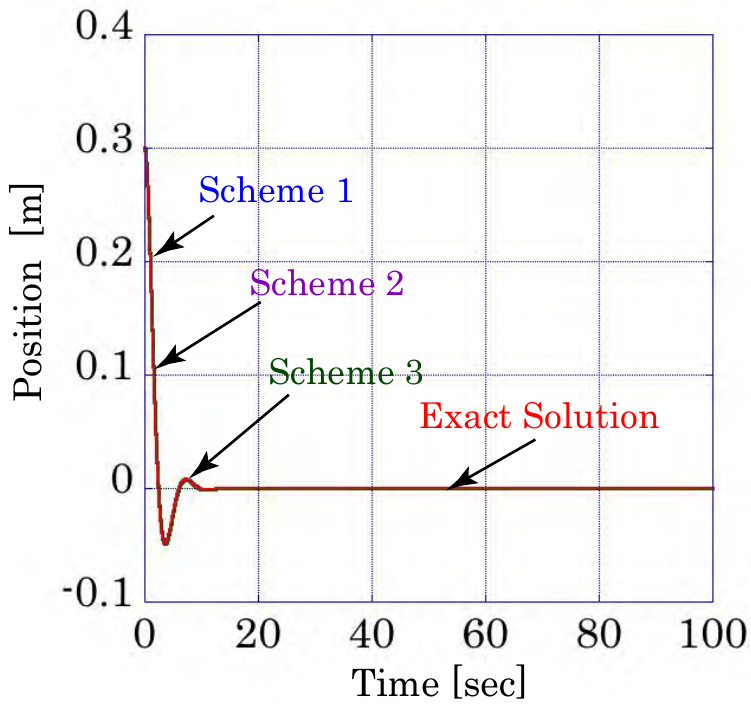}
%          \hspace{1.6cm} {\small (a)\;Position}
        \end{center}
      \end{minipage}
      \qquad
      % 2
      \begin{minipage}{0.4\hsize}
        \begin{center}
          \includegraphics[clip, width=6cm]{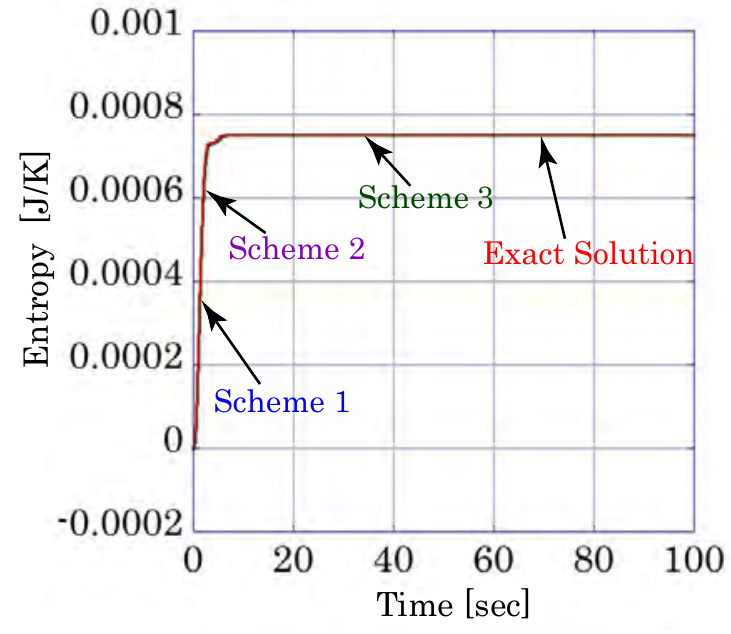}
%          \hspace{1.8cm} {\small  (b)\; Entropy}
        \end{center}
      \end{minipage}
    \end{tabular}
   \caption{Time evolutions of position and entropy (Case 1: $\lambda=5$)}
    \label{graph_position_entropy_Case1Lamda5}
  \end{center}
\end{figure}
%%%%%%%%%%%%%%%%%%%%%%%%%%%%%%%%%%%%%%%%%%%%%%%%%%%%%%
\begin{figure}[htbp]
  \begin{center}
    \begin{tabular}{c}

      % 1
      \begin{minipage}{0.4\hsize}
        \begin{center}
          \includegraphics[clip, width=6.0cm]{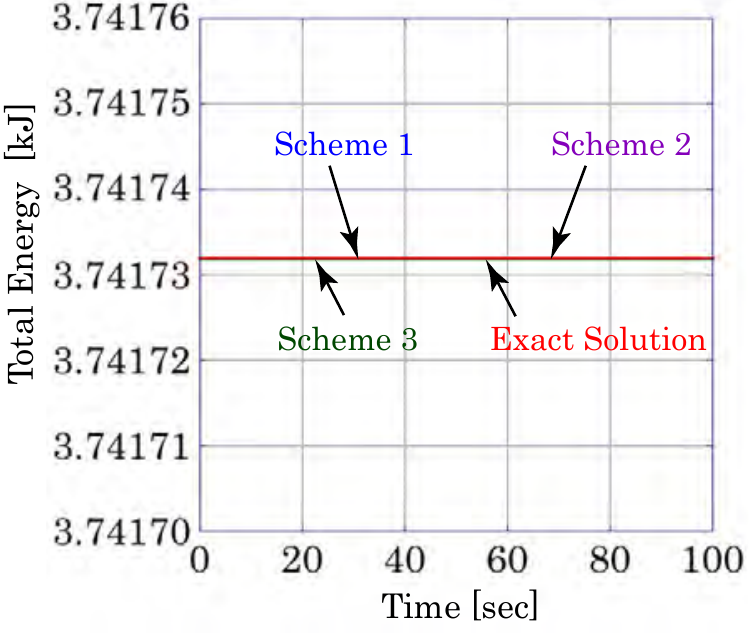}
%          \hspace{1.6cm} {\small (a)\;Position}
        \end{center}
      \end{minipage}
      \qquad
      % 2
      \begin{minipage}{0.4\hsize}
        \vspace{-0.1cm}\begin{center}
          \includegraphics[clip, width=5.7cm]{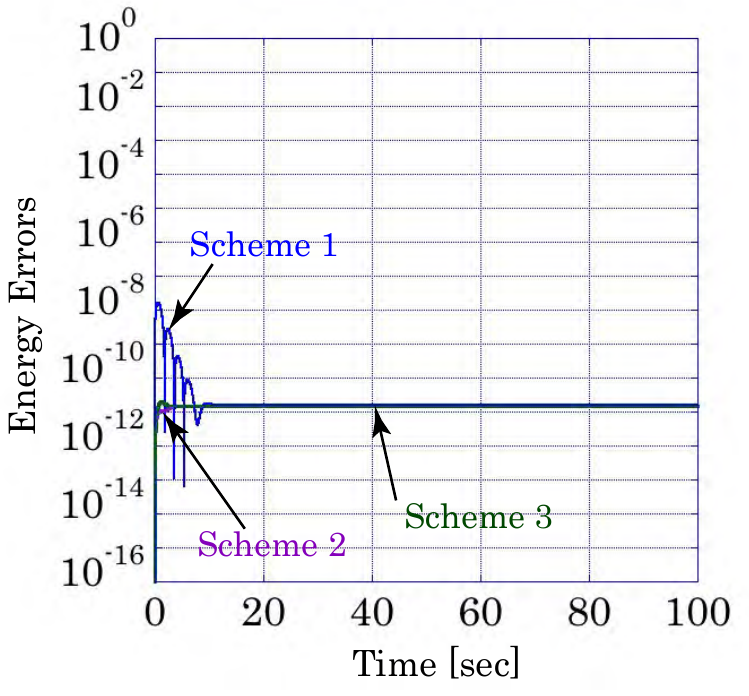}
%          \hspace{1.8cm} {\small  (b)\; Entropy}
        \end{center}
      \end{minipage}
    \end{tabular}
   \caption{Total energy and relative energy error (Case 1: $\lambda=5$)}
    \label{graph_energy_errors_Case1Lamda5}
  \end{center}
\end{figure}

 %%%%%%%%%%%%%%%%%%%%%%%%%%%%%%%%%%%%%%%%%%%%%%%%%%%%%%
\begin{figure}[htbp]
  \begin{center}
    \begin{tabular}{c}

      % 1
      \begin{minipage}{0.4\hsize}
        \begin{center}
          \includegraphics[clip, width=5.6cm]{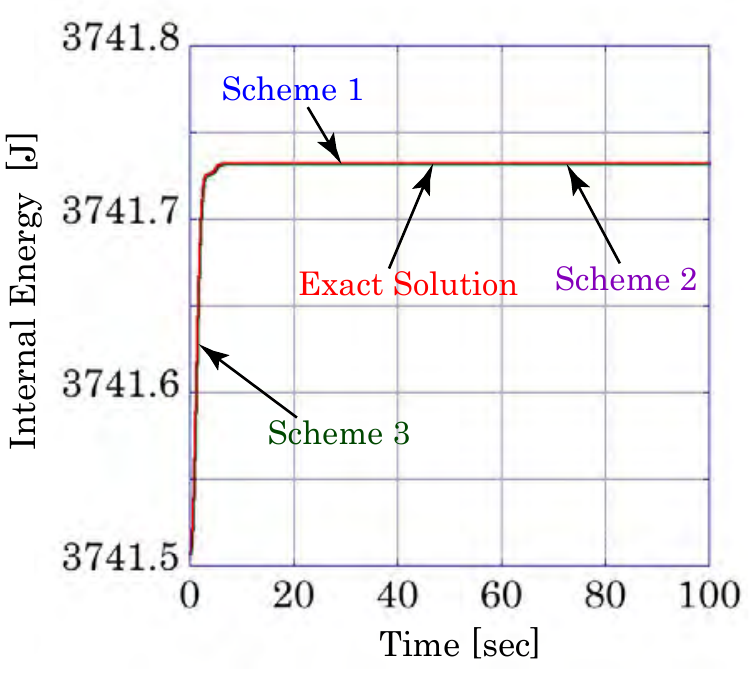}
%          \hspace{1.6cm} {\small (a)\;Position}
        \end{center}
      \end{minipage}
      \qquad
      % 2
      \begin{minipage}{0.4\hsize}
        \begin{center}
          \includegraphics[clip, width=5.8cm]{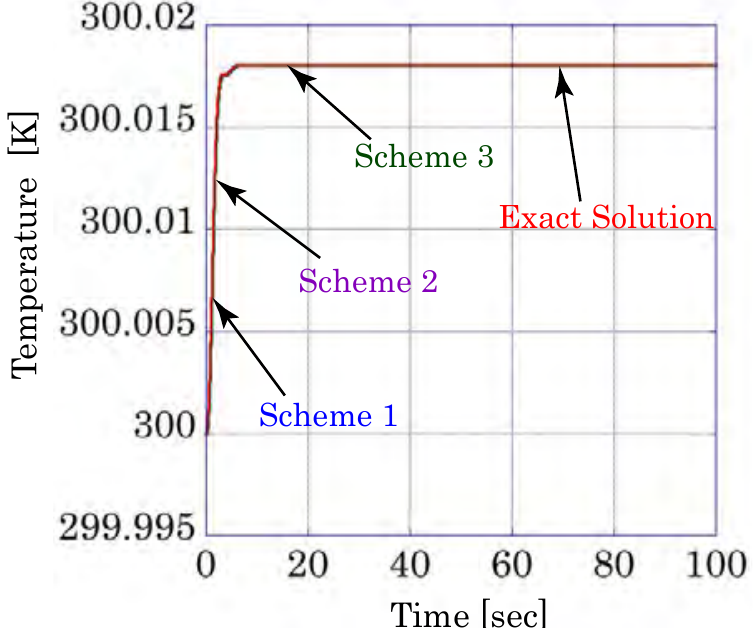}
%          \hspace{1.8cm} {\small  (b)\; Entropy}
        \end{center}
      \end{minipage}
    \end{tabular}
   \caption{Internal energy and temperature (Case 1: $\lambda=5$)}
    \label{graph_interenergy_temp_Case1Lamda5}
  \end{center}
\end{figure}

%%%%%%%%%%%%%%%%%%%%%%%%%%%%%%%%%%%%%%%%%%%%%%%%%%%%%%
%                                   Case1: lambda=10
%%%%%%%%%%%%%%%%%%%%%%%%%%%%%%%%%%%%%%%%%%%%%%%%%%%%%%

%%%%%%%%%%%%%%%%%%%%%%%%%%%%%%%%%%%%%%%%%%%%%%%%%%%%%%
 \begin{figure}[htbp]
  \begin{center}
    \begin{tabular}{c}
      % 1
      \begin{minipage}{0.4\hsize}
        \vspace{-0.1cm}\begin{center}
          \includegraphics[clip, width=5.2cm]{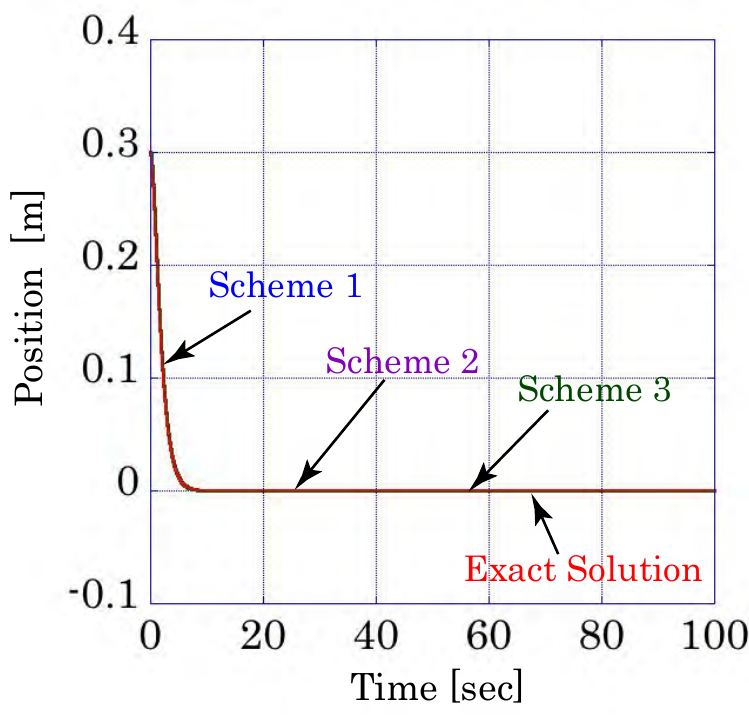}
%          \hspace{1.6cm} {\small (a)\;Position}
        \end{center}
      \end{minipage}
      \qquad
      % 2
      \begin{minipage}{0.4\hsize}
        \begin{center}
          \includegraphics[clip, width=5.7cm]{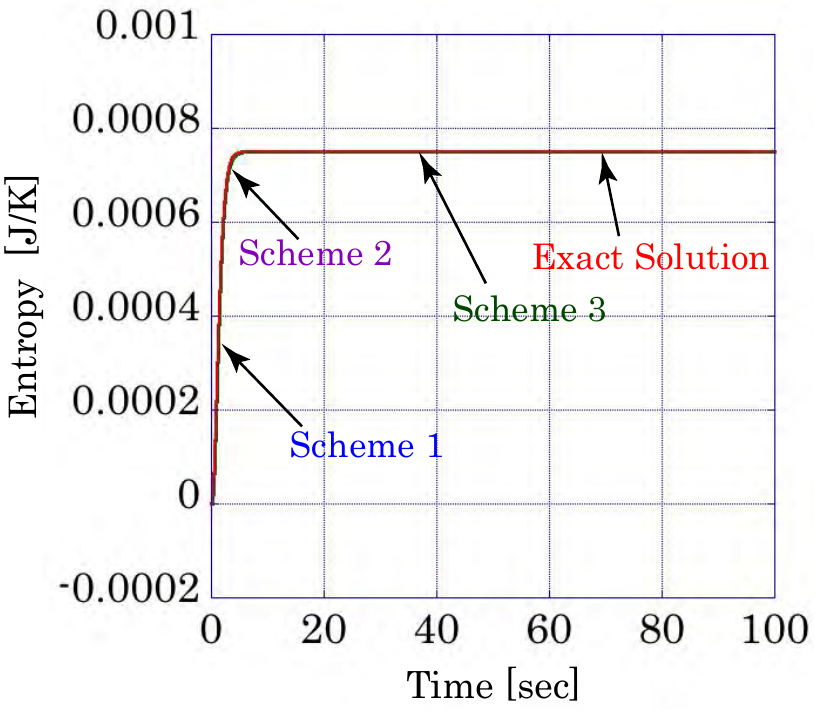}
%          \hspace{1.8cm} {\small  (b)\; Entropy}
        \end{center}
      \end{minipage}
    \end{tabular}
   \caption{Time evolutions of position and entropy (Case 1: $\lambda=10$)}
    \label{graph_position_entropy_Case1Lamda10}
  \end{center}
\end{figure}
%%%%%%%%%%%%%%%%%%%%%%%%%%%%%%%%%%%%%%%%%%%%%%%%%%%%%%
\begin{figure}[htbp]
  \begin{center}
    \begin{tabular}{c}

      % 1
      \begin{minipage}{0.4\hsize}
        \begin{center}
          \includegraphics[clip, width=6.0cm]{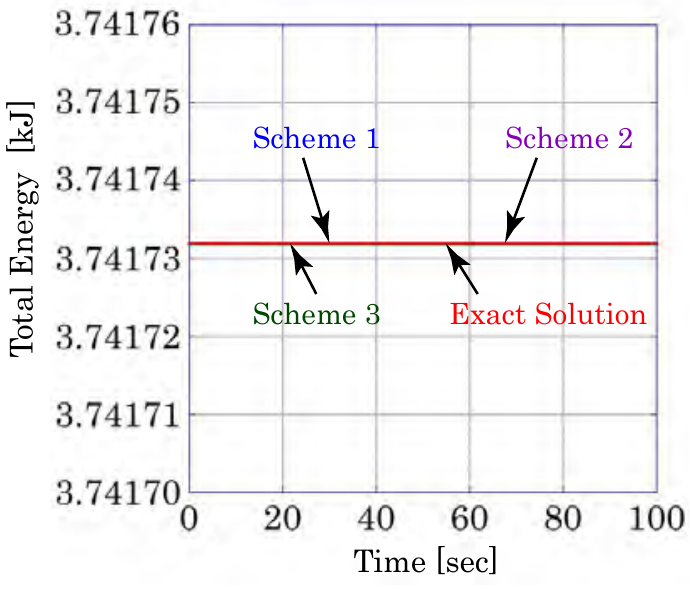}
%          \hspace{1.6cm} {\small (a)\;Position}
        \end{center}
      \end{minipage}
      \qquad
      % 2
      \begin{minipage}{0.4\hsize}
        \vspace{-0.1cm}\begin{center}
          \includegraphics[clip, width=5.5cm]{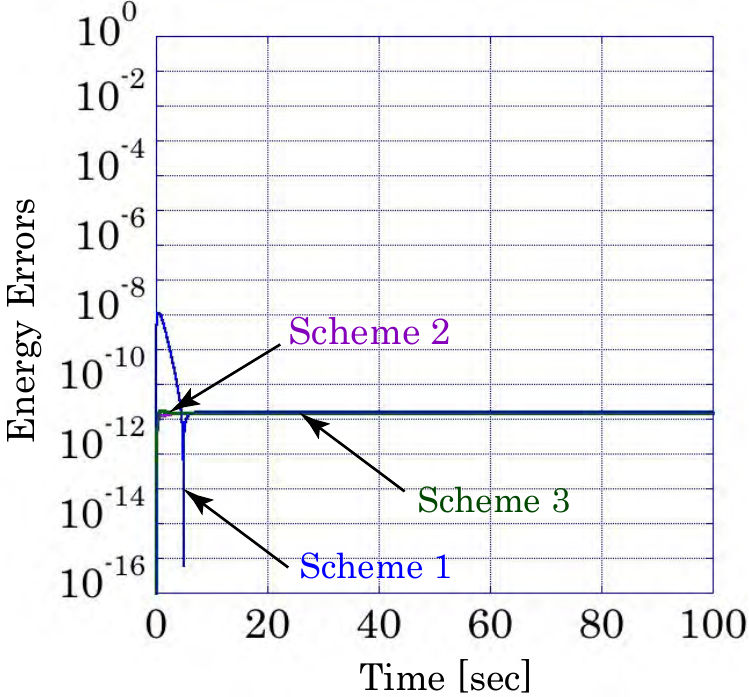}
%          \hspace{1.8cm} {\small  (b)\; Entropy}
        \end{center}
      \end{minipage}
    \end{tabular}
   \caption{Total energy and relative energy error (Case 1: $\lambda=10$)}
    \label{graph_energy_errors_Case1Lamda10}
  \end{center}
\end{figure}

 %%%%%%%%%%%%%%%%%%%%%%%%%%%%%%%%%%%%%%%%%%%%%%%%%%%%%%
\begin{figure}[htbp]
  \begin{center}
    \begin{tabular}{c}

      % 1
      \begin{minipage}{0.4\hsize}
        \begin{center}
          \includegraphics[clip, width=5.6cm]{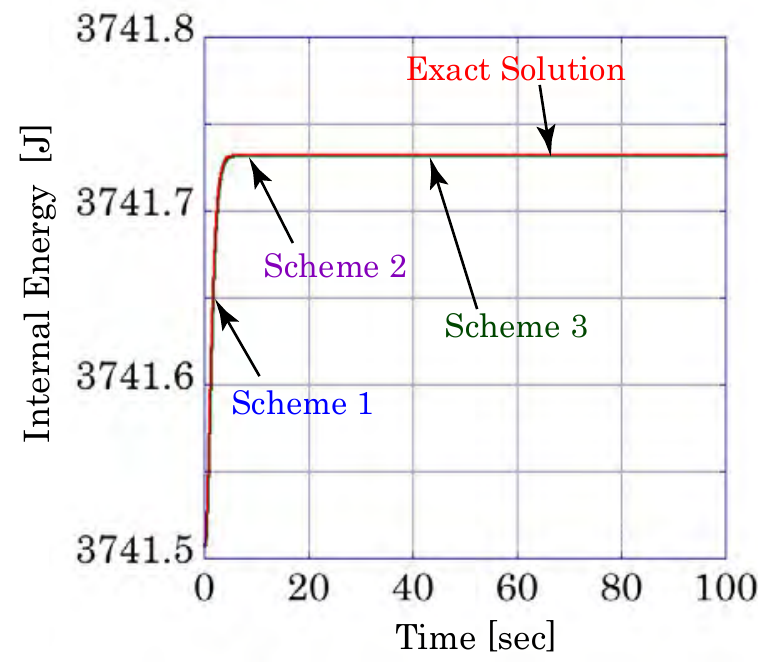}
%          \hspace{1.6cm} {\small (a)\;Position}
        \end{center}
      \end{minipage}
      \qquad
      % 2
      \begin{minipage}{0.4\hsize}
        \vspace{-0.8cm}\begin{center}
          \includegraphics[clip, width=6.3cm]{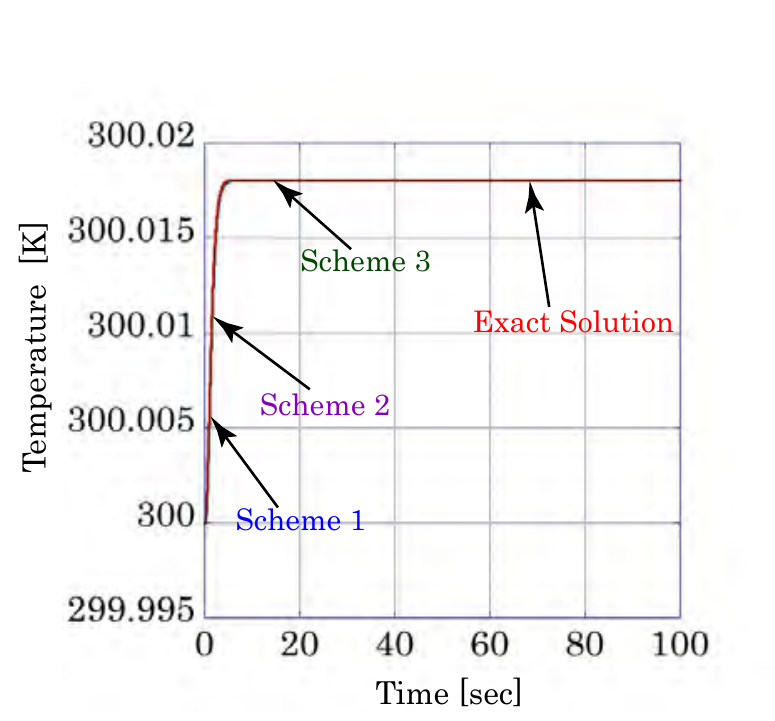}
%          \hspace{1.8cm} {\small  (b)\; Entropy}
        \end{center}
      \end{minipage}
    \end{tabular}
   \caption{Internal energy and temperature (Case 1: $\lambda=10$)}
    \label{graph_interenergy_temp_Case1Lamda10}
  \end{center}
\end{figure}

 %%%%%%%%%%%%%%%%%%%%%%%%%%%%%%%%%%%%%%%%%%%%%%%%%%%%%%

\newpage

\paragraph{Case 2.} For the second series of numerical tests, we choose the time step $h=10^{-3}{\rm [s]}$ and we set the parameters of the system $\boldsymbol{\Sigma}$ as follows: $m=10\,  {\rm[kg]}$, $N=2\,  {\rm [mol]}$,  $k= 20\, {\rm[N/m]}$, $V=9.9775 \times 10^{-2}\,  {\rm [m^{3}]}$. The initial conditions are $x_{0}=0.1\, {\rm [m]}$, $x_{1}=0.1\, {\rm [m]}$, $T_{0}=300\, {\rm [K]}$, $S_{0}=0\, {\rm [J/K]}$. \medskip

The results for $ \lambda =0$ are shown in Figures \ref{graph_position_entropy_Case2Lamda0} and \ref{graph_energy_errors_Case2Lamda0}, and, similarly to Case 1, consistently recover the behavior obtained through a usual variational discretization of the Euler-Lagrange equations. In particular, the internal energy $\mathcal{U}(S_{k})=\mathcal{U}_{0}$ is preserved and the temperature, given by $T_{k}=\frac{\partial \mathcal{U}}{\partial S}(S_{k})$,  remains a constant, see Figure \ref{graph_interenergy_temp_Case2Lamda0}.

For all the cases with friction, $ \lambda =0.2$, $5$, $10$, the numerical solutions of the position, entropy, internal energy, and temperature reproduce the correct behaviors for all the three schemes, as we see from a direct comparison with the exact solutions, see Figures \ref{graph_position_entropy_Case2Lamda0.2}, \ref{graph_interenergy_temp_Case2Lamda0.2}, \ref{graph_position_entropy_Case2Lamda5}, \ref{graph_interenergy_temp_Case2Lamda5}, \ref{graph_position_entropy_Case2Lamda10}, \ref{graph_interenergy_temp_Case2Lamda10}.

For Scheme 1, the relative energy error is bounded by $10^{-6}$ for all values of $ \lambda $, and decreases in time, whereas for Scheme 2 and 3, the relative energy error is bounded by $10^{-9}$ for all values of $ \lambda $, see Figures \ref{graph_energy_errors_Case2Lamda0.2}, \ref{graph_energy_errors_Case2Lamda5}, \ref{graph_energy_errors_Case2Lamda10}, and stays constant in time.

\begin{figure}[htbp]
  \begin{center}
    \begin{tabular}{c}
      % 1
      \begin{minipage}{0.4\hsize}
        \begin{center}
          \includegraphics[clip, width=5.6cm]{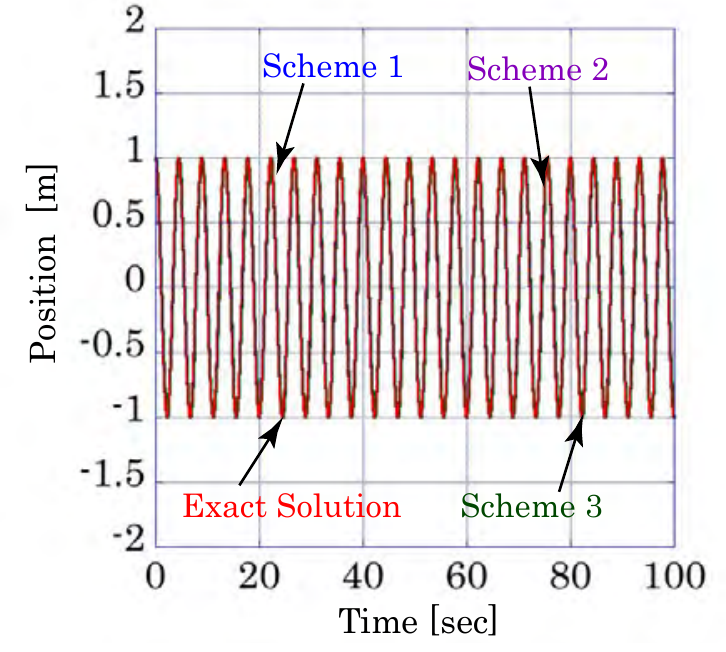}
%          \hspace{1.6cm} {\small (a)\;Position}
        \end{center}
      \end{minipage}
      \qquad
      % 2
      \begin{minipage}{0.4\hsize}
        \begin{center}
          \includegraphics[clip, width=5.8cm]{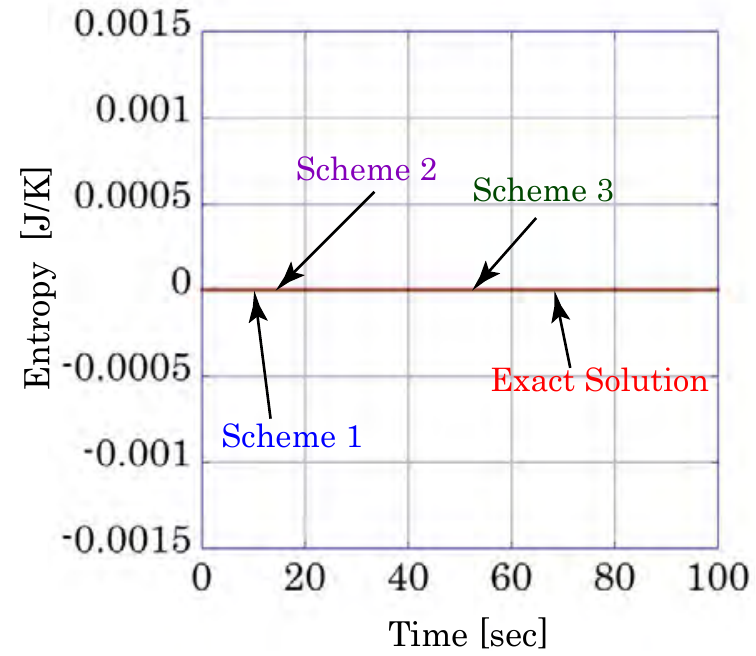}
%          \hspace{1.8cm} {\small  (b)\; Entropy}
        \end{center}
      \end{minipage}
    \end{tabular}
   \caption{Time evolutions of position and entropy (Case 2: $\lambda=0$)}
    \label{graph_position_entropy_Case2Lamda0}
  \end{center}
\end{figure}
%%%%%%%%%%%%%%%%%%%%%%%%%%%%%%%%%%%%%%%%%%%%%%%%%%%%%%
\begin{figure}[htbp]
  \vspace{-0.5cm}\begin{center}
    \begin{tabular}{c}
      % 1
      \begin{minipage}{0.4\hsize}
        \begin{center}
          \includegraphics[clip, width=6.0cm]{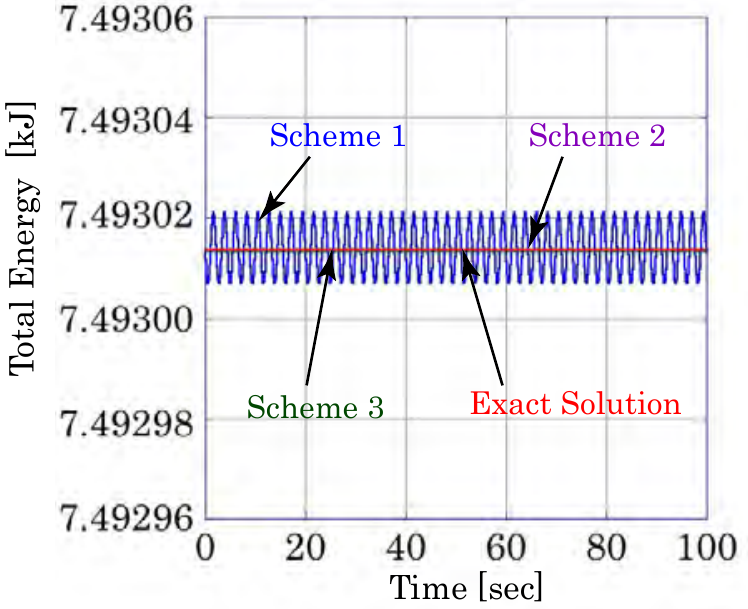}
%          \hspace{1.6cm} {\small (a)\;Position}
        \end{center}
      \end{minipage}
      \qquad
      % 2
      \begin{minipage}{0.4\hsize}
        \vspace{-0.1cm}\begin{center}
          \includegraphics[clip, width=5.6cm]{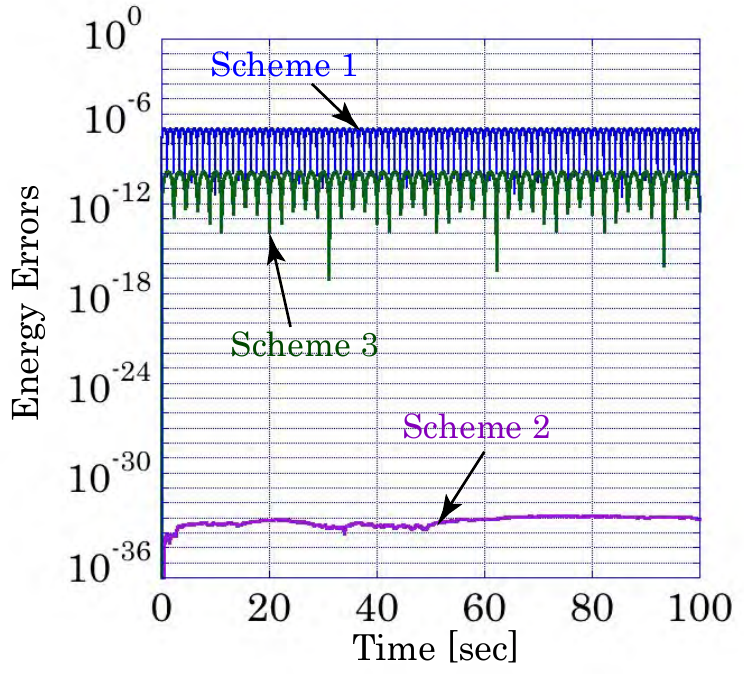}
%          \hspace{1.8cm} {\small  (b)\; Entropy}
        \end{center}
      \end{minipage}
    \end{tabular}
   \caption{Total energy and relative energy error (Case 2: $\lambda=0$)}
    \label{graph_energy_errors_Case2Lamda0}
  \end{center}
\end{figure}
 %%%%%%%%%%%%%%%%%%%%%%%%%%%%%%%%%%%%%%%%%%%%%%%%%%%%%%
\begin{figure}[htbp]
  \begin{center}
    \begin{tabular}{c}
      % 1
      \begin{minipage}{0.4\hsize}
        \begin{center}
          \includegraphics[clip, width=5.4cm]{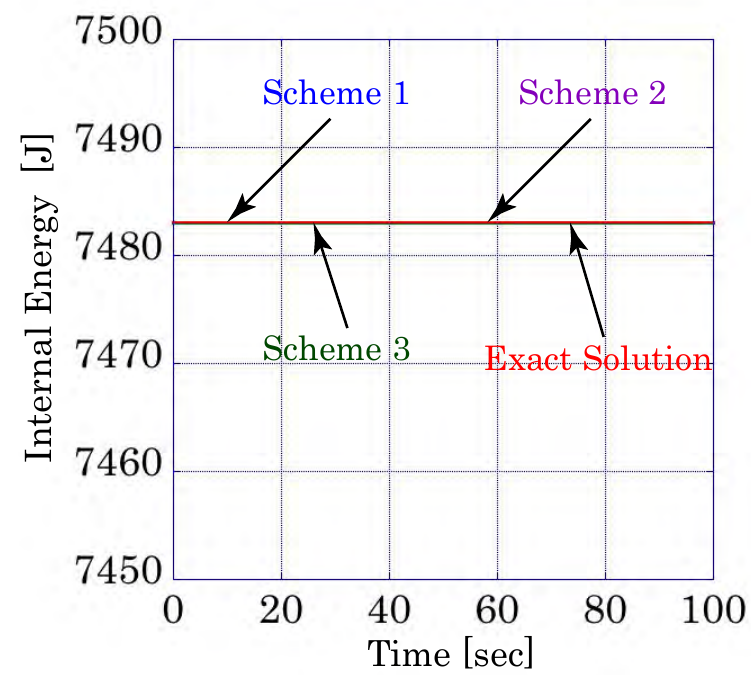}
%          \hspace{1.6cm} {\small (a)\;Position}
        \end{center}
      \end{minipage}
      \qquad
      % 2
      \begin{minipage}{0.4\hsize}
        \vspace{0.1cm}\begin{center}
          \includegraphics[clip, width=5.3cm]{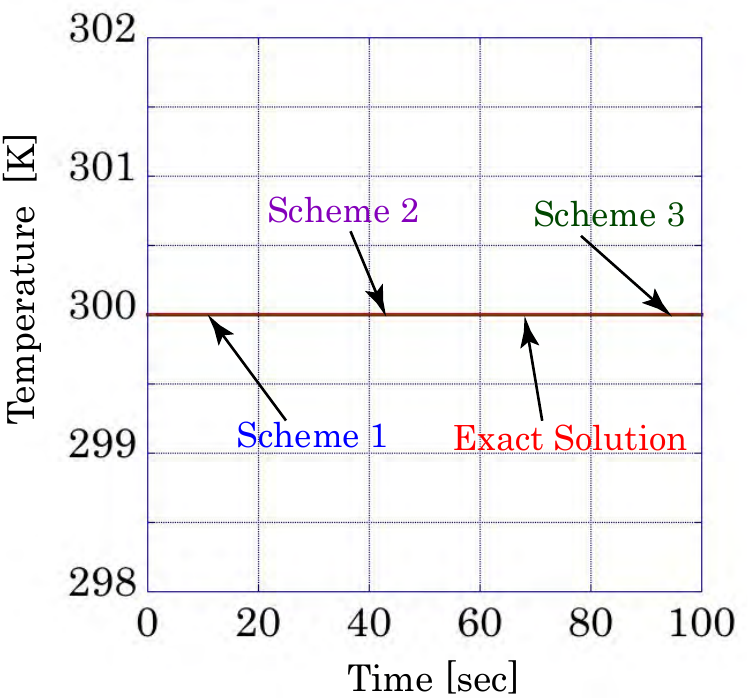}
%          \hspace{1.8cm} {\small  (b)\; Entropy}
        \end{center}
      \end{minipage}
    \end{tabular}
   \caption{Internal energy and temperature (Case 2: $\lambda=0$)}
    \label{graph_interenergy_temp_Case2Lamda0}
  \end{center}
\end{figure}
%%%%%%%%%%%%%%%%%%%%

%%%%%%%%%%%%%%%%%%%%%%%%%%%%%%%%%%%%%%%%%%%%%%%%%%%%%%
%                                   Case2: lambda=0.2
%%%%%%%%%%%%%%%%%%%%%%%%%%%%%%%%%%%%%%%%%%%%%%%%%%%%%%

%%%%%%%%%%%%%%%%%%%%%%%%%%%%%%%%%%%%%%%%%%%%%%%%%%%%%%
 \begin{figure}[htbp]
  \begin{center}
    \begin{tabular}{c}
% 1
      \begin{minipage}{0.4\hsize}
        \begin{center}
          \includegraphics[clip, width=5.6cm]{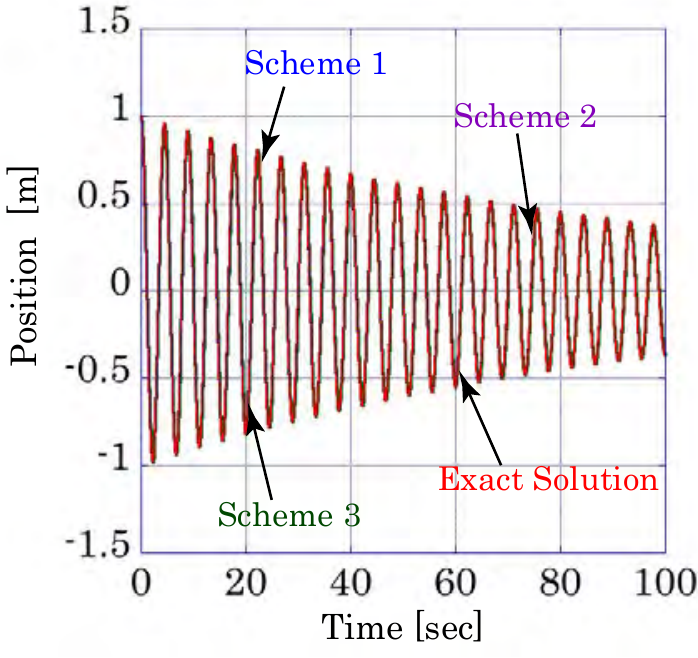}
%          \hspace{1.6cm} {\small (a)\;Position}
        \end{center}
      \end{minipage}
      \qquad
      % 2
      \begin{minipage}{0.4\hsize}
        \begin{center}
          \includegraphics[clip, width=5.9cm]{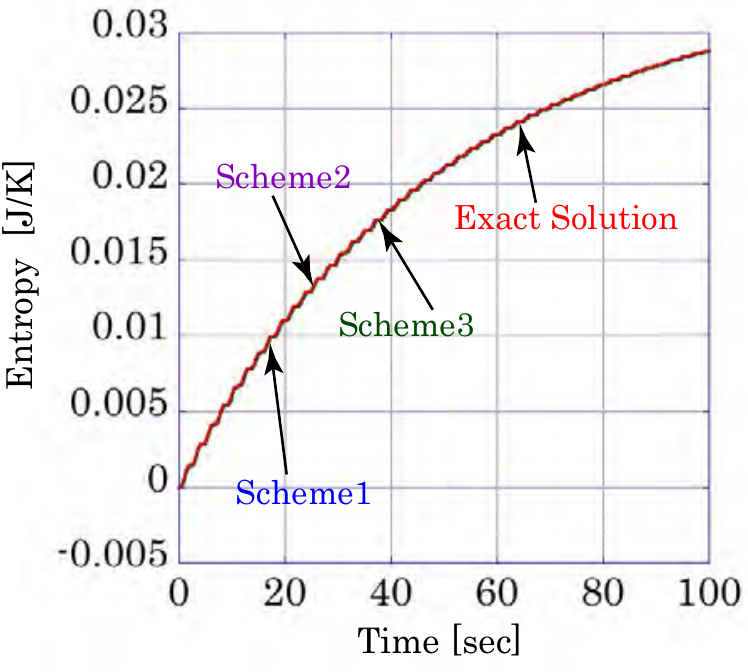}
%          \hspace{1.8cm} {\small  (b)\; Entropy}
        \end{center}
      \end{minipage}
    \end{tabular}
   \caption{Time evolutions of position and entropy (Case 2: $\lambda=0.2$)}
    \label{graph_position_entropy_Case2Lamda0.2}
  \end{center}
\end{figure}
%%%%%%%%%%%%%%%%%%%%%%%%%%%%%%%%%%%%%%%%%%%%%%%%%%%%%%
\begin{figure}[htbp]
  \begin{center}
    \begin{tabular}{c}
      % 1
      \begin{minipage}{0.4\hsize}
        \begin{center}
          \includegraphics[clip, width=6.5cm]{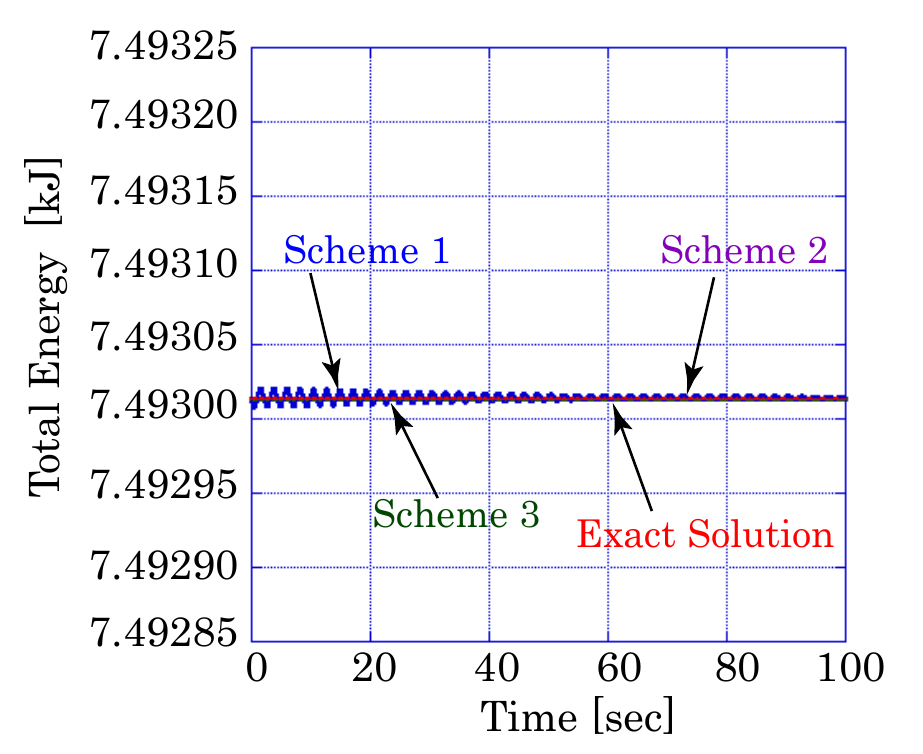}
%          \hspace{1.6cm} {\small (a)\;Position}
        \end{center}
      \end{minipage}
      \qquad \qquad 
      % 2
      \begin{minipage}{0.4\hsize}
        \begin{center}
          \includegraphics[clip, width=5.8cm]{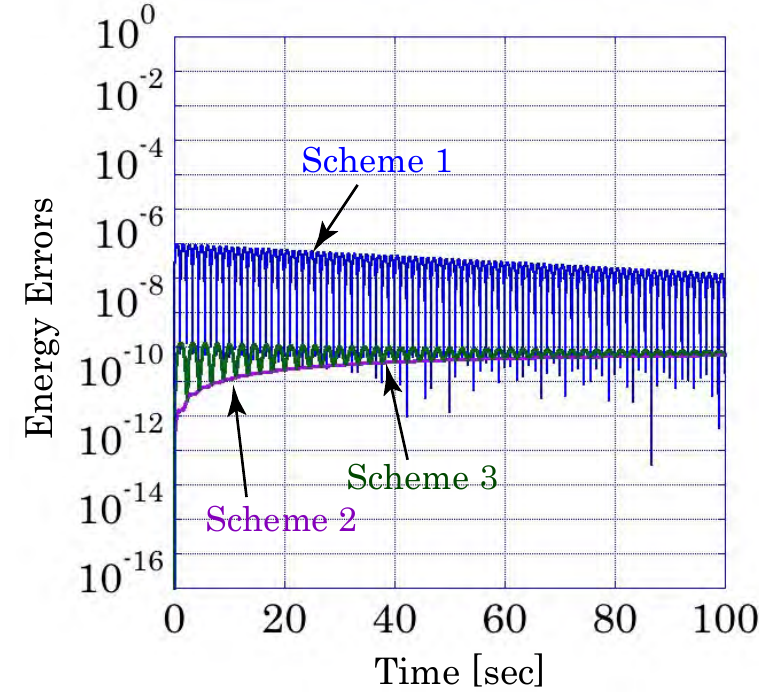}
%          \hspace{1.8cm} {\small  (b)\; Entropy}
        \end{center}
      \end{minipage}
    \end{tabular}
   \caption{Total energy and relative energy error (Case 2: $\lambda=0.2$)}
    \label{graph_energy_errors_Case2Lamda0.2}
  \end{center}
\end{figure}

 %%%%%%%%%%%%%%%%%%%%%%%%%%%%%%%%%%%%%%%%%%%%%%%%%%%%%%
\begin{figure}[htbp]
  \vspace{-0.5cm}\begin{center}
    \begin{tabular}{c}
      % 1
      \begin{minipage}{0.4\hsize}
        \begin{center}
          \includegraphics[clip, width=5.2cm]{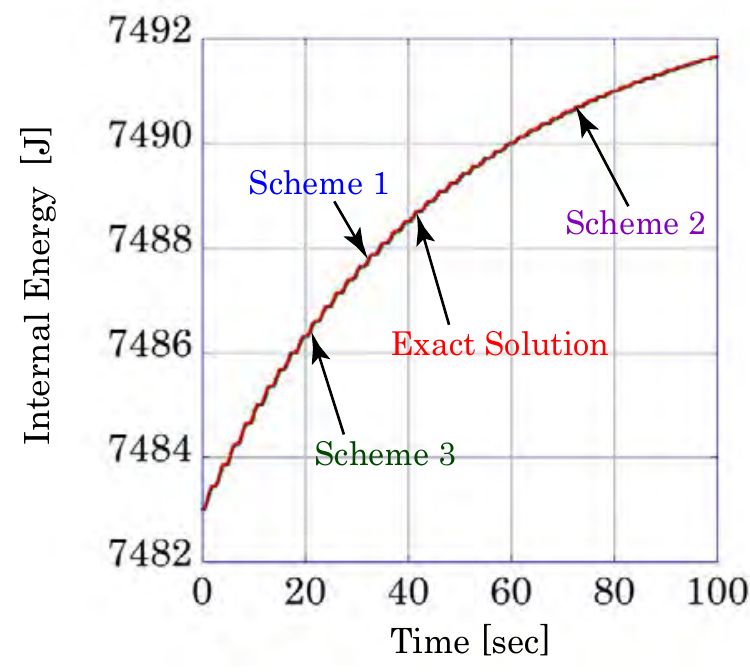}
%          \hspace{1.6cm} {\small (a)\;Position}
        \end{center}
      \end{minipage}
      \qquad
      % 2
      \begin{minipage}{0.4\hsize}
        \begin{center}
          \includegraphics[clip, width=5.2cm]{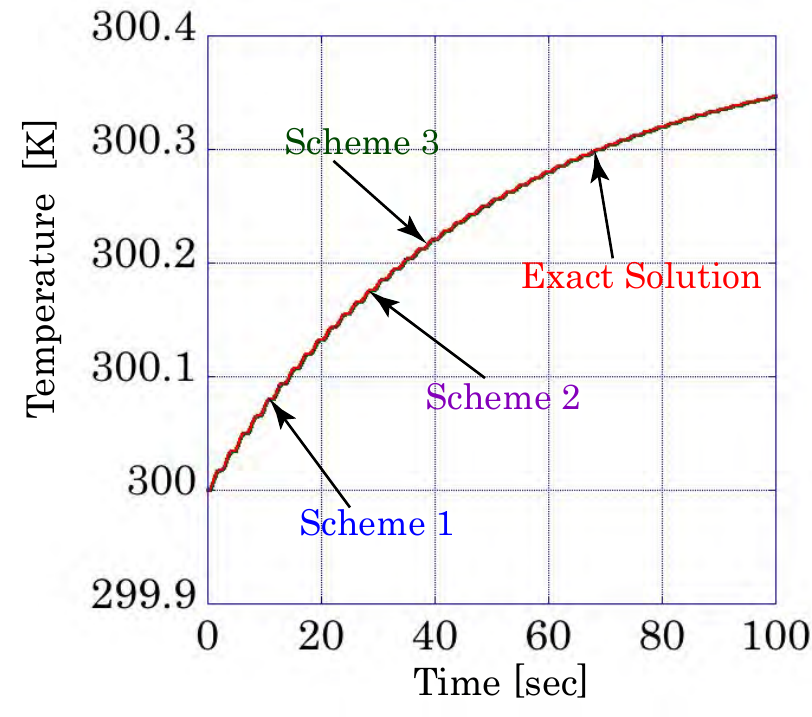}
%          \hspace{1.8cm} {\small  (b)\; Entropy}
        \end{center}
      \end{minipage}
    \end{tabular}
   \caption{Internal energy and temperature (Case 2: $\lambda=0.2$)}
    \label{graph_interenergy_temp_Case2Lamda0.2}
  \end{center}
\end{figure}
 \begin{figure}[htbp]
  \vspace{-2cm}\begin{center}
    \begin{tabular}{c}
      % 1
      \begin{minipage}{0.4\hsize}
        \begin{center}
          \includegraphics[clip, width=5.7cm]{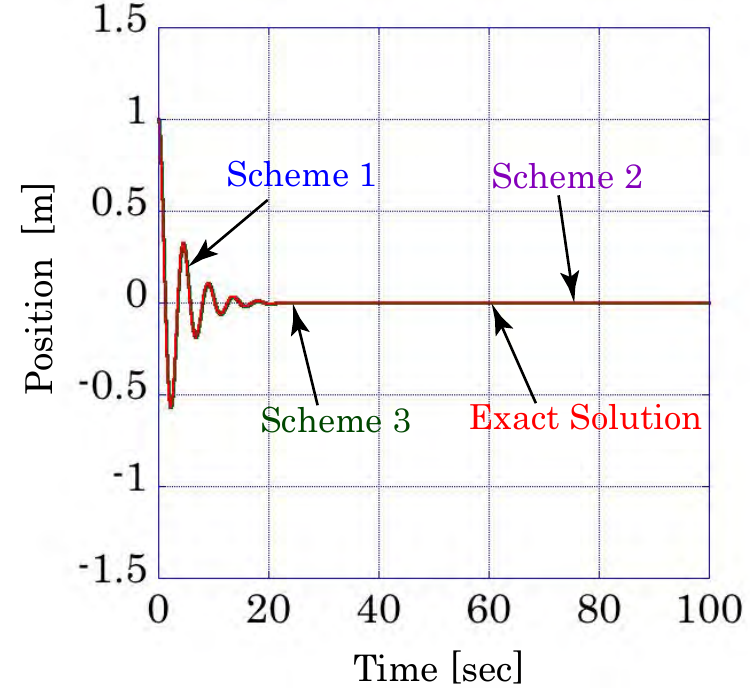}
%          \hspace{1.6cm} {\small (a)\;Position}
        \end{center}
      \end{minipage}
      \qquad
      % 2
      \begin{minipage}{0.4\hsize}
        \begin{center}
          \includegraphics[clip, width=5.7cm]{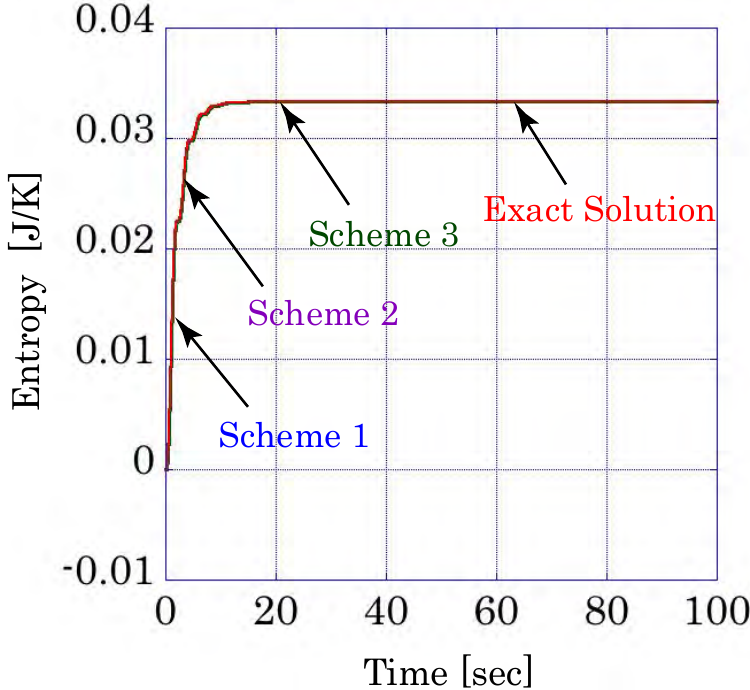}
%          \hspace{1.8cm} {\small  (b)\; Entropy}
        \end{center}
      \end{minipage}
    \end{tabular}
   \caption{Time evolutions of position and entropy (Case 2: $\lambda=5$)}
    \label{graph_position_entropy_Case2Lamda5}
  \end{center}
\end{figure}
%%%%%%%%%%%%%%%%%%%%%%%%%%%%%%%%%%%%%%%%%%%%%%%%%%%%%%

\begin{figure}[htbp]
  \vspace{-0.5cm}\begin{center}
    \begin{tabular}{c}
      % 1
      \begin{minipage}{0.4\hsize}
        \begin{center}
          \includegraphics[clip, width=6.0cm]{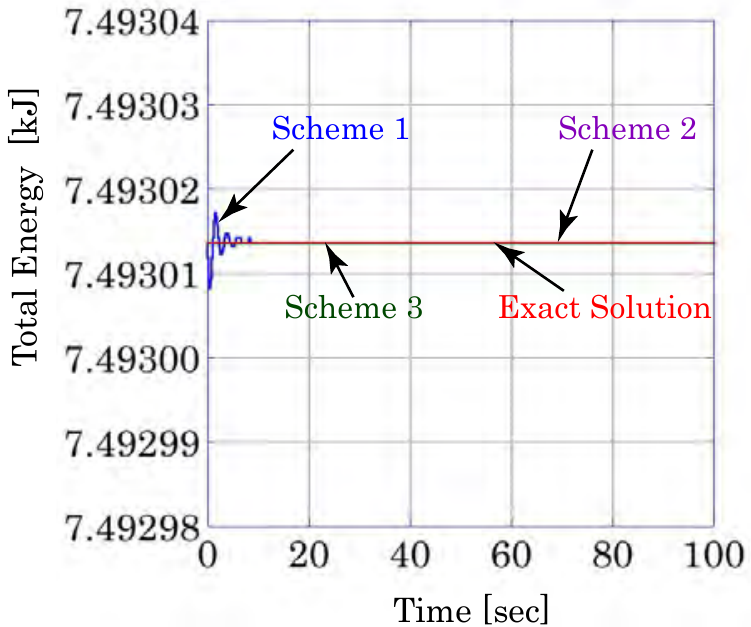}
%          \hspace{1.6cm} {\small (a)\;Position}
        \end{center}
      \end{minipage}
      \qquad
      % 2
      \begin{minipage}{0.4\hsize}
        \vspace{-0.15cm}\begin{center}
          \includegraphics[clip, width=5.6cm]{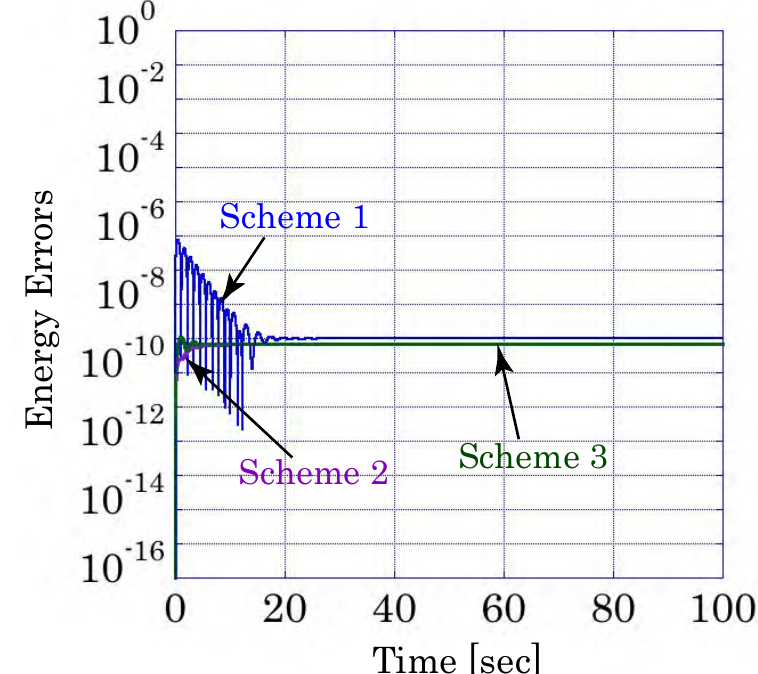}
%          \hspace{1.8cm} {\small  (b)\; Entropy}
        \end{center}
      \end{minipage}
    \end{tabular}
   \caption{Total energy and relative energy error (Case 2: $\lambda=5$)}
    \label{graph_energy_errors_Case2Lamda5}
  \end{center}
\end{figure}

 %%%%%%%%%%%%%%%%%%%%%%%%%%%%%%%%%%%%%%%%%%%%%%%%%%%%%%
\begin{figure}[htbp]
  \begin{center}
    \begin{tabular}{c}

      % 1
      \begin{minipage}{0.4\hsize}
        \begin{center}
          \includegraphics[clip, width=5.6cm]{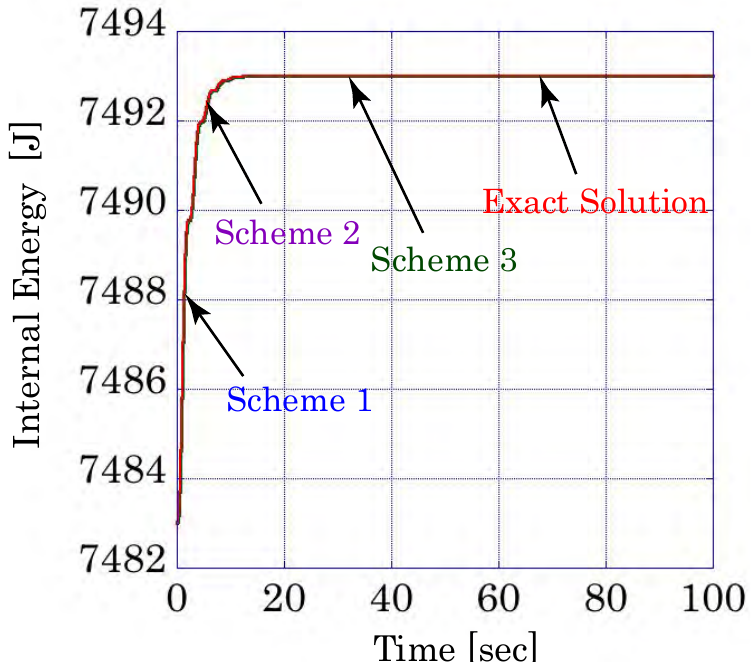}
%          \hspace{1.6cm} {\small (a)\;Position}
        \end{center}
      \end{minipage}
      \qquad
      % 2
      \begin{minipage}{0.4\hsize}
        \vspace{0.1cm}\begin{center}
          \includegraphics[clip, width=5.7cm]{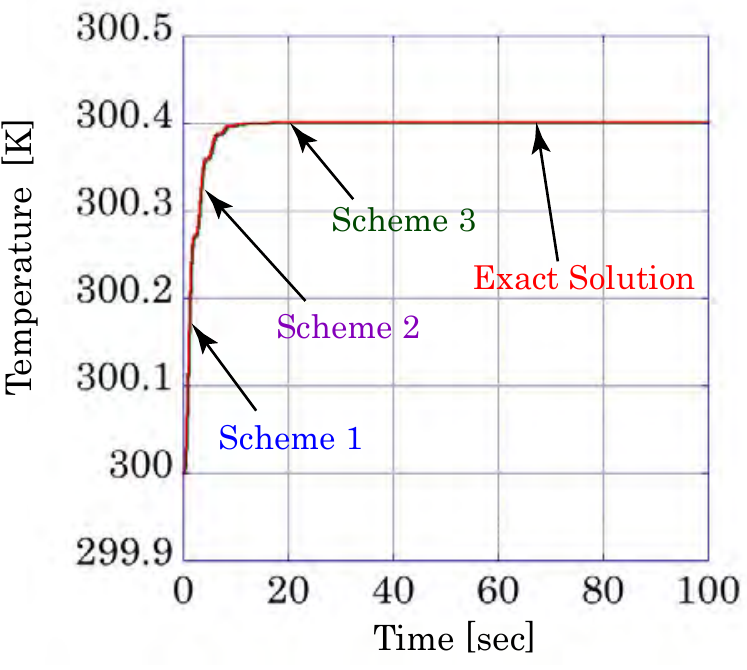}
%          \hspace{1.8cm} {\small  (b)\; Entropy}
        \end{center}
      \end{minipage}
    \end{tabular}
   \caption{Internal energy and temperature (Case 2: $\lambda=5$)}
    \label{graph_interenergy_temp_Case2Lamda5}
  \end{center}
\end{figure}

%%%%%%%%%%%%%%%%%%%%%%%%%%%%%%%%%%%%%%%%%%%%%%%%%%%%%%
%                                   Case2: lambda=10
%%%%%%%%%%%%%%%%%%%%%%%%%%%%%%%%%%%%%%%%%%%%%%%%%%%%%%

%%%%%%%%%%%%%%%%%%%%%%%%%%%%%%%%%%%%%%%%%%%%%%%%%%%%%%
 \begin{figure}[htbp]
  \begin{center}
    \begin{tabular}{c}
      % 1
      \begin{minipage}{0.4\hsize}
        \begin{center}
          \includegraphics[clip, width=5.6cm]{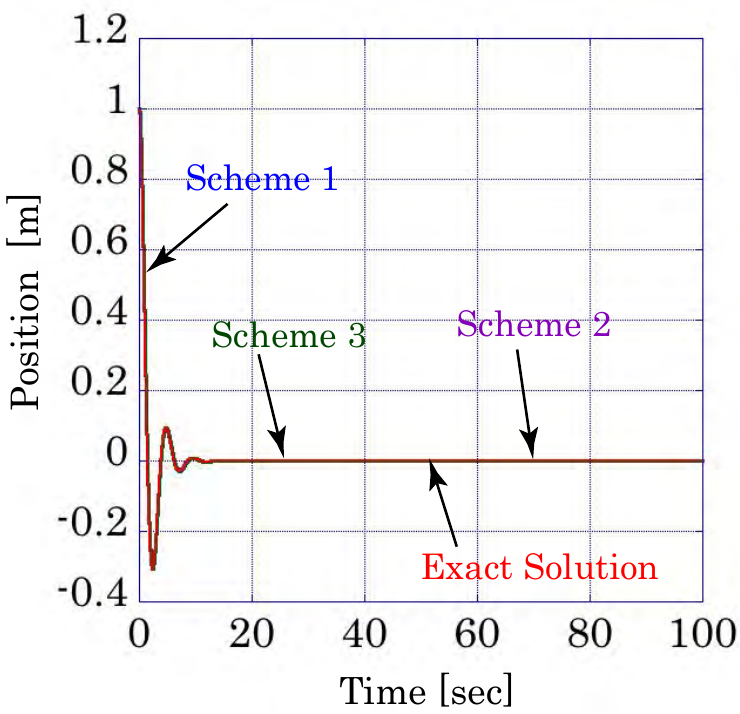}
%          \hspace{1.6cm} {\small (a)\;Position}
        \end{center}
      \end{minipage}
      \qquad
      % 2
      \begin{minipage}{0.4\hsize}
        \vspace{-0.15cm}\begin{center}
          \includegraphics[clip, width=5.8cm]{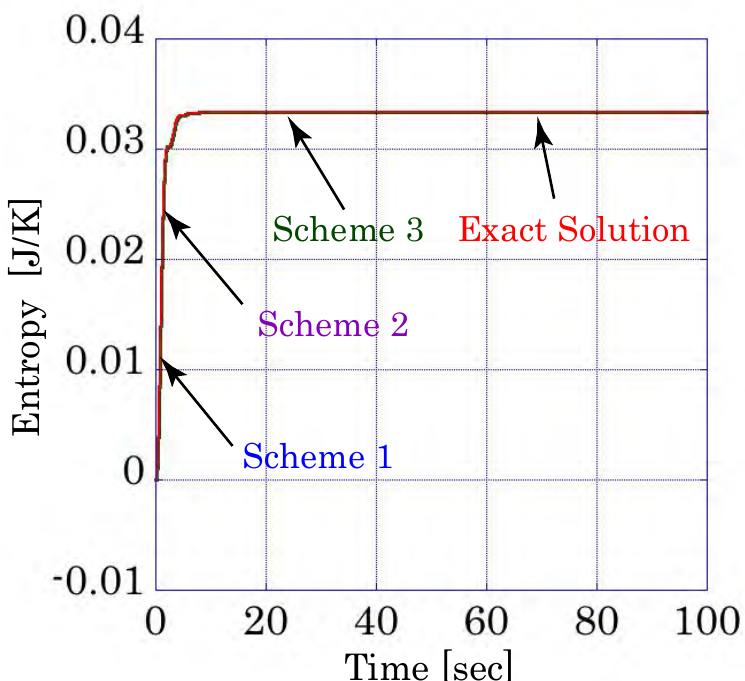}
%          \hspace{1.8cm} {\small  (b)\; Entropy}
        \end{center}
      \end{minipage}
    \end{tabular}
   \caption{Time evolutions of position and entropy (Case 2: $\lambda=10$)}
    \label{graph_position_entropy_Case2Lamda10}
  \end{center}
\end{figure}
%%%%%%%%%%%%%%%%%%%%%%%%%%%%%%%%%%%%%%%%%%%%%%%%%%%%%%
\begin{figure}[htbp]
  \begin{center}
    \begin{tabular}{c}
      % 1
      \begin{minipage}{0.4\hsize}
        \begin{center}
          \includegraphics[clip, width=6.0cm]{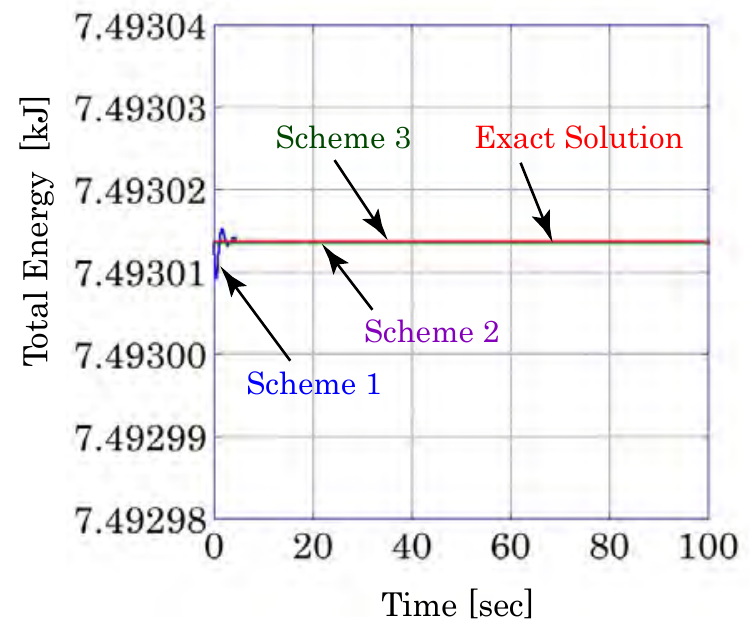}
%          \hspace{1.6cm} {\small (a)\;Position}
        \end{center}
      \end{minipage}
      \qquad
      % 2
      \begin{minipage}{0.4\hsize}
        \vspace{-0.25cm}\begin{center}
          \includegraphics[clip, width=5.3cm]{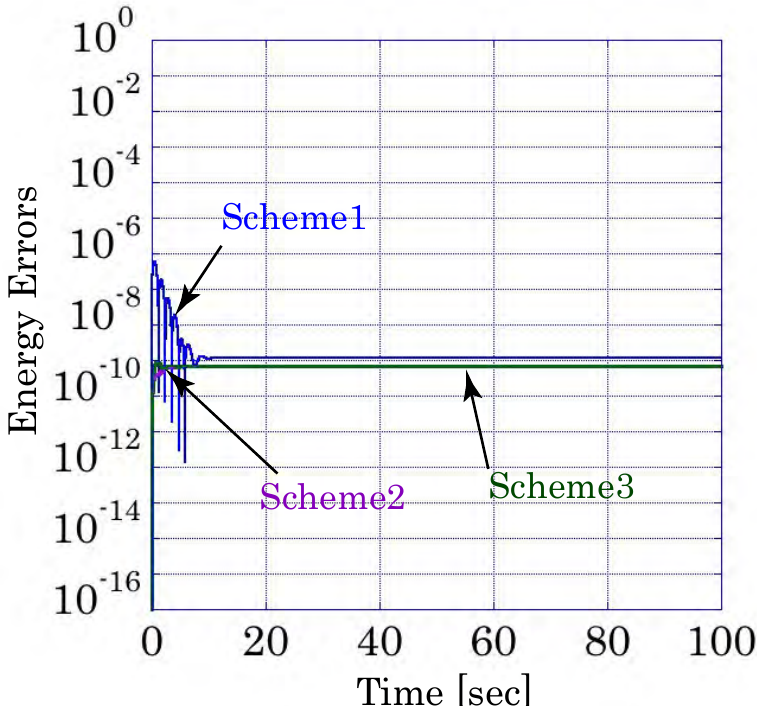}
%          \hspace{1.8cm} {\small  (b)\; Entropy}
        \end{center}
      \end{minipage}
    \end{tabular}
   \caption{Total energy and relative energy error (Case 2: $\lambda=10$)}
    \label{graph_energy_errors_Case2Lamda10}
  \end{center}
\end{figure}

 %%%%%%%%%%%%%%%%%%%%%%%%%%%%%%%%%%%%%%%%%%%%%%%%%%%%%%
\begin{figure}[htbp]
  \begin{center}
    \begin{tabular}{c}
      % 1
      \begin{minipage}{0.4\hsize}
        \begin{center}
          \includegraphics[clip, width=5.6cm]{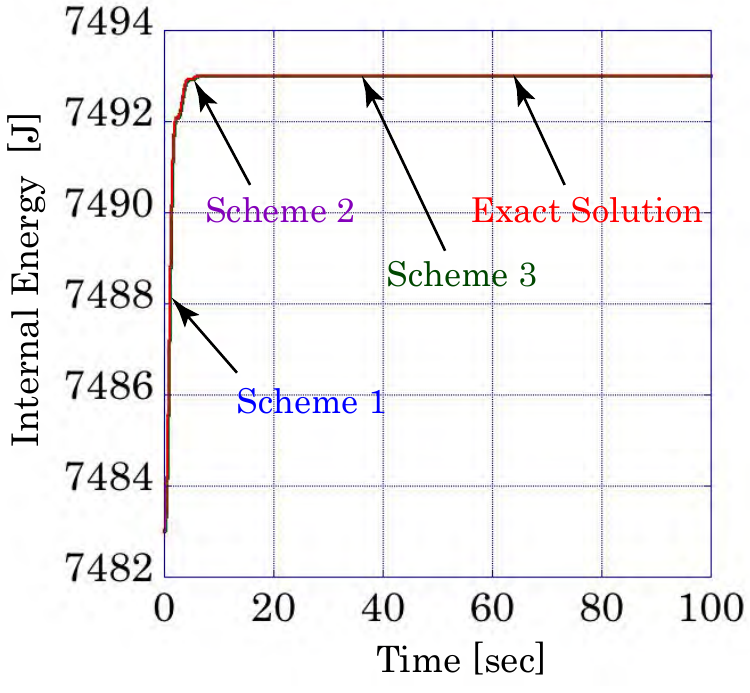}
%          \hspace{1.6cm} {\small (a)\;Position}
        \end{center}
      \end{minipage}
      \qquad
      % 2
      \begin{minipage}{0.4\hsize}
        \vspace{-0.1cm}\begin{center}
          \includegraphics[clip, width=5.9cm]{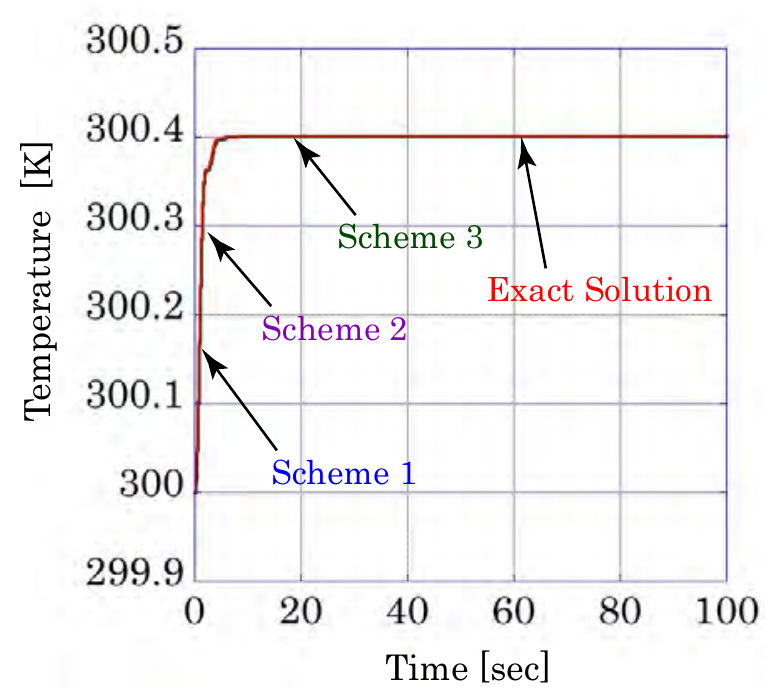}
%          \hspace{1.8cm} {\small  (b)\; Entropy}
        \end{center}
      \end{minipage}
    \end{tabular}
   \caption{Internal energy and temperature (Case 2: $\lambda=10$)}
    \label{graph_interenergy_temp_Case2Lamda10}
  \end{center}
\end{figure}

 %%%%%%%%%%%%%%%%%%%%%%%%%%%%%%%%%%%%%%%%%%%%%%%%%%%%%%

\newpage

For this particular example of the mass-spring-friction system with thermodynamics, we have observed an excellent total energy behavior for all the three schemes, i.e., changes of mechanical energy are compensated by changes of internal energy during the evolution, exactly as in the continuous case. 
A thorough study of the energy behaviors of the numerical schemes derived from our variational discretization has to be explored in order to analyze to what class of simple thermodynamical systems does this property extend.

It is important to mention that in general, a variational discretization of the Lagrange-d'Alembert type in mechanics does not necessarily produce a scheme with a well accurate energy behavior.
We refer, e.g., to \cite{MLPe2006, Celetal2016} for some examples of such schemes in nonholonomic mechanics which are derived from a discrete Lagrange-d'Alembert principle and which present an energy drift.

\color{black} 

\paragraph{Acknowledgements.} The authors thank C. Gruber for extremely helpful discussions and also graduate students, T. Nishiyama and H. Momose, for their supporting in numerical computations. F.G.B. is partially supported by the ANR project GEOMFLUID, ANR-14-CE23-0002-01; H.Y. is partially supported by JSPS Grant-in-Aid for Scientific Research (26400408, 16KT0024), Waseda University (SR 2014B-162, SR 2015B-183), and the MEXT ``Top Global University Project''.

\end{document}